\documentclass[a4paper,11pt]{amsart}
 \usepackage{amssymb,amsfonts,latexsym}
 \usepackage{graphics,verbatim}
 \usepackage{graphicx}
 \usepackage{comment}
 \usepackage{upref}
 \usepackage{hyperref}
 \usepackage{xcolor}
 \usepackage{amsthm}
 \usepackage{enumitem}
 \usepackage{appendix}
 
 \newtheorem{theorem}{Theorem}[section]
 \newtheorem{proposition}{Proposition}[section]
 \newtheorem{lemma}{Lemma}[section]
 \newtheorem{corollary}{Corollary}[section]
 \newtheorem{remark}{Remark}[section]
 \newtheorem{definition}{Definition}[section]

 \renewcommand{\(}{\left(}
 \renewcommand{\)}{\right)}

 \newcommand{\eps}{\varepsilon}
 \newcommand{\dvg}{\mathrm{~d}V_{\mathbb B^N}}
 
 \setlength{\textheight}{22cm}
 \setlength{\textwidth}{15.1cm}
 \oddsidemargin0.15in \evensidemargin0.15in

 
 \newcommand{\grad}{\ensuremath{\nabla}}
 
 \newcommand{\N}{\ensuremath{\mathbb{N}}}
 \newcommand{\R}{\ensuremath{\mathbb{R}}}

 \newcommand{\B}{\mathbb{B}}

 \newcommand{\abs}[1]{\left\vert#1\right\vert}

 \newcommand{\norm}[1]{\left\Vert#1\right\Vert}

 \newcommand{\be} {\begin{equation}}
 \newcommand{\ee} {\end{equation}}
 \newcommand{\bea} {\begin{eqnarray}}
 \newcommand{\eea} {\end{eqnarray}}
 \newcommand{\Bea} {\begin{eqnarray*}}
 	\newcommand{\Eea} {\end{eqnarray*}}

 \newcommand{\ba} {\beta}
 \newcommand{\de} {\delta}

 \newcommand{\Om} {\Omega}
 
 \newcommand{\De} {\Delta}
 \newcommand{\la} {\lambda}

 \newcommand{\nequiv} {\not\equiv}

 \newcommand{\no} {\nonumber}
 \newcommand{\noi} {\noindent}
 \newcommand{\I}{\infty}

 \newcommand{\f}{\frac}

 \newcommand{\ef}{\eqref}
 \newcommand{\nee}{\notag\ee}
  \newcommand{\bn}{\mathbb B^N}
   \newcommand{\Rn}{\mathbb R^N}
  \newcommand{\cn}{c(N,\la)}
   \newcommand{\na}{\nabla}

 \catcode`\@=11
 
 \makeatletter \@addtoreset{equation}{section} \makeatother

 \begin{document}

 	\title[Multiplicity of Positive Solutions]
	{Global compactness result and multiplicity of solutions for a class of critical exponent problem in the hyperbolic space}
 	
 \author[Bhakta]{Mousomi Bhakta}
\address{M. Bhakta, Department of Mathematics\\
Indian Institute of Science Education and Research Pune (IISER-Pune)\\
Dr Homi Bhabha Road, Pune-411008, India}
\email{mousomi@iiserpune.ac.in}
\author[Ganguly]{Debdip Ganguly}
\address{D. Ganguly, Department of Mathematics\\
Indian Institute of Technology Delhi\\
IIT Campus, Hauz Khas, New Delhi, Delhi 110016, India}
\email{debdipmath@gmail.com, debdip@maths.iitd.ac.in}	
\author[Gupta]{Diksha Gupta}
\address{D. Gupta, Department of Mathematics\\
Indian Institute of Technology Delhi\\
IIT Campus, Hauz Khas, New Delhi, Delhi 110016, India}
\email{maz208233@maths.iitd.ac.in}
\author[Sahoo]{Alok Kumar Sahoo}
 \address{A.K.Sahoo, Department of Mathematics\\
Indian Institute of Science Education and Research Pune (IISER-Pune)\\
Dr Homi Bhabha Road, Pune-411008, India}
\email{alok.sahoo@acads.iiserpune.ac.in}

 	
 	\keywords{Hyperbolic space, critical exponent, profile decomposition, energy estimates, interaction between bubbles, hyperbolic bubble, localized Aubin-Talenti bubble, multiplicity.}
 	
 	\date{}
 	
 	\begin{abstract}
 	This paper deals with global compactness and the multiplicity of positive solutions to problems of the type	 
	$$
	 -\De_{\mathbb B^N} u -\lambda u=a(x) |u|^{2^*-2}u+f(x) \quad\text{in } \mathbb B^N, \quad u\in H^1(\mathbb B^N),
	$$
	where $\mathbb B^N$ denotes the ball model of the hyperbolic space of dimension $N\geq 4$, $2^*=\frac{2N}{N-2}$,  $\frac{N(N-2)}{4}<\lambda<\f{(N-1)^2}{4}$ and $f\in H^{-1}(\mathbb B^N)$ ($f\not\equiv 0$) is a non-negative functional in the dual space of $H^1(\bn)$. The potential $a\in L^\I(\bn)$ is assumed to be strictly positive, such that $\lim_{ d(x,0)\to \I}a(x)=1$, where  $d(x,0)$ denotes the geodesic distance. We establish profile decomposition of the functional associated with the above equation. We show that concentration occurs along two different profiles, namely, hyperbolic bubbles and localized Aubin-Talenti bubbles. For $f=0$ and $a\equiv 1$, profile decomposition was studied in \cite{BS}. However, due to the presence of the potential $a(.)$, an extension of profile decomposition to the present set-up is highly nontrivial and requires several delicate estimates and geometric arguments concerning the isometry group (M\"obius group) of the hyperbolic space. Further, using the decomposition result, we derive various energy estimates involving the interacting hyperbolic bubbles and hyperbolic bubbles with localized Aubin-Talenti bubbles. Finally, combining these estimates with topological and variational arguments, we establish a multiplicity of positive solutions in the cases: $a\geq 1$ and $a<1$ separately.  The equation studied in this article can be thought of as a variant of a scalar-field equation with a critical exponent in the hyperbolic space, although such a critical exponent problem in the Euclidean space $\mathbb{R}^N$ has only a trivial solution when $f \equiv 0,$  $a(x)\equiv1$ and $\lambda < 0.$
	
\medskip

\noindent
\emph{\bf 2020 MSC:} 35B33, 35J20, 35B38, 35R01   	
	\end{abstract}

 	\maketitle
 	\tableofcontents

 	\section{Introduction}
 	In this article, we
 	consider a nonhomogeneous elliptic equation with critical exponent and potential on the ball model of the hyperbolic space $\mathbb B^N$, namely
 	\begin{equation}
 	\tag{$P_{\lambda,a,f}$}\label{Paf}
 	\left\{\begin{aligned}
 	 	 -\De_{\mathbb B^N} u -\lambda u&=a(x) \abs{u}^{2^*-2}u+f(x) \quad\text{in } \mathbb B^N,\\
 	u>0 \text{ in } \mathbb B^N, &\,\,\quad u\in H^1(\mathbb B^N),
 	\end{aligned}
 	\right.
 	\end{equation}
 	where $N\geq 4$, $2^*:=\f{2N}{N-2}$ and $\frac{N(N-2)}{4}<\lambda<\f{(N-1)^2}{4}$. $H^1(\bn)$ denotes the Sobolev space on the ball model of the hyperbolic space $\bn$, $\Delta_{\bn}$ denotes the
Laplace-Beltrami operator on $\bn$, $\f{(N-1)^2}{4}$  being the bottom of the $L^2$ spectrum of $-\De_{\bn}$. Throughout this paper, we assume that $0\not\equiv f\in H^{-1}(\mathbb B^N)$, the dual space of $H^1(\bn)$, is a non-negative functional, and potential $a$ satisfies the following hypothesis:
	 \begin{equation}
 	\tag{\textbf{A}}\label{assum_A}
 	\left\{\begin{aligned}& 0<a(x)\in C^1(\mathbb B^N), \\
	&\lim_{ d(x,0)\to \I}a(x)=1,\end{aligned}
 	\right.
 	\end{equation} 
  where $d(x,0)$ is the geodesic distance of $x$ from $0$ in the hyperbolic space $\bn$.  The Euclidean unit ball $B^N:=\{x\in\Rn:|x|<1\}$ endowed with the metric 
  $g_{\bn}:= \left(\frac{2}{1-|x|^2}\right)^2\;g_{Eucl}$ represents the ball model for the hyperbolic N-space where $g_{Eucl}$ is the Euclidean metric. The corresponding volume element is given by $\dvg=(\frac{2}{1-|x|^2})^N\mathrm{~d} x$, where $\mathrm{~d} x$ is the Lebesgue measure on $\Rn$.  $\na_{\bn}$ denotes the gradient vector field. Then the Sobolev inequality or rather Poincar\'e-Sobolev inequality obtained by Mancini-Sandeep \cite{MS} takes the following form:
for $N \geq 3$ and $\lambda \leq \frac{(N-1)^2}{4}$ and $1 < p \leq 2^* -1$,  there exists a best constant $\mathcal{S}_{\la,p}:=\mathcal{S}_{\lambda,p}(\bn)>0$ such that 
\begin{align}\label{S-inq}
\mathcal{S}_{\lambda,p}\left(~\int \limits_{\bn}|u|^{p+1} \dvg \right)^{\frac{2}{p+1}}\leq\int \limits_{\bn}\left(|\nabla_{\bn}u|^{2}-\lambda u^{2}\right)\dvg
\end{align}
holds for all  $u\in C_c^{\infty}(\bn)$. 

Before we proceed further, some comments about \eqref{S-inq} are in order:

\begin{itemize}
\item[(i)] Here $|\nabla_{\B^{N}}u|^{2}$ is to be understood concerning the inner product on the tangent space induced by the metric $g_{\B^{N}}$ (we refer to Section \ref{pre} for an appropriate definition).

\medskip

\item[(ii)] By density, \eqref{S-inq} continues  to hold for every $u \in H^1(\bn)$ defined by the closure of $C_c^{\infty}(\B^N)$ with respect to the norm $\|\nabla_{\bn}u\|_{2} + \|u\|_{2},$ where
$\|\cdot\|_{r}$ denotes the $L^r$-norm with respect to the volume measure for $1\leq r\leq\infty.$

\medskip 
\item[(iii)]   $\|u\|_{\lambda}$ defined by 
\begin{align} \label{lambda norm}
\|u\|_{\lambda}^2 : = \int \limits_{\B^{N}}\left(|\nabla_{\B^{n}}u|^{2}-\lambda u^{2}\right) \dvg~\hbox{ for all } u\in H^1(\B^{N}),
\end{align}
is an equivalent norm w.r.t. the standard norm in $H^1(\mathbb{B}^N)$ for $\lambda < \frac{(N-1)^2}{4}.$ 
\end{itemize}

We denote $S_{\lambda}:=S_{\lambda, 2^*-1}(\bn)$ and  $S:=S_{\frac{N(N-2)}{4},2^*-1}(\bn)$. It is well known that $S$ is the best Euclidean Sobolev constant, i.e.,

\begin{equation} 
 S \Big(\int_{ \mathbb R^N}|u|^{2^*}\mathrm{~d} x\Big)^\frac{2}{2^*}\leq \int_{ \mathbb R^N} |\nabla_{\mathbb R^N}u|^2 \mathrm{~d} x,\label{sob_euc}
 \end{equation}
	for all $u\in C_c^\I(\mathbb{R}^N)$, and $S_{\lambda}<S$ for $\frac{N(N-2)}{4}<\lambda < \frac{(N-1)^2}{4}$ (See \cite[Proposition 3.2]{BGKM}).
Moreover, by density argument, \eqref{sob_euc} holds for all $u$ such that $\|\nabla_{\mathbb R^N}u\|_2 < \infty.$ We will now refer to the closure as $D^{1,2}(\mathbb{R}^N).$
 \medskip 
  
Under the stated assumptions \eqref{assum_A}, Eq. \eqref{Paf} can be considered as a perturbation problem of the homogeneous
equation:
  \begin{equation}
 	\tag{$P_{\lambda,1,0}$}\label{E}
 	\left\{\begin{aligned}
 	 	 -\De_{\mathbb B^N} u -\lambda u&=\abs{u}^{2^*-2}u \quad\text{in } \mathbb B^N,\\
 	u>0 \text{ in } \mathbb B^N &\,\,\,\text{ and } u\in H^1(\mathbb B^N).
 	\end{aligned}
 	\right.
 	\end{equation}
In the celebrated paper, \cite{MS} Mancini and Sandeep
proved that \eqref{E} has a unique weak solution $\mathcal{U}:=\mathcal{U}_{N,\la}$ (up to hyperbolic isometries), which is also an extremal of the Poincar\'e-Sobolev inequality \eqref{S-inq}. By elliptic regularity, any solution of \eqref{E} is also $C^\infty$. In \cite{MS}, authors have also shown that any solution of \eqref{E} is radially symmetric, symmetric-decreasing  and satisfies the following decay property:  for every $\varepsilon > 0,$ there exist positive constants $C_1^{\varepsilon}$ and $C_2^{\varepsilon},$ depending on $\varepsilon,$ such that 
  \begin{equation*}
	C_1^{\varepsilon}\;e^{-(\cn + \varepsilon) \, d(x,0)} \leq \mathcal{U}(x) \leq C_2^{\varepsilon}\; e^{-(\cn - \varepsilon) \, d(x,0)}, \ \ \ \mbox{for all} \ \ x \in \bn,
\end{equation*}
where $\cn=\frac{N-1+\sqrt{(N-1)^2-4\la}}{2}$ and $d(x,0)$ is the hyperbolic distance. Later, Bandle and Kabeya in \cite[Lemma~2.3]{CYK} improved the above result and showed that the estimate holds for  $\varepsilon =0,$ i.e.,  there exist constants $\chi_i := \chi_i(N,\la),\,  i = 1,2$  such that 
\begin{align} \label{bubble decay}
\chi_1e^{-\cn d(x,0)} \leq \mathcal{U}(x) \leq \chi_2 e^{- \cn d(x,0)}, \ \ \ \mbox{for all} \ \ x \in \bn.
\end{align}
For simplicity of notations, we will always denote the radially symmetric
solution by $\mathcal{U}$ and the dependence of $N,\la$ will be implicitly assumed. 

Concerning the multiplicity of \eqref{E}, the multiplicity of sign-changing solutions, compactness, and non-degeneracy were studied in \cite{BS, GS1, GS2}. The main question addressed in this paper is whether positive solutions can still survive for the perturbed Eq. \eqref{Paf}. For the subcritical problem, this question has been recently studied by the second and third authors of this paper jointly with Sreenadh \cite{GGS} (also see \cite{GGS2} for the homogeneous counterpart). To the best of our knowledge, so far, there have been no papers in the literature where the existence and multiplicity of positive solutions of equations with critical exponents in $\bn$ have been established in the nonhomogeneous case $f\not \equiv 0$. In a forthcoming article \cite{BGGS1}, we plan to address the question of the existence and multiplicity of solutions for the corresponding homogeneous problem.

 Now let us describe all the necessary assumptions before stating our main theorems. We investigate the solutions of \eqref{Paf} under the following cases separately:
 	 \begin{align}
 	 	&\tag{\textbf{A1}}\qquad\qquad a(x)\geq 1 \quad\forall \,\,\,\,x\in \mathbb B^N. \label{A1}
    \end{align} 
    \begin{align}
 	 	&\tag{\textbf{A2}}\qquad\qquad a(x)\in (0, 1] \quad\forall \,\,x\in \mathbb B^N \text{ and }  \mbox{vol}_{\bn}(\{x: a(x)\neq 1\})>0, \label{A2}
 	 \end{align}
  where $\mbox{vol}_{\bn}(X)$ denotes the volume of $X$ in $\bn$.
   
 	Below we state the main results of this article.  
 	 \begin{theorem}\label{Thm-A1}
 	 	Assume that \eqref{assum_A} and \eqref{A1} are satisfied and $S_{\la}:=S_{\la, 2^*-1}$ is the best constant in the Poincar\'e-Sobolev inequality \eqref{S-inq} corresponding to $p=2^*-1$ and $S$ is the best Sobolev constant in the Euclidean space. 
		 If \begin{equation}
 	 		\norm{f}_{H^{-1}(\mathbb B^N)}\le C_aS_\lambda^\frac{N}{4}, \quad \text{where}\quad C_{a}:=\left( \frac{4}{N+2}\right)\Big[(2^*-1)\norm{a}_{\I}\Big]^\frac{-(N-2)}{4},
 	 	\end{equation}
 	 	then $\eqref{Paf}$ admits  a positive solution.
 	 	
 	 	Furthermore, if $\norm{a}_{\I}< (\f{S}{S_{\lambda}})^\f{N}{N-2}$, then \eqref{Paf} admits at least two positive solutions.
 	 \end{theorem}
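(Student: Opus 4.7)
The plan is to realise both positive solutions as critical points of the energy functional
\[ I(u) := \tfrac12\|u\|_\lambda^2 - \tfrac{1}{2^*}\int_{\bn} a(x)(u^+)^{2^*}\,\dvg - \langle f, u\rangle, \quad u\in H^1(\bn); \]
any nontrivial critical point is forced to be strictly positive by $f\geq 0,\ f\not\equiv 0$ together with the strong maximum principle applied to $u^-$. For the \emph{first} solution I would use local minimisation. Combining the Poincar\'e--Sobolev inequality \eqref{S-inq} with the identity $\max_{t>0}[t-\tfrac{\|a\|_\I}{S_\lambda^{N/(N-2)}}t^{2^*-1}] = C_a S_\lambda^{N/4}$, one checks that, under the hypothesis $\|f\|_{H^{-1}}\leq C_a S_\lambda^{N/4}$, the one-dimensional majorant $h(t):= \tfrac{t^2}{2}-\tfrac{\|a\|_\I}{2^* S_\lambda^{N/(N-2)}}t^{2^*} - \|f\|_{H^{-1}}t$ is strictly positive on some sphere $\{\|u\|_\lambda=\rho_1\}$. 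Since $I(t v_0)<0$ for small $t>0$ and some $v_0\geq 0$ with $\langle f,v_0\rangle>0$, the infimum $\alpha:=\inf_{\overline{B_{\rho_1}}} I$ is strictly negative. Ekeland's variational principle then yields a bounded Palais--Smale sequence at level $\alpha$; since $\alpha<0$ is well below any compactness threshold, this sequence converges to an interior critical point $u_1$ with $I(u_1)=\alpha<0$. Testing against $u_1^-$ and invoking the strong maximum principle gives $u_1>0$.

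For the \emph{second} solution, under the extra assumption $\|a\|_\I<(S/S_\lambda)^{N/(N-2)}$, I would apply the Mountain Pass Theorem around $u_1$. Fix a hyperbolic isometry $\varphi_\tau$ sending $0$ to a point $\xi_\tau$ with $d(\xi_\tau,0)=\tau$ and set $\mathcal{U}_\tau:=\mathcal{U}\circ\varphi_\tau^{-1}$. Since $I(u_1+T\mathcal{U}_\tau)\to -\I$ as $T\to\I$, the mountain-pass level
\[ c^*_\tau := \inf_{\gamma\in\Gamma_\tau}\max_{s\in[0,1]}I(\gamma(s)) > I(u_1) \]
is well defined, where $\Gamma_\tau$ denotes the continuous paths from $u_1$ to $u_1+T\mathcal{U}_\tau$. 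The global compactness theorem proved earlier in the paper shows that $(PS)_c$ can fail only by detaching either a hyperbolic bubble escaping to the ideal boundary, at asymptotic cost $\tfrac1N S_\lambda^{N/2}$ (because $a\to 1$ there), or a localized Aubin--Talenti bubble concentrating at a point $x_\star\in\bn$, at cost $\tfrac1N S^{N/2}/a(x_\star)^{(N-2)/2}\geq \tfrac1N S^{N/2}/\|a\|_\I^{(N-2)/2}$. The assumption $\|a\|_\I < (S/S_\lambda)^{N/(N-2)}$ is precisely equivalent to $S^{N/2}/\|a\|_\I^{(N-2)/2}>S_\lambda^{N/2}$, so the smallest atomic cost is $\tfrac1N S_\lambda^{N/2}$ and $(PS)_c$ holds in the window $I(u_1) < c < I(u_1)+\tfrac1N S_\lambda^{N/2}$.

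The heart of the proof is therefore the energy estimate $c^*_\tau < I(u_1)+\tfrac1N S_\lambda^{N/2}$ for $\tau$ sufficiently large. Testing with the linear path $\gamma(s):=u_1+sT\mathcal{U}_\tau$, using that $u_1$ is a critical point of $I$ (so that all terms linear in $\mathcal{U}_\tau$ cancel), and expanding $(u_1+t\mathcal{U}_\tau)^{2^*}$ via a suitable pointwise inequality, one obtains
\[ I(u_1+t\mathcal{U}_\tau) \leq I(u_1) + \tfrac{t^2}{2}\|\mathcal{U}\|_\lambda^2 - \tfrac{t^{2^*}}{2^*}\int_{\bn} a\,\mathcal{U}_\tau^{2^*}\,\dvg - \Phi(u_1,\mathcal{U}_\tau,t), \]
where $\Phi\geq 0$ collects the positive interaction terms between $u_1$ and the bubble. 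Since $\mathcal{U}$ is an extremal for \eqref{S-inq} and $\int a\mathcal{U}_\tau^{2^*}\to \int\mathcal{U}^{2^*}$ as $\tau\to\I$, the supremum over $t\geq 0$ of the right-hand side without $\Phi$ equals $I(u_1)+\tfrac1N S_\lambda^{N/2}+o(1)$. The strict inequality must therefore be bought from $\Phi$, and this quantitative comparison is the \textbf{main obstacle}: one must establish a lower bound of order $e^{-\mu\tau}$, for a suitable $\mu>0$, that beats the perturbative error $\int_{\bn}(a-1)\mathcal{U}_\tau^{2^*}\,\dvg$ produced by $a\to 1$ at infinity. This calls for the sharp exponential decay \eqref{bubble decay} of $\mathcal{U}$, a quantitative decay estimate for $u_1$ toward the ideal boundary, and a careful use of the M\"obius isometries $\varphi_\tau$ parametrising the family of bubbles. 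Once this estimate is in place, the Mountain Pass Theorem together with the compactness argument produces a second critical point $u_2$ with $I(u_2)=c^*_\tau>I(u_1)$; therefore $u_2\neq u_1$, and $u_2>0$ again follows from the strong maximum principle.
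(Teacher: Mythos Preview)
Your overall strategy---local minimisation for the first solution, a mountain--pass argument for the second, with compactness supplied by the profile decomposition---is correct and matches the paper's in spirit. But your route to the second solution is more elaborate than needed and mislocates the difficulty.

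The paper does \emph{not} translate the hyperbolic bubble to infinity in this theorem. It uses the fixed ground state $\mathcal{U}$ centred at the origin, with the scaling $\|a\|_\infty^{(2-N)/4}\mathcal{U}$, and proves directly that
\[
\mathcal I_{\lambda,a,f}\bigl(u_1+t\,\|a\|_\infty^{(2-N)/4}\mathcal{U}\bigr) < \mathcal I_{\lambda,a,f}(u_1)+\mathcal I_{\lambda,1,0}(\mathcal{U}) \quad\text{for every }t>0,
\]
via the elementary pointwise inequality $(\alpha+\beta)^{2^*}>\alpha^{2^*}+\beta^{2^*}+2^*\alpha^{2^*-1}\beta$ for $\alpha,\beta>0$ together with $a(x)\geq 1$. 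No asymptotics in $\tau$, no decay estimates, no interaction bounds are needed. Moreover, for the first solution the paper does not minimise on a small ball but on the component $U_1=\{g>0\}$ of a Nehari--type partition $U_0\cup U_1\cup U_2$ determined by $g(u)=\|u\|_\lambda^2-(2^*-1)\|a\|_\infty\|u\|_{2^*}^{2^*}$; this buys the clean separation $k\geq c_0>c_1$ since every admissible path must cross $U_0$, and it is precisely here---showing that the Palais--Smale sequence cannot lose mass to bubbles while staying in $U_1$---that the paper spends its effort on the interaction terms.

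Your ``main obstacle''---beating the error $\int_{\bn}(a-1)\,\mathcal{U}_\tau^{2^*}\,\dvg$ by the interaction term $\Phi$---is \emph{not} an obstacle under \eqref{A1}: since $a\geq 1$, that integral is nonnegative and works in your favour, so the strict inequality $\sup_t I(u_1+t\mathcal{U}_\tau)<I(u_1)+\tfrac{1}{N}S_\lambda^{N/2}$ follows immediately from $\Phi>0$. What you describe is exactly the genuine difficulty in the complementary case $a\leq 1$ (Theorem~\ref{Thm-A2}), and indeed that is where the paper does push the bubble to infinity and carry out the quantitative exponential comparison (Proposition~\ref{energy-prop}). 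So your proof would work, but you have imported a complication from the wrong regime; the paper's fixed--bubble argument is both shorter and conceptually cleaner for $a\geq 1$.
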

   
   \begin{theorem}\label{Thm-A2}
       Let   \eqref{assum_A} and \eqref{A2} be satisfied. In addition, if\begin{equation}
           \label{A_cond}a(x)\geq 1-Cexp(-\delta d(0,x)) \end{equation}
      for some positive constants $C,\delta$ and there exists a   constant $d>0$  such that $\norm{f}_{H^{-1}(\mathbb B^N)}\le d$. Then $\eqref{Paf}$ admits at least three positive solutions.
   \end{theorem}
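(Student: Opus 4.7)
My plan is to construct three distinct positive critical points of
$$I(u) = \tfrac{1}{2}\|u\|_\la^2 - \tfrac{1}{2^*}\int_{\bn} a(x)|u^+|^{2^*}\dvg - \langle f, u\rangle$$
at three ordered energy levels $c_0 < c_1 < c_2$, each strictly below the appropriate compactness threshold dictated by the global compactness (profile decomposition) result proved earlier in the paper. The overall scheme is: a local minimum near $0$ coming from the smallness of $f$; a first mountain-pass solution between that minimum and single-bubble configurations; and a higher-energy solution detected by a topological argument exploiting two-bubble configurations. The strict inequality in \eqref{A2} is what makes the third solution accessible, because it lowers the effective energy of hyperbolic bubbles centered in $\{a<1\}$ strictly below the corresponding one- and two-bubble compactness thresholds.

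\textbf{First solution.} Since $f \geq 0$, $f \not\equiv 0$, and $\|f\|_{H^{-1}(\bn)} \leq d$ is small, one has the standard well-type geometry near $0$: for some $\rho > 0$ and $\delta > 0$, $\inf_{\|u\|_\la = \rho} I \geq \delta$, while $I$ attains strictly negative values inside $B_\rho(0)$ due to the linear term $-\langle f,u\rangle$. An Ekeland variational principle on $\overline{B_\rho(0)}$ produces a Palais--Smale sequence at a level $c_0 < 0$, which lies strictly below the first compactness threshold $\tfrac{1}{N}S_\la^{N/2}$. The profile decomposition then forces strong convergence, and truncation together with the maximum principle yields a positive solution $u_0$ with $I(u_0)=c_0$.

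\textbf{Second solution.} Since $u_0$ is a strict local minimum and $I(u_0 + t\psi) \to -\infty$ as $t \to \infty$ for any $\psi \geq 0$ with $\int a|\psi|^{2^*}\dvg > 0$, mountain-pass geometry applies. The candidate minimax value
$$c_1 = \inf_{\gamma \in \Gamma}\max_{s \in [0,1]} I(\gamma(s))$$
must be placed strictly below $c_0 + \tfrac{1}{N} S_\la^{N/2}$, the next compactness threshold corresponding to adding one hyperbolic bubble to $u_0$. Test paths of the form $s \mapsto u_0 + st\,\mathcal{U}_\tau$ with $\tau$ chosen inside $\{a<1\}$ should achieve this: the deficit $\int (1-a)|\mathcal{U}_\tau|^{2^*}\dvg$ is strictly positive and lowers the energy below the unperturbed single-bubble level, while cross-terms between $u_0$ and $\mathcal{U}_\tau$ are controlled using \eqref{bubble decay} and the exponential bound \eqref{A_cond}. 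A further Ekeland argument then produces a positive critical point $u_1$ with $I(u_1) = c_1 > c_0$.

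\textbf{Third solution.} For the additional solution I would exploit the topology of the space of bubble centers. Consider the map $\Phi:\bn \to H^1(\bn)$ sending $\tau$ to a suitably normalized translate $\mathcal{U}_\tau$, project appropriately onto the Nehari manifold, and set up a linking or Lusternik--Schnirelmann category argument between one-bubble and two-bubble configurations, with a barycenter-type map used to detect the nontrivial topology. The relevant compactness level is now $c_0 + \tfrac{2}{N}S_\la^{N/2}$. The two-bubble test family $u_0 + \mathcal{U}_{\tau_1} + \mathcal{U}_{\tau_2}$, with $\tau_1,\tau_2 \in \{a<1\}$, must be shown to have maximum energy strictly below this threshold: the positive interaction integral $\int \mathcal{U}_{\tau_1}^{2^*-1}\mathcal{U}_{\tau_2}\dvg$ decays exponentially in $d(\tau_1,\tau_2)$ by \eqref{bubble decay}, while the $a$-deficit furnishes a strictly negative contribution of order $e^{-\delta d(0,\tau_i)}$ through \eqref{A_cond}; an appropriate balancing of these two exponential scales yields the desired strict inequality. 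A minimax argument then produces the third positive critical point $u_2$ at a level $c_2 \in (c_1, c_0 + \tfrac{2}{N}S_\la^{N/2})$.

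\textbf{Main obstacle.} The principal technical hurdle is the third step: the sharp simultaneous comparison of two competing exponentially small quantities --- the positive interaction between hyperbolic bubbles and the negative $a$-deficit from \eqref{A2} + \eqref{A_cond} --- must be carried out delicately and \emph{uniformly} over the minimax parameter, so that the entire test family is placed strictly below the two-bubble compactness threshold. It is exactly this balancing that forces the quantitative assumption \eqref{A_cond} rather than the mere asymptotic $a \to 1$ at infinity, and it is the technical heart of the proof.
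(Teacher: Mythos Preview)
Your approach has a sign error that propagates through the second and third solutions. Under \eqref{A2}, $a(x)\le 1$, so the ``deficit'' term
\[
\frac{t^{2^*}}{2^*}\int_{\bn}\bigl(1-a(x)\bigr)\,\mathcal U(\tau_{-y}(x))^{2^*}\dvg
\]
is \emph{nonnegative}, hence \emph{raises} the energy of $u_0+t\,\mathcal U_\tau$ above $I(u_0)+\mathcal I_{\la,1,0}(t\mathcal U)$ rather than lowering it. Placing the bubble inside $\{a<1\}$ therefore pushes the test path \emph{above} the one-bubble threshold, not below it; the same mistake recurs in your two-bubble step, where you claim the $a$-deficit gives a strictly negative contribution. (You may be thinking of the case \eqref{A1}, where $a\ge 1$ reverses the sign.)

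The paper's mechanism is the opposite of yours. One sends the bubble center $y$ to infinity, so that by \eqref{A_cond} the bad deficit term decays like $e^{-2^*c(N,\la)d(y,0)}$ (this is exactly where \eqref{A_cond} enters --- not to produce a gain, but to control a loss). The gain comes instead from the \emph{nonlinear interaction} between $u_0$ and the bubble, namely
\[
\int_{\bn}a(x)\Bigl[(u_0+t\,\mathcal U_\tau)^{2^*}-u_0^{2^*}-(t\,\mathcal U_\tau)^{2^*}-2^*\,u_0^{2^*-1}t\,\mathcal U_\tau\Bigr]\dvg\ \ge\ C\,e^{-2(1+\eps)c(N,\la)d(y,0)},
\]
which decays more slowly and eventually dominates. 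Thus the strict inequality $I(u_0+t\,\mathcal U_\tau)<I(u_0)+\mathcal I_{\la,1,0}(\mathcal U)$ holds for all $t>0$ once $d(y,0)$ is large. With this in hand, the paper does \emph{not} go to the two-bubble threshold for the third solution: it stays entirely within the first compactness window $c<I(u_0)+\mathcal I_{\la,1,0}(\mathcal U)$ and uses Lusternik--Schnirelmann category. A map $F_R:S^{N-1}_{\bn}\to\{E_{\la,a,f}<I(u_0)+\mathcal I_{\la,1,0}(\mathcal U)-\eps\}$ is built from bubbles translated to $\{d(y,0)=R\}$, a barycenter map $G$ goes back, and $G\circ F_R$ is homotopic to the identity, giving category $\ge 2$ and hence two critical points there, plus the local minimum. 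Your two-bubble route would also have to contend with PS breaking at $c_1+\mathcal I_{\la,1,0}(\mathcal U)<c_0+2\mathcal I_{\la,1,0}(\mathcal U)$ once $u_1$ exists, which is an additional obstruction.
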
 
  
Adachi-Tanaka \cite{Adachi} considered a subcritical version of \eqref{Paf} in the whole Euclidean space with $\la=-1$, which is considered as a perturbation of the classical scalar field equation and studied the multiplicity results. From the mathematical point of view, it is natural to ask whether that problem admits a positive solution and if yes, then its multiplicity/uniqueness, i.e., whether the positive solutions are stable under the perturbation of type \eqref{Paf} (subcritical exponent), is studied. These questions were quite comprehensively studied by Adachi-Tanaka \cite{Adachi}.  Moreover, variant of the scalar field equations with potential $a(.)$ has been studied thoroughly in the past few decades, starting from the seminal papers of Bahri-Berestycki, Bahri-Li, Bahri-Lions, Berestycki-Lions \cite{BB1, Bahri-Li, Bahri, BL1, BL2}. Subsequently, these topics have been brought to a high level of sophistication concerning the assumptions on the potential $a(.)$ and related multiplicity questions, see e.g., \cite{Ad, AT2, CG1, CG2, CG3, CG4, CW} and references quoted therein, although this list is far from being complete.  In \cite{Adachi}, the existence of four solutions has been obtained under the hypothesis \eqref{A2}. Moreover, in \cite{CZ, J}, the existence of two positive solutions is established when the potential $a(.)$ satisfies \eqref{A1}, and $f\not\equiv 0$ (but small). The papers mentioned above employ topological arguments, like Lusternik-Schnirelman (L-S) category and the min-max arguments, to obtain their multiplicity results. But for such arguments to work, precise energy estimates of solutions to the "limiting problem" are required to avoid the critical level (breaking level) of the Palais-Smale sequences. In all these articles the subcritical problem in the whole Euclidean space $\mathbb{R}^N$ has been considered, and one can easily see using Pohazaev identity that the homogeneous scalar-field equation with critical exponent in the entire $\mathbb{R}^N$ would not have any solutions. Therefore, the problem we considered in the present article has no counterpart in Euclidean space. Therefore the techniques of the proof in the present article are not a straightforward adaptation from Euclidean space. There are some intrinsic features of the hyperbolic space which appear in the proofs.

   Like the Euclidean case, the Poincar\'e-Sobolev inequality is invariant under the action of the conformal group of the ball model $\bn.$ In the Euclidean space, the translations, dilations and inversions constitute the conformal group of $\Rn$, and Sobolev inequality is invariant under their actions. In the case of the hyperbolic space, the conformal group coincides with the isometry group and its elements are given by the \it hyperbolic translations \rm (see Section \ref{pre}).  As a result, both the critical and the subcritical Poincar\'e-Sobolev inequality are invariant under the action of the same conformal group (reminiscent of the translations in the Euclidean setting), and therefore, there is no compactness.
Thus  the variational functional associated with \eqref{Paf} fails to satisfy the Palais-Smale condition, briefly called (PS)
condition. Therefore, the standard
variational technique cannot be applied directly. To overcome this difficulty, the a priori knowledge of the energy range where
the Palais-Smale condition holds is helpful and sometimes suffices to construct critical points. Theorem \ref{ps_decom} studies the profile decomposition of any Palais-Smale sequence possessed by the underlying functional associated with \eqref{Paf}. We show that concentration takes place along two different profiles. For $f=0$ and $a\equiv 1$, i.e., for \eqref{E}, profile decomposition was studied in \cite{BS}. However, an extension of profile decomposition from \cite{BS} to the present set-up
is highly nontrivial (due to the presence of the potential $a(.)$) and requires several delicate estimates and geometric arguments concerning the isometry group (M\"obius group) of the hyperbolic space. The PS decomposition enables us to precisely locate the loss of compactness of the corresponding Palais-Smale sequence at different energy levels.  Our existence results intricately depend on this decomposition. 

Now let us briefly explain the methodology that we have applied to obtain the existence of solutions.  As one anticipates, we follow the topological/variational arguments to obtain multiple solutions. Still, the major hurdle lies in the energy estimates involving solutions to \eqref{E}, which is one of the key ingredients in the proof of Theorem~\ref{Thm-A1} and Theorem~\ref{Thm-A2}. While proving Theorem~\ref{Thm-A1}, we estimate precisely the interactions of different bubbles. Moreover, the interactions are of three types: between hyperbolic bubbles, Aubin-Talenti bubbles (which are quite well-known in the literature) and localized  Aubin-Talenti-hyperbolic bubbles. The latter is a completely new phenomenon, and it is quite delicately handled in this case, although we require only a qualitative interaction between the bubbles.  Also, one can note that the hyperbolic volume grows exponentially, i.e.,
$$\dvg\approx e^{(N-1)r}, \quad r\to\infty,$$
where $r$ denotes the geodesic distance $d(x,0)$, and $\dvg$ denotes the hyperbolic volume form, and to compensate for the exponential volume growth of the hyperbolic space, one requires a new way to tackle the integrals involving interacting bubbles.

We establish a series of new estimates for interacting terms, which are crucial in proving the main theorems. 

 
  To prove Theorem \ref{Thm-A1}, we first do the Palais-Smale decomposition of the functional associated with \eqref{Paf}. Then we decompose $H^1(\bn)$ into
three components: homeomorphic to the interior, boundary and exterior of the unit ball in $H^1(\bn)$, respectively. Thus, using assumptions \eqref{assum_A} and \eqref{A1}, we prove that the energy functional associated with \eqref{Paf}
attains its infimum on one of the components, which serves as our first positive solution. The second positive
solution is obtained via a careful analysis of the (PS) sequences associated with the energy functional, and we
construct a min-max critical level where the PS condition holds. That leads to the existence of a second
positive solution.

We deploy a mix of variational and topological techniques to prove Theorem \ref{Thm-A2}. We begin by estimating the energy of a combination of a solution of \eqref{Paf} and the solution of the limiting problem \eqref{E}. It is established that this energy is below a level that is later proved to be the level below which the PS condition is valid. Then we exploit the convexity of the functional and locate the first solution near zero as a local minimizer. Next, we use the global compactness approach to discover other possible solutions. We determine the first level below which the compactness remains intact or, in other words, the first level below which the PS condition holds by applying the Palais Smale decomposition discussed in Section \ref{PSsection}. It is worth noting that due to the loss of compactness in two profiles, one because of the hyperbolic translations and the other due to the presence of the critical exponent, it was intriguing as well as challenging to examine this first level of breakage of the PS condition. However, we could compare the energies of the Aubin Talenti bubble and the hyperbolic bubble thanks to the well-known inequality $S_{\lambda}<S$ (see \cite{BGKM}) and our assumption \eqref{A2}. Thus we successfully analysed the initial energy level where the lack of compactness happens. This level, along with the L-S category theory, allows us to determine the lower bound on the critical points of the energy functional associated with \eqref{Paf}. The key step here is to prove the category of a level set $\geq 2$ for which we built a map homotopic to the identity map defined on a unit geodesic sphere. Finally, the critical values ensure these solutions are distinct, giving us multiple solutions.

The paper is organized as follows: Section \ref{pre} is the preliminaries, where we introduce various notations, geometric
definitions, and basic results in the hyperbolic space.  In Section \ref{PSsection}, we state and prove the Palais-Smale decomposition theorem as
Theorem \ref{ps_decom}.  Section \ref{Thm1sec} deals with the proof of Theorem \ref{Thm-A1}. Section \ref{KER}
is devoted to the key energy estimates for the functional associated with \eqref{Paf} when the potential $a\leq 1$. In Section \ref{Thm2sec}, we prove Theorem \ref{Thm-A2}, and Section \ref{appen} is the appendix.

\medskip

{\bf Notation:} We denote by $\mathcal{H}$ as $H^1(\mathbb B^N)$ and $\mathcal{H'}:=H^{-1}(\mathbb B^N)$. Throughout this paper, we denote by $\mathcal{U}$ as the unique positive solution of \eqref{E}. $S_{\la}$ denotes the best constant in \eqref{S-inq} corresponding to $p=2^*-1$ and $S$ denotes the best constant in Sobolev inequality in $\Rn$. $\tau_b(.)$ denotes the hyperbolic translation (see \eqref{hyperbolictranslation}) with $\tau_b(0)=b$.  Further, define
$$ \underline{a}:=\inf_{x\in \mathbb{B}^N}a(x)>0\,\,\text{ and }\,\,\bar{a}:=\sup_{x\in \mathbb{B}^N}a(x).$$

    \section{Preliminaries}\label{pre}
   
 We denote the inner product on the tangent space of $\bn$ by $\langle,\rangle_{\bn}$. $\nabla_{\bn}$ and $\Delta_{\bn}$ denote gradient 
 vector field and Laplace-Beltrami operator, respectively. Therefore in terms of local (global) coordinates $\nabla_{\bn}$ and $\Delta_{\bn}$ takes the form
\begin{align*} 
 \nabla_{\bn} = \left(\frac{1 - |x|^2}{2}\right)^2\nabla,  \quad 
 \Delta_{\bn} = \left(\frac{1 - |x|^2}{2}\right)^2 \Delta + (N - 2)\left(\frac{1 - |x|^2}{2}\right)  x \cdot \nabla,
\end{align*}
where $\nabla, \Delta$ are the standard Euclidean gradient vector field and Laplace operator, respectively, and '$\cdot$' denotes the 
standard inner product in $\mathbb{R}^N.$

 \medskip 

\noindent

 {\bf Hyperbolic distance on $\bn.$} The hyperbolic distance between two points $x$ and $y$ in $\bn$ will be denoted by $d(x, y).$ The hyperbolic distance between
$x$ and the origin can be computed explicitly  
\begin{align}\label{hyp-dist}
\rho := \, d(x, 0) = \int_{0}^{|x|} \frac{2}{1 - s^2} \, {\rm d}s \, = \, \log \frac{1 + |x|}{1 - |x|},
\end{align}
and therefore  $|x| = \tanh \frac{\rho}{2}.$ Moreover, the hyperbolic distance between $x, y \in \bn$ is given by 
\begin{align*}
d(x, y) = \cosh^{-1} \left( 1 + \dfrac{2|x - y|^2}{(1 - |x|^2)(1 - |y|^2)} \right).
\end{align*}
As a result, a subset of $\bn$ is a hyperbolic sphere in $\bn$ if and only if it is a Euclidean sphere in $\mathbb{R}^N$ and contained in $\bn$, possibly 
with a different centre and different radius, which can be explicitly computed from the formula of $d(x,y)$ \cite{RAT}.  Geodesic balls in $\bn$ of radius $r$ centred at $x \in \bn$ will be denoted by 
$$
B_r(x) : = \{ y \in \bn : d(x, y) < r \}.
$$

Next, we introduce the concept of hyperbolic translation.

 \medskip 

\noindent

\textbf{Hyperbolic translation.} Given a point $a \in \Rn$ such that $|a|>1$  and $r>0,$ let $S(a,r):=\{x \in \Rn \ | \ |x-a| = r\}$ be the sphere in $\Rn$ with center $a$ and radius $r$ that intersects $S(0,1)$ orthogonally. It is known that it is the case if and only if $r^2 = |a|^2 -1,$ and hence $r$ is determined by $a.$ Let $\rho_{a}$ denotes the reflection with respect to the plane $P_a := \{x \in \Rn \ | \ x\cdot a = 0\}$ and $\sigma_a$ denotes the inversion with respect to the sphere $S(a,r).$ Then $\sigma_a\rho_a$ leaves $\bn$ invariant (see \cite{RAT}).
 
 For $b \in \mathbb{B}^N,$ the hyperbolic translation $\tau_{b}: \mathbb{B}^N \rightarrow \mathbb{B}^N$ that takes $0$ to $b$ is defined by $\tau_b = \sigma_{b^*}\circ\rho_{b^*}$ and can be expressed by the following formula
 
  \begin{align} \label{hyperbolictranslation}
  \tau_{b}(x) := \frac{(1 - |b|^2)x + (|x|^2 + 2 x \cdot b + 1)b}{|b|^2|x|^2 + 2 x\cdot b  + 1},
 \end{align}
where $b^* = \frac{b}{|b|^2}.$
 It turns out that $\tau_{b}$ is an isometry and forms the M\"obius group of $\bn$ (see \cite{RAT}, Theorem 4.4.6 for details and further discussions on isometries). Note that $\tau_{-b} = \sigma_{-b^*}\circ\rho_{-b^*}$ is the hyperbolic translation that takes $b$ to the origin. In other words, the hyperbolic translation that takes $b$ to the origin is the composition of the reflection $\rho_{-b^*}$ and the inversion $\sigma_{-b^*}.$

 For the convenience of the reader, we recall a lemma from \cite{AX14618}(also see \cite{Stoll}). 

\begin{lemma}\cite[Lemma 2.1]{AX14618}\label{lemma1}
  For $b \in \B^N,$ let $\tau_b$ be the hyperbolic translation of $\mathbb{B}^N$ by $b.$ Then for every $u \in C_c^{\infty}(\mathbb{B}^N),$ there holds,

\begin{itemize}
  
\item[(i)] $\Delta_{\bn} (u \circ \tau_b) = (\Delta_{\bn} u) \circ \tau_b$.

  \item[(ii)]  $\langle \nabla_{\bn} (u \circ \tau_b),
   \nabla_{\bn} (u \circ \tau_b)\rangle_{\bn} = \langle(\nabla_{\bn} u) \circ \tau_b, (\nabla_{\bn} u) \circ \tau_b\rangle_{\bn}.$
  
  \item[(iii)] For every open subset $U$ of $\mathbb{B}^N$
  \begin{align*}
  \int_{U} |u \circ \tau_b|^p \dvg = \int_{\tau_b(U)} |u|^p \dvg , \ \mbox{for all} \ 1 \leq p < \infty.
 \end{align*}
 
 \item[(iv)] For every $\phi, \psi \in C_c^{\infty}(\B^N)$,
 \begin{align*}
 \int_{\B^N} \phi(x) (\psi \circ \tau_b)(x) \dvg = \int_{\B^N} (\phi \circ \tau_{-b})(x) \psi(x) \dvg. 
 \end{align*}
\end{itemize}
\end{lemma}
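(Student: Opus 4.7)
The plan is to derive all four assertions from the single fact that $\tau_b$ is a smooth isometry of the Riemannian manifold $(\bn, g_{\bn})$, which is Theorem 4.4.6 of Ratcliffe as cited above. One can verify this directly by recalling that $\tau_b = \sigma_{b^*}\circ\rho_{b^*}$ is a composition of a Euclidean reflection and a sphere inversion, both of which are conformal with easily computable conformal factors; substituting the explicit formula \eqref{hyperbolictranslation}, one checks that
\[
\frac{2}{1-|\tau_b(x)|^2}\,|d\tau_b(x)\,v| \;=\; \frac{2}{1-|x|^2}\,|v|
\]
for every $v\in T_x\mathbb{R}^N$, where $|\cdot|$ denotes the Euclidean norm. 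This is precisely the statement $\tau_b^* g_{\bn}=g_{\bn}$. Once this identity is granted, the remaining four items are coordinate-free consequences.

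For (ii), by the chain rule $d(u\circ\tau_b)_x = (du)_{\tau_b(x)}\circ d(\tau_b)_x$; raising the index via $g_{\bn}$ and using that $d(\tau_b)_x$ is a linear isometry from $(T_x\bn, g_{\bn})$ to $(T_{\tau_b(x)}\bn, g_{\bn})$ gives both the intertwining relation $\nabla_{\bn}(u\circ\tau_b)(x) = d(\tau_b)_x^{-1}\bigl((\nabla_{\bn}u)(\tau_b(x))\bigr)$ and the equality of the two norms in (ii). Assertion (i) then follows from the coordinate-free identity $\Delta_{\bn}u = \mathrm{div}_{\bn}(\nabla_{\bn}u)$: both the gradient and the divergence are isometry-equivariant, so the Laplace--Beltrami operator commutes with the pullback by $\tau_b$. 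Alternatively, one can verify (i) by a direct computation using the explicit coordinate expressions for $\nabla_{\bn}$ and $\Delta_{\bn}$ recorded in Section \ref{pre} together with \eqref{hyperbolictranslation}.

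For (iii), an isometry preserves the Riemannian volume element, hence the standard change of variables formula yields
\[
\int_{U} |u\circ\tau_b|^p \dvg \;=\; \int_{\tau_b(U)} |u|^p \dvg, \qquad 1 \le p < \infty.
\]
Finally, (iv) is an immediate consequence of (iii) (with $p=1$) applied to the function $(\phi\circ\tau_{-b})\cdot\psi$ on $U=\bn$, once one uses that $\tau_b(\bn)=\bn$ and that $(\phi\circ\tau_{-b})\circ\tau_b = \phi$ because $\tau_{-b} = \tau_b^{-1}$. The only substantive point in the whole argument is the initial algebraic verification that the explicit map \eqref{hyperbolictranslation} has unit conformal factor with respect to $g_{\bn}$; once that is in hand, everything else is formal.
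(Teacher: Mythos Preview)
Your argument is correct. All four items are indeed immediate consequences of the fact that $\tau_b$ is an isometry of $(\bn,g_{\bn})$: (i) and (ii) because the Laplace--Beltrami operator and the metric length of the gradient are intrinsic Riemannian objects that commute with pullback by isometries, (iii) because isometries preserve the Riemannian volume form, and (iv) by the same change of variables together with $\tau_{-b}=\tau_b^{-1}$ (which you correctly invoke and which can be checked from the description $\tau_b=\sigma_{b^*}\circ\rho_{b^*}$ by conjugating $\sigma_{b^*}$ through the reflection $\rho_{b^*}$). One cosmetic remark: deducing (iv) literally from (iii) with $p=1$ gives the identity with absolute values; to remove them one either splits into positive and negative parts or, more naturally, just observes that the underlying change-of-variables formula you used for (iii) does not require the modulus in the first place.

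As for comparison with the paper: there is nothing to compare. The paper does not prove this lemma; it merely recalls it from \cite{AX14618} (and \cite{Stoll}) and then notes that the formulas extend by density. Your write-up therefore supplies what the paper omits, and does so along the standard Riemannian-geometry lines one would expect.
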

The above formulas hold by density as long as the integrals involved are finite.

 \medskip 

\noindent
   
  \textbf{Conformal change of metric.} Now we briefly discuss some properties of conformal change of metric (for more details, refer to \cite[Section 1.1]{BGKM}). Consider the following problem
  \begin{equation}
  	-\De_{\mathbb B^N} u -\lambda u=a(x) \abs{u}^{2^*-2}u.\label{la_a_H}
  \end{equation} Let us denote the energy function corresponding to the problem \eqref{la_a_H} by
  $$ I_h(u)=\f{1}{2}\Big[\int_{ \mathbb B^N}\left(|\nabla_{\mathbb{B}^{N}}u|^2-\lambda u^2 \right) \mathrm{~d} V_{\bn}\Big]-\f{1}{2^*}\int_{\mathbb B^N}a(x)|u|^{2^*}\mathrm{~d} V_{\bn}(x).$$
  Observe that $u$ solves the problem \eqref{la_a_H} iff $v:=\Big(\frac{2}{1-|x|^2}\Big)^\frac{N-2}{2}u$ \,  solves the Euclidean equation
  \begin{equation}
  	-\De v -\tilde\lambda\Big(\frac{2}{1-|x|^2}\Big)^2 v=a(x) \abs{v}^{2^*-2}v\label{la_a_E}, \quad v\in H^1_0(  B(0,1)),
  \end{equation}
  where $\tilde\lambda=\big(\lambda-\frac{N(N-2)}{4}\big)$, and $B(0,1)$ is the open unit ball in the Euclidean space with the centre at the origin. Let us denote the energy function corresponding to  \eqref{la_a_E} by
  $$ I_e(v)=\f{1}{2}\Big[\int_{ B(0,1)}\left(|\nabla v|^2-\tilde\lambda\Big(\frac{2}{1-|x|^2}\Big)^2 v^2\right)\mathrm{~d} x\Big]-\f{1}{2^*}\int_{ B(0,1)}a(x)|v|^{2^*}\mathrm{~d} x.$$
  Therefore $I_h(u)=I_e(v) $. Then for any $u,w\in  H^1(\mathbb B^N)$, we have  $\langle I'_h(u), w \rangle{_{\bn}} = \langle I'_e(\bar u), \bar w \rangle$, where $\bar u=\big(\frac{2}{1-|x|^2}\big)^\frac{N-2}{2}u$ and $\bar w=\big(\frac{2}{1-|x|^2}\big)^\frac{N-2}{2}w$. 
  

 	\section{Palais-Smale  decomposition} \label{PSsection}
 	In this section, we study the profile decomposition of the Palais-Smale sequences (PS sequences) of the energy functional associated with \eqref{Paf}.  
	Corresponding to  \eqref{Paf}, we define the energy functional $\mathcal I_{\lambda,a,f}:\mathcal{H}\to\R$ by
 	 	\be
 	 	\mathcal I_{\lambda,a,f}(u)=\f{1}{2}\norm{u}^2_\la-\f{1}{2^*}\int_{\mathbb B^N}a(x)|u|^{2^*}\mathrm{~d} V_{\bn}(x)- \int_{\mathbb B^N}f(x)u \mathrm{~d} V_{\bn}(x).
 	 	\ee
 	 A sequence $(u_n)_n$ in $\mathcal{H}$ is said to be a PS sequence of $\mathcal I_{\lambda,a,f}$ at a level $c$  if  $\mathcal I_{\lambda,a,f}(u_n)\to c$  and $\mathcal I^{\prime}_{\lambda,a,f}(u_n)\to 0$ in $\mathcal H'$.
	
	It is worth noticing that if $\mathcal{U}$ is a solution of \eqref{E}, then $\mathcal{U}\circ\tau$ is also a solution of the same equation 
for any $\tau\in I(\mathbb B^N)$, set of all isometries on $\mathbb B^N$. In particular, if we define
		\be\label{seq-un}u_n=\mathcal{U}\circ T_n,\ee
with $T_n\in I(\mathbb B^N)$ such that $T_n(0)\to\infty$, then $u_n$ is a PS sequence converging weakly to zero. Thus in the limiting case, i.e., $a(x)\to 1$ as $d(x,0)\to\infty$  the functional $\mathcal{I}_{\la, a,0}$ exhibits Palais-Smale sequences of the form $u_n$. 

On the other hand, we can exhibit another PS sequence coming from
the concentration phenomenon. Let $W$ be a solution of 
\be\label{Talenti}
-\De W =  \abs{W}^{2^*-2}W, \,\,\,\,\, W\in D^{1,2}(\mathbb{R}^N). 
\ee
The corresponding energy $J(W)$ is given by
$$J(W)=\frac{1}{2}\int_{\mathbb{R}^N}|\nabla W|^2\,\mathrm{~d} x-\frac{1}{2^*}\int_{\mathbb{R}^N}|W|^{2^*}\mathrm{~d} x.$$
 
Choose $y_n\in\mathbb{B}^N$ such that $|y_n| \leq 2- \sqrt{3}<r<R<1$, $y_n\to y_0$ and $\phi \in C_c^{\infty}\left(\mathbb{B}^N\right)$ such that $0 \leq \phi \leq 1,\; \phi(x)=1$ for $|x|<r$ and $\phi(x)=0$ for $|x|>R$. 

Define 
\be\label{seq-vn}
v_{n}(x):=\Big(\f{1-|x|^2}{2}\Big)^\f{N-2}{2}a(y_0)^{\frac{2-N}{4}}\phi(x)  \eps_n^{\f{2-N}{2}}W\(\f{x-y_n}{\eps_n}\),
\ee
  where $\eps_n>0$ and $\eps_n\to0$, then direct calculations show that $v_n$ is also a PS sequence for $\mathcal{I}_{\la, a, 0}$ (see claim I in the proof of Theorem \ref{ps_decom}).

 	\begin{theorem}\label{ps_decom}
 		Let $(u_n)_n\in \mathcal{H}$ be a PS sequence of $\mathcal I_{\lambda,a,f}$ at a level $\ba \geq 0$. Then there exist a critical point $u$ of $\mathcal I_{\lambda, a,f}$,  integers $n_1,n_2\in \N\cup\{0\}$ and functions $u^j_n, v^k_n\in \mathcal{H}$, $y^k\in\bn$ for all $1\leq j\leq n_1$ and $1\leq k\leq n_2$ such that  up to a subsequence (still denoted by the same index)
 		$$u_n=u+\sum_{j=1}^{n_1}u_n^j+\sum_{k=1}^{n_2}v_n^k+o(1),$$
 	where $u_n^j$ and $v_n^k$ are Palais-Smale sequence of the form \eqref{seq-un} and \eqref{seq-vn}, respectively and $o(1)\to 0$ in $\mathcal{H}$. Moreover, 
	$$\ba=\mathcal I_{\lambda,a,f}(u)+\sum_{j=1}^{n_1}\mathcal I_{\lambda,1,0}(\mathcal{U}^j)+\sum_{k=1}^{n_2} a(y^k)^{\frac{2-N}{2}}J(V^k)+o(1),$$
where $\mathcal{U}^j$ and $V^k$ are the solutions of \eqref{E} (without the sign condition) and \eqref{Talenti} corresponding to $u^j_n$ and $v_n^k$. Furthermore, we have
\begin{enumerate}
	\item $T^j_n \circ T^{-i}_n(0)\to \I $ as $n\to \I$ for $i\ne j$.
	\item $\Big| \log\big(\frac{\eps^k_{n} }{\eps^l_{n}}\big)\Big|+ \Big| \frac{y_n^k-y^l_n}{\eps^l_{n}}\Big|\to \I$ as $n\to \I$ for $k\ne l$.
\end{enumerate}
\end{theorem}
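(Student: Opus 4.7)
The plan is to adapt the Struwe--Lions iterative profile extraction to the hyperbolic setting with non-constant potential $a$. First, I would prove boundedness of $(u_n)$ in $\mathcal{H}$: combining
\[ \mathcal{I}_{\lambda,a,f}(u_n) - \tfrac{1}{2^*}\langle \mathcal{I}'_{\lambda,a,f}(u_n), u_n\rangle = \big(\tfrac12 - \tfrac{1}{2^*}\big)\|u_n\|_\lambda^2 - \big(1 - \tfrac{1}{2^*}\big)\langle f, u_n\rangle \]
with $|\langle f, u_n\rangle|\le \|f\|_{\mathcal{H}'}\|u_n\|_\lambda$, $\mathcal{I}_{\lambda,a,f}(u_n)\to\ba$ and $\|\mathcal{I}'_{\lambda,a,f}(u_n)\|_{\mathcal{H}'}\to 0$ yields the bound. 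Extract $u_n\rightharpoonup u$ weakly in $\mathcal{H}$, a.e.\ pointwise, and $L^q_{\mathrm{loc}}$-strongly for $q<2^*$. The weak limit $u$ is a critical point of $\mathcal{I}_{\lambda,a,f}$: test against $\varphi\in C_c^\infty(\bn)$ and pass to the limit via Vitali equi-integrability for the critical term. Setting $\tilde u_n := u_n - u$, the Brezis--Lieb lemma applied to $\int a|u_n|^{2^*}\dvg$, together with linearity of the $f$-term, shows that $(\tilde u_n)$ is a PS sequence for the limiting functional $\mathcal{I}_{\lambda,a,0}$ at level $\ba - \mathcal{I}_{\lambda,a,f}(u)$, with $\tilde u_n\rightharpoonup 0$.

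If $\tilde u_n\to 0$ strongly in $\mathcal{H}$, take $n_1=n_2=0$ and stop; otherwise I would perform one concentration extraction. Consider $Q_n(t):=\sup_{y\in\bn}\int_{B_t(y)}|\tilde u_n|^{2^*}\dvg$; non-vanishing at some fixed $t>0$ produces points $y_n\in\bn$ where $L^{2^*}$-mass localizes. Along a subsequence, either $d(y_n,0)\to\infty$ (\emph{hyperbolic regime}) or $y_n\to y_0\in\bn$ (\emph{concentration regime}). In the hyperbolic regime, choose $T_n\in I(\bn)$ with $T_n(0)=y_n$ (e.g.\ $T_n=\tau_{y_n}$); by Lemma~\ref{lemma1}, $\tilde u_n\circ T_n$ is bounded in $\mathcal{H}$ and along a subsequence converges weakly to some $\mathcal{U}^1\not\equiv 0$. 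Since $T_n(x)\to\infty$ for each fixed $x$, assumption \eqref{assum_A} gives $a\circ T_n\to 1$ in $L^\infty_{\mathrm{loc}}$, so $\mathcal{U}^1$ solves \eqref{E} without sign restriction and $u_n^1:=\mathcal{U}^1\circ T_n^{-1}$ is an extracted profile of the form \eqref{seq-un}. In the concentration regime, pass through the conformal change $\bar v_n:=(\tfrac{2}{1-|x|^2})^{(N-2)/2}\tilde u_n\in H_0^1(B(0,1))$, which is a PS sequence for the Euclidean functional of Section~\ref{pre}. Lions' concentration-compactness produces a scale $\eps_n\to 0$; the rescaling $W_n(z):=\eps_n^{(N-2)/2}\bar v_n(y_n+\eps_n z)$ converges weakly in $D^{1,2}(\R^N)$ to $W$ solving $-\Delta W = a(y_0)|W|^{2^*-2}W$, because the weight $\tilde\lambda(\tfrac{2}{1-|y_n+\eps_n z|^2})^2$ scales away while $a(y_n+\eps_n z)\to a(y_0)$ locally uniformly (using $a\in C^1$). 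Setting $V^1:=a(y_0)^{(N-2)/4}W$ yields a solution of \eqref{Talenti} with energy factor $a(y_0)^{(2-N)/2}J(V^1)$, and defines an Aubin--Talenti profile $v_n^1$ of the form \eqref{seq-vn}.

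Finally, iterate: subtract the extracted profile and verify via Brezis--Lieb and asymptotic orthogonality estimates that the residual remains a PS sequence for $\mathcal{I}_{\lambda,a,0}$ at strictly lower energy, with energy drop at least $\min\!\big\{\tfrac{1}{N}S_\lambda^{N/2},\; \tfrac{1}{N}\bar a^{-(N-2)/2}S^{N/2}\big\}>0$ (using $\bar a<\infty$). The iteration therefore terminates in finitely many steps, producing finite $n_1,n_2$ and an $\mathcal{H}$-strong remainder $o(1)$; collecting contributions gives the energy identity. Orthogonality conditions (1) and (2) follow by contradiction: if $T_n^j\circ (T_n^i)^{-1}(0)$ stayed bounded along a subsequence, passing to the limit in the residual composed with $T_n^j$ would identify $\mathcal{U}^j$ with a hyperbolic translate of $\mathcal{U}^i$ already subtracted, contradicting $\mathcal{U}^j\not\equiv 0$; similarly, boundedness of $|\log(\eps_n^k/\eps_n^l)|+|(y_n^k-y_n^l)/\eps_n^l|$ would merge two Euclidean bubbles into one. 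The separation between hyperbolic and Aubin--Talenti profiles is automatic, since the former sends its mass to infinity while the latter localizes near an interior point at an infinitesimal scale. \emph{The main obstacle} is to control $a$ through the whole iteration: showing $a\circ T_n\to 1$ in the correct topology along a hyperbolic bubble (exploiting the interplay between $a(x)\to 1$ at infinity and the M\"obius action of $\tau_{y_n}$), and that $a$ freezes to $a(y_0)$ at the Euclidean concentration scale, requires the delicate estimates advertised in the introduction, and is precisely where the present setup diverges substantially from the constant-coefficient analysis of \cite{BS}.
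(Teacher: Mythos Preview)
Your overall architecture—boundedness via the $\mathcal I-\tfrac{1}{2^*}\mathcal I'$ identity, Brezis--Lieb reduction to $\mathcal I_{\lambda,a,0}$, iterative profile extraction with a quantized energy drop, and orthogonality by contradiction—matches the paper's proof, and your treatment of the potential (that $a\circ T_n\to 1$ along escaping isometries while $a$ freezes to $a(y_0)$ at the Euclidean blow-up scale) is the right picture.

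There is, however, a genuine gap in your dichotomy. You split cases according to whether the mass-center $y_n$ stays bounded or escapes, and in the escaping case you \emph{assert} that $\tilde u_n\circ\tau_{y_n}\rightharpoonup\mathcal U^1\not\equiv 0$. But having $L^{2^*}$-mass localized in a geodesic ball $B_t(0)$ after pullback does not preclude a vanishing weak $H^1$-limit: the mass may still be concentrating at a scale $\eps_n\to 0$ near a point drifting with $y_n$, in which case one must extract an Aubin--Talenti bubble even though $d(y_n,0)\to\infty$. Your remark that ``the separation\dots is automatic'' presupposes the conclusion.

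The paper organizes the dichotomy differently and avoids this. It does \emph{not} decide first where $y_n$ goes; it pulls back by an isometry $T_n$ chosen (via the sets $A(x,r)=B(x,r)\cap\bn$ with $x\in S_r$, from \cite{BS}, and Lemma~\ref{l:pd1}) so that $\int_{A(x_0,\sqrt 3)}a_n|v_n|^{2^*}=\delta'$ with $\delta'$ fixed below the Sobolev threshold, and \emph{then} splits on whether the weak limit $v$ of $v_n=\tilde u_n\circ T_n$ is zero. If $v\not\equiv 0$ one proves a posteriori that $T_n(0)\to\infty$ and extracts a hyperbolic bubble. If $v\equiv 0$, the calibration $2\|a\|_\infty^{(N-2)/2}\delta'<S_\lambda^{2^*/(2^*-2)}$ forces $\int_{\{|x|>r\}}a_n|v_n|^{2^*}=o(1)$ for every $r>2-\sqrt 3$, so \emph{all} remaining mass is trapped in a fixed Euclidean ball; one cuts off, passes through the conformal change, and invokes the Euclidean PS characterization to peel off all Aubin--Talenti bubbles at once with centers $|y_n^k|\le 2-\sqrt 3$ as required by \eqref{seq-vn}. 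Your geodesic-ball scheme with a fixed radius does not deliver this compact localization, and your one-bubble-at-a-time Euclidean extraction would still need a device to keep the concentration centers inside a compact set. The fix is to replace your ``$y_n$ bounded/unbounded'' split by ``weak limit after pullback nonzero/zero'', and in the zero case to use a calibrated L\'evy-type concentration function before the conformal change.
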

Before we start to prove the above theorem, let us recall a definition and a lemma from \cite{BS}, which will be used to prove Theorem \ref{ps_decom}.

\begin{definition}
For $r>0$, define $S_r:=\{x\in\Rn: |x|^2=1+r^2\}$ and for $a\in S_r$, define $$A(a,r):=B(a,r)\cap\bn, $$
where $B(a,r)$ is the open ball in the Euclidean space with center $a$ and radius $r>0$.
Note that for the above choice of $a$ and $r$, $\partial B(a,r)$ is orthogonal to $S^{N-1}$.
\end{definition}

\begin{lemma}\label{l:pd1}\cite[Lemma 3.5]{BS}
Let $r_1,\, r_2>0$ and $A(a_i, r_i)$, $i=1,2$ be as in the above definition, then
there exists $\tau\in I(\bn)$ such that $\tau\big(A(a_1, r_1)\big)=A(a_2, r_2)$.
\end{lemma}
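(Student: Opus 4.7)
The plan is to exploit the fact that, under the hypothesis $|a_i|^2 = 1 + r_i^2$, the hypersurface $\Sigma_i := \partial B(a_i, r_i) \cap \bn$ is a totally geodesic hyperplane in the ball model of hyperbolic space, and that $A(a_i, r_i)$ is precisely one of the two open hyperbolic half-spaces it bounds — namely the one not containing $0$, since $|a_i|^2 = 1 + r_i^2 > r_i^2$ forces $0 \notin B(a_i, r_i)$. Thus the lemma reduces to the geometric assertion that $I(\bn)$ acts transitively on oriented totally geodesic half-spaces of $\bn$, and it suffices to exhibit, for each pair $(a_i, r_i)$, an isometry $\sigma_i \in I(\bn)$ carrying $A(a_i, r_i)$ to a common canonical model, say the half-ball $H_0 := \{x \in \bn : x_1 > 0\}$. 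Then $\tau := \sigma_2^{-1} \circ \sigma_1$ will be the desired isometry.

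To construct such a $\sigma_i$, I would proceed in three elementary moves, each an isometry of $\bn$. First, apply a Euclidean rotation $R \in O(N)$ (an isometry of $\bn$) to align $a_i$ with the positive $x_1$-axis, so that $R(a_i) = |a_i|\,e_1$ and $R(A(a_i,r_i)) = A(|a_i|e_1, r_i)$. Next, let $p := (|a_i| - r_i)\,e_1$; since $|a_i|-r_i = \sqrt{1+r_i^2}-r_i \in (0,1)$ we have $p \in \bn$, and $p$ lies on $\partial B(|a_i|e_1, r_i)$ (it is the Euclidean nearest point of that sphere to the origin). The hyperbolic translation $\tau_{-p}$ sends $p$ to $0$, so it carries the orthogonal spherical cap $R(\Sigma_i)$ to a totally geodesic hypersurface of $\bn$ passing through $0$; by the classification of totally geodesic hypersurfaces in the ball model, this image is a flat Euclidean disc $\{x \cdot \nu = 0\} \cap \bn$ for some unit vector $\nu$. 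Finally, a second Euclidean rotation sending $\nu$ to $e_1$, post-composed with the reflection $x_1\mapsto -x_1$ if needed to select the correct side, maps the image of $A(a_i,r_i)$ onto $H_0$. Composing these moves gives the required $\sigma_i$.

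The one non-trivial step in the above plan is verifying that in the second move the image $\tau_{-p}(R(\Sigma_i))$ is a Euclidean hyperplane through the origin rather than another orthogonal sphere. This is the main obstacle, and it rests on the conformal nature of hyperbolic isometries of the ball: $\tau_{-p}$ is, by \eqref{hyperbolictranslation}, the composition of the reflection $\rho_{-p^*}$ and the inversion $\sigma_{-p^*}$, both of which preserve the class of generalized spheres (Euclidean spheres and hyperplanes), and an inversion sends a sphere through its own center to a hyperplane. Equivalently, one invokes the standard fact that the totally geodesic hypersurfaces of the ball model are exactly the Euclidean hyperplanes through $0$ together with the Euclidean spheres orthogonal to $\partial \bn$, so a totally geodesic hypersurface containing $0$ must belong to the first class. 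Either justification, combined with the explicit construction above, completes the proof.
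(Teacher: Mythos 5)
Your proof is correct. Note first that the paper itself offers no argument for this lemma: it is quoted verbatim from \cite[Lemma 3.5]{BS}, so there is no internal proof to compare against, and a self-contained argument like yours is a genuine addition. Your reduction is the standard one and all the steps check out: the orthogonality condition $|a_i|^2=1+r_i^2$ makes $\partial B(a_i,r_i)\cap\bn$ a totally geodesic hypersurface and forces $0\notin B(a_i,r_i)$, so $A(a_i,r_i)$ is the half-space on the far side; the point $p=(|a_i|-r_i)e_1$ does lie in $\bn$ (since $0<\sqrt{1+r_i^2}-r_i<1$) and on the cap, so $\tau_{-p}$ carries the cap to a totally geodesic hypersurface through the origin, which by the classification of totally geodesic hypersurfaces in the ball model (a sphere orthogonal to $\partial\bn$ cannot contain $0$, as that would force $\rho^2=1+\rho^2$) must be a flat disc $\{x\cdot\nu=0\}\cap\bn$; and the final rotation plus optional reflection (both in $O(N)\subset I(\bn)$, which the paper takes to be the full isometry group, including orientation-reversing elements) lands you on the canonical half-ball, with the isometry mapping the two complementary components of $\bn\setminus\Sigma_i$ to the two half-balls so the side can always be corrected. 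The composite $\tau=\sigma_2^{-1}\circ\sigma_1$ then does the job. The only cosmetic remark is that your first justification of the key step (tracking generalized spheres through the reflection--inversion factorization of $\tau_{-p}$) is less clean than your second (isometries preserve total geodesy, and the classification pins down the image); the second alone suffices.
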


\noindent
{\bf Proof of Theorem \ref{ps_decom}:}
 	\begin{proof} We will prove the theorem in several steps.
  
	\begin{enumerate}[label = \textbf{Step \arabic*:}]
		\item  $\(u_n\)_n$ is a bounded PS sequence in $\mathcal{H}$.  More precisely, as $n\to\infty$
\begin{align*}
\beta+o(1)+o(1)\|u_n\|_\lambda&=\mathcal I_{\lambda,a,f}(u_n)-\frac{1}{2^*}\mathcal I'_{\lambda,a,f}(u_n)[u_n]\\
&=
\bigg(\f{1}{2}-\frac{1}{2^*}\bigg)\norm{u_n}_\lambda^2- \bigg(1-\f{1}{2^*}\bigg) \int_{\mathbb B^N}f(x)u_n(x) \mathrm{~d} V_{\bn}(x) \\
&\geq\f{1}{N}\norm{u_n}_\lambda^2-\frac{N+2}{2N}\|f\|_{H^{-1}(\bn)}\norm{u_n}_\lambda.
 	\end{align*}
  
From the above estimate, it follows that $(u_n)_n$ is a bounded sequence in $\mathcal{H}$, and thus up to a subsequence $u_n\rightharpoonup u $ in $\mathcal{H}$. Therefore, a standard computation yields $\mathcal I'_{\lambda,a,f}(u)=0$ , i.e., $u$ solves the problem  \eqref{Paf}. Set 
$$\bar{u}_n:=u_n-u.$$ Thus $\bar u_n\rightharpoonup 0$ in $\mathcal{H}$.
\medskip

  \item $(\bar{u}_n)_n$ is a PS sequence of $\mathcal I_{\lambda,a,0}$ at the level $\ba-\mathcal I_{\lambda,a,f}(u)$, i.e., 
   $$	\mathcal I_{\lambda,a,0}(\bar{u}_n)\to \ba-\mathcal I_{\lambda,a,f}(u) \qquad\text{and}\qquad	\mathcal I^{\prime}_{\lambda,a,0}(\bar{u}_n)\to 0 \quad\text{in}\quad \mathcal{H'}.$$
Indeed using the Brezis-Lieb lemma, a straightforward computation yields

\begin{align*}
\mathcal I_{\lambda,a,0}(\bar{u}_n)&=\f{1}{2}\norm{u_n-u}_\lambda^2- \f{1}{2^*}\int_{\mathbb B^N}a(x)|u_n-u|^{2^*}\mathrm{~d} V_{\bn} \\
&=\f{1}{2}\|u_n\|^2_{\la}-\f{1}{2}\|u\|^2_{\la}-\f{1}{2^*}\int_{\mathbb B^N}a(x)|u_n|^{2^*}\mathrm{~d} V_{\bn}+\f{1}{2^*}\int_{\mathbb B^N}a(x)|u|^{2^*}\mathrm{~d} V_{\bn}\\
&\qquad+\f{1}{2^*}\int_{\mathbb B^N}a(x)\big(|u_n|^{2^*}-|u|^{2^*}-|u_n-u|^{2^*}\big)\mathrm{~d} V_{\bn}\\
&\qquad+\int_{\mathbb B^N}f(x)u_n\mathrm{~d} V_{\bn}-\int_{\mathbb B^N}f(x)u\mathrm{~d} V_{\bn}+o(1)\\
&=\mathcal I_{\lambda,a,f}(u_n)-\mathcal I_{\lambda,a,f}(u)+o(1).\\
\end{align*}
On the other hand, for any $\psi\in C^\infty_c(\bn)$
\begin{align*}
\mathcal I^{\prime}_{\lambda,a,0}(\bar{u}_n)[\psi]&= \langle \bar u_n,\psi\rangle_{\lambda}-\int_{\bn}a(x)|\bar u_n|^{2^*-2}\bar u_n\psi \dvg\\
&\leq o(1)+\|a\|_{\I}\|\psi\|_{\I}\int_{supp(\psi)}|\bar u_n|^{2^*-1}\dvg=o(1).
\end{align*} 
Hence step 2 follows.

\medskip

\item Since $(\bar{u}_n)_n$ is uniformly bounded in $\mathcal{H}$, by step 2, $\mathcal I'_{\lambda,a,0}(\bar{u}_n)[\bar{u}_n]=o(1)$, i.e., 
 \begin{align*}
 \norm{\bar{u}_n }_\la^2=\int_{\mathbb B^N}a(x)|\bar{u}_n|^{2^*}\mathrm{~d} V_{\bn}(x)+o(1).
 \end{align*}
Suppose $\bar{u}_n\to 0$ in $\mathcal{H}$; then we are done with $n_1=0=n_2$.  If not, there exists $\de>0$ such that
  \begin{align*}
  	\liminf_{n\to\I}\int_{\mathbb B^N}a(x)|\bar{u}_n|^{2^*}\mathrm{~d} V_{\bn}(x)> \de.
  \end{align*}
  We choose  $\delta'>0$ in the following way and fix it, 
$$2\norm{a}_{\I}^\f{(N-2)}{2}\delta'<\de<S_\la^{\f{2^*}{2^*-2}}.$$
Next, we define the concentration function as in \cite[pg. 254]{BS}, namely

   \be
   Q_n(r):=\sup_{x\in S_r}\int_{A(x,r)}a(x)|\bar{u}_n|^{2^*}\mathrm{~d} V_{\bn}(x)\nonumber.
   \ee

Thus
 \be
 \lim_{r\to 0}Q_n(r)=0
\quad \text{and} \quad \lim_{r\to \I}Q_n(r)>\de'.\nonumber\ee
Consequently, we can find a large $r$ say $r_\I$ such that $Q_n(r)>\f{\de'}{2}$ for all $r>r_\I$. 
  \be
  \sup_{x\in S_r}\int_{A(x,r)}a(x)|\bar{u}_n|^{2^*}\mathrm{~d} V_{\bn}(x)= \int_{A(x_n,r_n)}a(x)|\bar{u}_n|^{2^*}\mathrm{~d} V_{\bn}(x)=\delta' \nonumber.
  \ee
Fix  $x_0\in S_{\sqrt{3}}$ and using Lemma~\ref{l:pd1} choose $T_n\in I(\mathbb B^N)$ such that $A(x_n,r_n)=T_nA(x_0,\sqrt{3})$ for all $n$. Define $$v_n(x):=(\bar{u}_n\circ T_n)(x)\quad\text{and}\quad a_n(x):=(a \circ T_n)(x).$$ 
  Therefore,  $(v_n)_n$ is also a bounded sequence, $\norm{v_n}_{\la}=\norm{\bar{u}_n}_{\la}$ and  $v_n\rightharpoonup v$ in $\mathcal{H}$ for some $v\in \mathcal{H}$. Further,
  \begin{align}\label{1c'}
& \mathcal I_{\lambda,a_n,0}(v_n) \no\\
  &= \f{1}{2}\norm{v_n}_\lambda^2- \f{1}{2^*}\int_{\mathbb B^N}a_n(x)|v_n|^{2^*}\mathrm{~d} V_{\bn}(x)\no\\
  &= \f{1}{2}\norm{\bar{u}_n}_\lambda^2- \f{1}{2^*}\int_{\mathbb B^N}a_n(x)|v_n|^{2^*}\mathrm{~d} V_{\bn}(x)\no\\
  &=\ba-\mathcal I_{\lambda,a,f}(u)+o(1)+\f{1}{2^*}\int_{\mathbb B^N}a(x)|\bar{u}_n|^{2^*}\mathrm{~d} V_{\bn}(x)\no\\
  &\quad- \f{1}{2^*}\int_{\mathbb B^N}a_n(x)|v_n|^{2^*}\mathrm{~d} V_{\bn}(x)\no\\
  &= \ba-\mathcal I_{\lambda,a,f}(u)+o(1). 
  \end{align}
We denote $T^{-1}_n$ by $T_{-n}$. Consequently, $T_n\circ T_{-n}= I_d$. 
Let $\Psi\in \mathcal{H}$ be arbitrary and define $\Psi_n:=\Psi\circ T_{-n}$, which implies $\norm{\Psi_n}_\la=\norm{\Psi}_\la$ for all $n$. Therefore, by step 2, we get
 \begin{align}\label{1b'}
o(1)&=\mathcal I^\prime_{\lambda,a,0}(\bar{u}_n)[\Psi_n]= \langle  \bar{u}_n, \Psi_n\rangle_\lambda -  \int_{\mathbb B^N}a(x)|\bar{u}_n|^{2^*-2}\bar{u}_n\Psi_n \mathrm{~d} V_{\bn}(x) \no\\
&=\langle  \bar{u}_n\circ T_n, \Psi_n\circ T_n\rangle_\lambda -  \int_{\mathbb B^N}a(x)|\bar{u}_n|^{2^*-2}\bar{u}_n\Psi_n \mathrm{~d} V_{\bn}(x) \no\\
&=\langle  v_n, \Psi\rangle_\lambda -  \int_{\mathbb B^N}a_n(x)|v_n|^{2^*-2}v_n\Psi \mathrm{~d} V_{\bn}(x)  \no\\
&=\mathcal I^\prime_{\lambda,a_n,0}(v_n)[\Psi].
\end{align}
Moreover, observe 
  \begin{align*}
  &\int_{A(x_0,\sqrt{3})}a_n(x)|v_n|^{2^*}\mathrm{~d} V_{\bn}(x)=\int_{A(x_0,\sqrt{3})}(a\circ T_n)(x)|\bar{u}_n\circ T_n|^{2^*}\mathrm{~d} V_{\bn}(x)\\
  &=\int_{A(x_n,r_n)}a(x)|\bar{u}_n|^{2^*}\mathrm{~d} V_{\bn}(x)=\de'.
   \end{align*}
We now study the following two cases: $v\equiv0$ and $v \not\equiv 0$.
  
\medskip

  \item Firstly, let us assume that $v\equiv0.$
   First, we prove that for any $1>r>2-\sqrt{3}$
  \be
 \int_{\mathbb B^N\cap \{|x|>r\}}a_n(x)|v_n|^{2^*}\mathrm{~d} V_{\bn}(x)\leq \norm{a}_{\I} \int_{\mathbb B^N\cap\{ |x|>r\}}|v_n|^{2^*}\mathrm{~d} V_{\bn}(x)=o(1). \label{1a}
  \ee
 
 Let us fix  a point $b\in S_{\sqrt{3}}$. Also, let $\phi\in C_c^\I(A(b,\sqrt{3}))$ such that $0\leq \phi\leq 1$. 
 We choose  $\Psi\in \mathcal{H}$ as $\Psi=\phi^2v_n$, and thanks to \eqref{1b'}, it holds
\be \langle  v_n,\phi^2v_n\rangle_\lambda =  \int_{\mathbb B^N}a_n(x)|v_n|^{2^*}\phi^2 \mathrm{~d} V_{\bn}(x)+o(1)\no.
\ee
Using local compact embedding, we can simplify the LHS of the above expression as 
\be \langle  v_n,\phi^2v_n\rangle_\lambda =   \norm{\phi v_n}_\la^2+o(1), \no
\ee
(\cite[pg. 255]{BS}).
Therefore, 
\begin{align*} 
\norm{\phi v_n}_\la^2 &=  \int_{\mathbb B^N}a_n(x)|v_n|^{2^*}\phi^2 \mathrm{~d} V_{\bn}(x)+o(1)\\
&= \int_{\mathbb B^N} a_n(x)|v_n|^{2^*-2}(v_n\phi  )^2 \mathrm{~d} V_{\bn}(x)+o(1).
\end{align*}
Now using Poincar\'e-Sobolev inequality on the LHS and H\"older inequality on the RHS, we obtain the following
\begin{align*}
& S_{\la}\(\int_{\mathbb B^N}|\phi v_n|^{2^*}\mathrm{~d} V_{\bn}\)^{\f{2}{2^*}} \\
& \leq \norm{a}^\f{N-2}{N}_{\I}\(\int_{\mathbb B^N}|\phi v_n|^{2^*}\mathrm{~d} V_{\bn}\)^{\f{2}{2^*}}\(\int_{A(b,\sqrt{3})}a_n(x)| v_n|^{2^*}\mathrm{~d} V_{\bn}\)^{\f{2^*-2}{2^*}}.
\end{align*}
 Suppose $\int_{\mathbb B^N}|\phi v_n|^{2^*}\mathrm{~d} V_{\bn}\not\to0$ as $n \rightarrow \I$, then we have 
   \be
   S_{\la} \leq \norm{a}_{\I}^\f{N-2}{N} \(\int_{A(b,\sqrt{3})}a_n(x)| v_n|^{2^*}\mathrm{~d} V_{\bn}\)^{\f{2^*-2}{2^*}}\leq\norm{a}^\f{N-2}{N}_{\I}\(\de'\)^{\f{2^*-2}{2^*}}.
   \nee
  
Combining this with the choice of $\delta'$ yields that
   \be
 \norm{a}^\f{N-2}{N}_{\I}\(\de'\)^{\f{2^*-2}{2^*}}= \(\norm{a}_{\I}^\f{(N-2)}{2}\de'\)^{\f{2^*-2}{2^*}}< S_{\la} \leq  \norm{a}^\f{N-2}{N}_{\I}\(\de'\)^{\f{2^*-2}{2^*}},
   \nee
which is a contradiction. Thus  $\int_{\mathbb B^N}|\phi v_n|^{2^*}\mathrm{~d} V_{\bn}\to0$. Since $b\in S_{\sqrt{3}}$ is arbitrary, \eqref{1a} follows. 

 Further, we fix $2-\sqrt{3}<r<R<1$ and choose $\theta \in C_c^{\infty}\left(\mathbb{B}^N\right)$ such that $0 \leq \theta \leq 1,\,  \theta(x)=1$ for $|x|<r$ and $\theta(x)=0$ for $|x|>R$. Then define
 $$\bar{v}_n:=\theta v_n.$$  
As $v_n\rightharpoonup 0$, using \eqref{1a}, a straight forward computation shows that $\bar{v}_n$ is again a PS sequence of  $\mathcal I_{\lambda,a,0}$. Also, note that arguing in the same way, it also holds $\mathcal I_{\lambda,a_n,0}'(\bar v_n)=o(1)$.  \\
Next, we claim: $v_n= \bar{v}_n+ o(1)$ where $o(1) \rightarrow 0$ in $\mathcal{H}.$ 

\noi
To prove the claim, we first show that $\|v_n\|^2_{\la}=\|\bar v_n\|^2_{\la}+o(1)$.
Indeed, by \eqref{1c'}--\eqref{1a}, it follows
\begin{align}\label{1a'}
\|v_n\|^2_{\la}=\int_{\mathbb{B}^N}a_n(x)|v_n|^{2^*}\dvg+o(1)
&=\int_{\mathbb{B}^N \cap\{|x|\leq r\}}a_n(x)|v_n|^{2^*}\dvg+o(1)\no\\
&=\int_{\mathbb{B}^N \cap\{|x|\leq r\}}a_n(x)|\bar v_n|^{2^*}\dvg+o(1)\no\\
&\leq\int_{\mathbb{B}^N}a_n(x)|\bar v_n|^{2^*}\dvg+o(1)\no\\
&= \|\bar v_n\|^2_{\la}+o(1).
\end{align}
\noi
Conversely,
\begin{align}\label{1f'}
\qquad\|\bar v_n\|^2_{\la}=\int_{\mathbb{B}^N}a_n(x)|\bar v_n|^{2^*}\dvg+o(1)
&=\int_{\mathbb{B}^N \cap\{|x|\leq R\}}a_n(x)\theta^{2^*}|v_n|^{2^*}\dvg+o(1)\no\\
&\leq \int_{\mathbb{B}^N \cap\{|x|\leq R\}}a_n(x)| v_n|^{2^*}\dvg+o(1)\no\\
&=\int_{\mathbb{B}^N}a_n(x)|v_n|^{2^*}\dvg+o(1)\no\\
&= \| v_n\|^2_{\la}+o(1),
\end{align}
where, the second equality from the last  follows from \eqref{1a} as $2-\sqrt{3}<r<R<1$. 
Consequently, \eqref{1a'} and \eqref{1f'} together yield $\|v_n\|^2_{\la}=\|\bar v_n\|^2_{\la}+o(1)$. 
Therefore,
\begin{align*}
 \qquad&\|v_n- \bar{v}_n\|^2_{\la}\\
&=\|v_n\|^2_{\la}+\|\bar v_n\|^2_{\la}-2\langle v_n, \bar v_n\rangle_\la \\
&=2\bigg[\|v_n\|^2_{\la}-\int_{\mathbb{B}^N}a_n(x)|v_n|^{2^*-2}v_n\bar v_n \dvg\bigg]+o(1) \\
&=2\bigg[\|v_n\|^2_{\la}-\int_{\mathbb{B}^N}a_n(x)\theta|v_n|^{2^*}\dvg\bigg]+o(1)\\
&\leq 2\bigg[\|v_n\|^2_{\la}-\int_{\mathbb{B}^N\cap\{|x|\leq r\}}a_n(x)|v_n|^{2^*}\dvg\bigg]+o(1)\\
&=2\bigg[\|v_n\|^2_{\la}-\int_{\mathbb{B}^N}a_n(x)|v_n|^{2^*}\dvg\bigg]+2\int_{\mathbb{B}^N\cap\{|x|> r\}}a_n(x)|v_n|^{2^*}\dvg +o(1)\\
&=2\mathcal{I}'_{\la,a_n,0}(v_n)[v_n]+o(1)=o(1).
\end{align*}
Hence the claim follows. 
   
Now, let us consider a conformal change of the metric from the hyperbolic to the Euclidean metric. Define 
   \be \label{con-ps}
   \tilde v_n:=\Big(\f{2}{1-|x|^2}\Big)^\f{N-2}{2}\bar{v}_n.
   \ee
  Then $\tilde v_n\in H^1_0(B(0,R))$ for some $R>0$, and $\tilde v_n$ is a PS sequence for the problem
\be\label{con-ecl-eq}
-\De u -  c(x) u=  a(x) \abs{u}^{2^*-2}u, \,\,\,\, u\in H^1_0(B(0,R)),  
\ee
where $c(x)=\f{4\la-N(N-2)}{(1-|x|^2)^2},$ which is a smooth bounded function. 

Let $y_{n}\in B(0,R)$. Define $w_{r,n}(x):=r^\f{N-2}{2}\,u(rx+y_{n})$. By a simple calculation, we can see that if $u$ solves the problem \ef{con-ecl-eq}, then $w_{r,n}$ solves the following problem
\be\label{ecl-scal-eq}
-\De w_{r,n} -  r^2c(rx+y_{n}) w_{r,n}=  a(rx+y_{n}) \abs{w_{r,n}}^{2^*-2}w_{r,n}, \;\; u\in H^1_0\bigg(\f{1}{r}(B\big(0,R)-y_{n}\big)\bigg).  
\ee
It is easy to see that as $r\to 0$, the limiting problem associated with  \ef{ecl-scal-eq} is 
\be\label{lim-scal-eq}\tag{$ P_c$}
-\De w =  a(\tilde{y}) \abs{w}^{2^*-2}w,  \,\,\,\, w\in D^{1,2}(\mathbb{R}^N).  
\ee
where $y_n \rightarrow \tilde{y}$.\\
\noi
Using the change of variable $W(x):=a(\tilde{y})^{(\frac{N-2}{4})}w(x)$, it follows that $W$ solves 
\be\label{lim-pro}\tag{$ P_\I$}
-\De W =  \abs{W}^{2^*-2}W.
\ee

Now we will construct a PS sequence of \ef{con-ecl-eq} using the solutions of \ef{lim-scal-eq}. Then we will use this PS sequence with conformal change to construct a PS sequence of $\mathcal{I}_{\la, a, 0}$. 

Let $W$ and $w$ be the positive solutions to the problem \ef{lim-pro} and \ef{lim-scal-eq}, respectively.
Define 
\be\label{wnk}
w_{n}(x):=\eps_n^{\f{2-N}{2}} a(\tilde{y})^{\frac{2-N}{4}}W\(\f{x-y_{n}}{\eps_n}\)\phi(x),
\ee
where $\epsilon_n>0,|y_{n}| \leq 2- \sqrt{3}<r<R,\; \epsilon_n \rightarrow 0,\; y_{n} \rightarrow \tilde{y}$ as $n \rightarrow \infty$,
and $\phi \in C_c^{\infty}\left(\mathbb{B}^N\right)$ such that $0 \leq \phi \leq 1,\; \phi(x)=1$ for $|x|<r$ and $\phi(x)=0$ for $|x|>R$. \\

\textbf{Claim I}: $(w_{n})_n$ is a PS sequence corresponding to the functional associated with \ef{con-ecl-eq}.
To prove the above claim, we denote the energy functional corresponding to \ef{con-ecl-eq} by $I$, defined as follows
\be
I(u)=\f{1}{2}\int_{B(0,R)}\big(|\grad u|^2-c(x)u^2\big)\mathrm{~d} x-\f{1}{2^*}\int_{B(0,R)}a(x)|u|^{2^*}\mathrm{d}x.
\nee

\noi
Thus for $\psi\in H^1_0(B(0,R))$, 
\begin{align}\label{l:new est1}
	 \qquad \quad & I'(w_{n})[\psi]= \int_{B(0,R)}(\grad w_{n}\grad \psi-c(x)w_{n}\psi)\mathrm{~d} x-\int_{B(0,R)}a(x)|w_{n}|^{2^*-2}w_{n}\psi\, \mathrm{~d} x\no\\
	 &=\int_{B(0,R)} (-\psi\De w_{n} -c(x)w_{n}\psi)\mathrm{~d} x-\int_{B(0,R)}a(x)|w_{n}|^{2^*-2}w_{n}\psi\,\mathrm{~d} x\no\\
	 &=-\eps_n^{\f{2-N}{2}} \int_{B(0,R)}\bigg[\eps_n^{-2}\phi\psi(x) \De w\( \f{x-y_{n}}{\eps_n}\)+\psi(x)  w\( \f{x-y_{n}}{\eps_n}\)\De \phi\no\\
  &+2\eps_n^{-1}\psi(x) \grad w\( \f{x-y_{n}}{\eps_n}\)  \grad\phi\bigg]\mathrm{~d} x -\int_{B(0,R)}\bigg[c(x)w_{n}\psi+a(x)|w_{n}|^{2^*-2}w_{n}\psi\bigg]\,\mathrm{~d} x.
\end{align}

Set $x=y\eps_n+y_{n}$, $\Om=B(0,R)$ and $\Om_{n}:=\f{\Om-y_{n}}{\eps_{n}}$. Then we estimate the 4th term on the RHS of \eqref{l:new est1} as
\begin{align*}
	&\int_{B(0,R)}c(x)w_{n}\psi(x) \mathrm{~d} x=\eps^N_n\int_{\Om_{n}}c(y\eps_n+y_n) w_{n}(y\eps_n+y_{n})\psi(y\eps_n+y_{n}) \mathrm{~d} y\\
	&=  \eps_n^{\f{2+N}{2} }\int_{\Om_{n}}c(y\eps_n+y_{n}) \psi(y\eps_n+y_{n})\phi(y\eps_n+y_{n})w(y) \mathrm{~d} y\\
	&= \eps_n^{\f{N-2}{2}}\,\eps^2_n\,\int_{\Om_{n}}c(y\eps_n+y_{n}) \psi(y\eps_n+y_{n})\phi(y\eps_n+y_{n})w(y) \mathrm{~d} y.
\end{align*}
Now using H\"older inequality, we derive (set $z=y\eps_n+y_{n}$)
\begin{align*}
	\qquad&|\int_{B(0,R)}c(x)w_{n}\psi(x) \mathrm{~d} x|\\
 &\leq  \eps_n^{\f{N-2}{2}}\,\eps^2_n\,\int_{\Om_{n}}|c(y\eps_n+y_{n})| |\psi(y\eps_n+y_{n})||\phi(y\eps_n+y_{n})||w(y)|\mathrm{d} y\\
&	\leq \eps_n^{\f{N-2}{2}}\,\eps^2_n\,\(\int_{\Om_{n}}|c(y\eps_n+y_{n})\psi(y\eps_n+y_{n})\phi(y\eps_n+y_{n})|^2\mathrm{d} y\)^\f{1}{2} \(\int_{\Om_{n}}|w(y)|^2\mathrm{d} y\)^\f{1}{2}\\
&=\eps_n\(\int_{\Om}|c(z)\psi(z)\phi(z)|^2\mathrm{d} y\)^\f{1}{2} \(\int_{\Om_{n}}|w(y)|^2\mathrm{d} y\)^\f{1}{2}\\
&\leq C\eps_n \(\int_{\R^N}|w(y)|^2\mathrm{d} y\)^\f{1}{2}=o(1).
\end{align*}
\noi
Now let us estimate the 2nd and 3rd terms of \eqref{l:new est1}
\begin{align*}
\qquad \quad&\eps_n^{\f{2-N}{2}}\int_{B(0,R)}\bigg(|\psi(x)  w\( \f{x-y_{n}}{\eps_n}\)\De \phi|+2\eps_n^{-1}|\psi(x) \grad w\( \f{x-y_{n}}{\eps_n}\)  \grad\phi|\bigg)\mathrm{~d} x\\
\qquad \quad &\leq C\eps_n^{\f{N-2}{2}}\,\eps^2_n\,\int_{\Om_{n}}\bigg(|\psi(y\eps_n+y_{n})  w\( y\)| +2\eps_n^{-1}|\psi(y\eps_n+y_{n}) \grad w\( y\)  \grad\phi(y\eps_n+y_{n})|\bigg)\mathrm{d} y.
\end{align*}
As before, we can see that $C\eps_n^{\f{N-2}{2}}\,\eps^2_n\,\displaystyle\int_{\Om_{n}}|\psi(y\eps_n+y_{n})  w\( y\)\mathrm{d} y|=o(1)$. Also, using similar arguments, we can verify that $$C\eps_n^{\f{N-2}{2}}\,\eps_n\,\displaystyle\int_{\Om_{n}}|\psi(y\eps_n+y_{n}) \grad w\( y\)  \grad\phi(y\eps_n+y_{n})|\mathrm{d} y=o(1).$$
\noi
Next, we estimate the first term and last term in \eqref{l:new est1} as
\begin{align}\label{l:new est2}
	&-\eps_n^{-\f{N+2}{2}}\int_{B(0,R)}\phi(x)\psi(x) \De w\( \f{x-y_{n}}{\eps_n}\)\mathrm{~d} x\no\\
	&=-\eps_n^{\f{N-2}{2}}\int_{ \Om_{n}}\phi(y\eps_n+y_{n})\psi(y\eps_n+y_{n}) \De w\( y\) \mathrm{d} y \no\\
	&=\eps_n^{\f{N-2}{2}}  \int_{ \Om_{n}}a(\tilde{y})\phi(y\eps_n+y_n)\psi(y\eps_n+y_{n})  |w|^{2^*-2}w(y) \mathrm{d} y,
\end{align}
and 
\begin{align}\label{l:new est3}
&	- \int_{B(0,R)}a(x)|w_{n}|^{2^*-2}w_{n}\psi \mathrm{~d} x\no\\
&=-\eps_n^N\int_{\Om_{n}}a(y\eps_n+y_{n})|w_{n}(y\eps_n+y_{n})|^{2^*-2}w_{n}(y\eps_n+y_{n})\psi(y\eps_n+y_{n})\mathrm{~d} y\no\\
&=-\eps_n^N\eps_n^{\f{2-N}{2}(2^*-1)}\int_{\Om_{n}}a(y\eps_n+y_{n})|w(y)|^{2^*-2}w(y)\psi(y\eps_n+y_{n})\phi^{2^*-1}(y\eps_n+y_{n})\mathrm{~d} y\no\\
&=- \eps_n^{\f{N-2}{2}} \int_{\Om_{n}}a(y\eps_n+y_{n})|w(y)|^{2^*-2}w(y)\psi(y\eps_n+y_{n})\phi^{2^*-1}(y\eps_n+y_{n})\mathrm{~d} y.
\end{align}
Therefore, to complete the proof of claim, we only need to show \eqref{l:new est2}+\eqref{l:new est3}  is $o(1)$. Indeed,
\begin{align*}
	\qquad \quad \eps_n^{\f{N-2}{2}} &\left[\int_{ \Om_{n}}a(\tilde{y})\phi(y\eps_n+y_{n})\psi(y\eps_n+y_{n})  |w|^{2^*-2}w\mathrm{~d} y\right.\\
  \qquad \quad  &-\left. \int_{\Om_{n}}a(y\eps_n+y_{n})|w(y)|^{2^*-2}w(y)\psi(y\eps_n+y_{n})\phi^{2^*-1}(y\eps_n+y_{n})\mathrm{~d} y\right]\\
   \qquad \quad \leq \eps_n^{\f{N-2}{2}}&\int_{ \Om_{n}} \psi(y\eps_n+y_{n})|w(y)|^{2^*-2}w(y) \left[a(\tilde{y})-a(y\eps_n+y_{n})\phi^{2^*-1}(y\eps_n+y_{n})\right] \mathrm{~d} y\\
  \qquad \quad  \leq \eps_n^{\f{N-2}{2}}&\int_{ \Om_{n}}\biggl[\psi(y\eps_n+y_{n})|w(y)|^{2^*-2}w(y)\biggr.\\
\qquad \quad &\hspace{2cm}\biggl.\left[a(\tilde{y})- \left[a(y_{n})+ y \eps_n a'(y_{n})+ o(|\eps_n y|) \right]\phi^{2^*-1}(y\eps_n+y_{n})\right]\biggr] \mathrm{d} y\\
   \qquad \quad \leq \eps_n^{\f{N-2}{2}} &\left[\int_{ \Om_{n}} \psi(y\eps_n+y_{n})|w(y)|^{2^*-2}w(y) \left[a(\tilde{y})- a(y_{n})\phi^{2^*-1}(y\eps_n+y_{n})\right]\right.\\
 \qquad \quad   &-\psi(y\eps_n+y_{n})|w(y)|^{2^*-2}w(y) y \eps_n a'(y_{n}) \phi^{2^*-1}(y\eps_n+y_{n})\\
 \qquad \quad   & \left.- \psi(y\eps_n+y_{n})|w(y)|^{2^*-2}w(y) o(|\eps_n y|) \phi^{2^*-1}(y\eps_n+y_{n})\mathrm{~d} y\right]\\
   = o(1),
   \end{align*}
we get the integral to be o(1) by considering $y_{n} \rightarrow \tilde{y}$ and  substituting the well known expression of $w.$  This completes the proof of the claim.  
Now it follows from the PS sequence characterization of \eqref{con-ecl-eq} that 
$$\tilde v_n=\sum_{k=1}^{n_2}w_n^k+o(1),$$
where $w_n^k$ is of the form \eqref{wnk}, where $\eps_n$ is replaced by $\eps_n^k$, $y_n$ is replaced by $y_n^k$ and $y_n^k \rightarrow y^k$.
Therefore,

$$v_n=\bar v_n+o(1)=\Big(\f{2}{1-|x|^2}\Big)^{-\f{N-2}{2}}\tilde v_n+o(1)=\sum_{k=1}^{n_2}v_n^k+o(1),$$
where $v_n^k=\Big(\f{2}{1-|x|^2}\Big)^{-\f{N-2}{2}}w_n^k$.  Hence the theorem follows in this case.

\medskip

\item Now consider the remaining possibility, i.e., $v \not\equiv $ 0.
  
Note that in this case, we have $T_n^{-1}(0) \rightarrow \infty$. To see this, let $\phi \in C_{c}^{\infty}\left(\mathbb{B}^{N}\right)$ and define $\phi_{n}(x) = \phi\circ T_n^{-1}(x)$. Then
\begin{align*}
		0 & \neq \left\langle v,\phi \right\rangle_{\lambda} \\ 
  & =\lim _{n \rightarrow \infty}\left\langle v_{n},\phi \right\rangle_{\lambda} \\ 
&= \lim _{n \rightarrow \infty}\left\langle \bar{u}_n\circ T_n,\phi \right\rangle_{\lambda}\\ 
& = \lim _{n \rightarrow \infty} \int_{\mathbb{B}^{N}} \langle\nabla_{\mathbb{B}^{N}}( \bar{u}_n\circ T_n), \nabla_{\mathbb{B}^{N}} \phi\rangle  \mathrm{~d} V_{\mathbb{B}^{N}} -\lambda\int_{\mathbb{B}^{N}} (\bar{u}_n\circ T_n) \phi \mathrm{~d} V_{\mathbb{B}^{N}}\\ 
& = \lim _{n \rightarrow \infty} \int_{\mathbb{B}^{N}}\langle \nabla_{\mathbb{B}^{N}} \bar{u}_n, \nabla_{\mathbb{B}^{N}} \phi_{n}\rangle \mathrm{~d} V_{\mathbb{B}^{N}} -\lambda\int_{\mathbb{B}^{N}} \bar{u}_n \phi_{n} \mathrm{~d} V_{\mathbb{B}^{N}}\\ 
& = \lim _{n \rightarrow \infty} \int_{\mathbb{B}^{N}} a(x)|\bar{u}_n|^{2^*-2}\bar{u}_n\phi_n \mathrm{~d} V_{\mathbb{B}^{N}}.
\end{align*}

Now if $T_n^{-1}(0) \nrightarrow \infty$ then there exists an $R>0$ such that $supp(\phi_n) \subseteq B_{R}(0)$. Then using Vitali's convergence theorem and the fact that $\bar{u}_n \rightharpoonup 0$ in the last step of the above computation yields a contradiction.

Without any confusion, we assume that $T_n$ is an M\"obius transformation of $\B^N$. Choose $y_n\in\B^N$ such that $T_n(y_n)=0$. In the spirit of \cite[Section 2]{MR2542095}[i.e. $T_{-a}=-\tau_a$], we can consider $T_n=\tau_{y_n}A_{n}$, where $\tau_{y_n}$ is a hyperbolic translation with $\tau_{y}(0)=y$ and $A_n\in O(N).$ We now argue that $T_n(0)\to\I$ as $n\to\I$. We have already seen that $T^{-1}_n(0)\to\I$ as $n\to\I$. Since $T^{-1}_n(0)=y_n$, so $y_n\to\infty$. As a result,  $T_n(0)=\tau_{y_n}A_n(0)=\tau_{y_n}(0)=y_n\to\infty.$
  
 Next, we show that $v$ is a solution of \eqref{E} (without sign condition). Using \eqref{1b'}, for any $\phi \in C_c^{\infty}\left(\mathbb{B}^{N}\right)$, it holds
 $$\left\langle v,\phi \right\rangle_{\lambda}=  \lim _{n \rightarrow \infty} \int_{\mathbb{B}^{N}} a(T_nx)|{v}_n|^{2^*-2}{v}_n\phi \mathrm{~d} V_{\mathbb{B}^{N}}.$$\\
Consider \begin{align*}
&\int_{\mathbb{B}^{N}}a(T_nx)|{v}_n|^{2^*-2}{v}_n\phi\dvg-\int_{\mathbb{B}^{N}}  |{v}|^{2^*-2}{v}\phi\dvg \\ &=\underbrace{\int_{\mathbb{B}^{N}}[a(T_nx)-1]|{v}|^{2^*-2}{v}\phi\dvg}_{J_1}+\underbrace{\int_{\mathbb{B}^{N}}a(T_nx) [|{v}_n|^{2^*-2}{v}_n-|{v}|^{2^*-2}{v}]\phi\dvg}_{J_2}.
\end{align*} 
Both $J_1$ and $J_2$ are of $o(1)$. The former follows from dominated convergence theorem (since, by \eqref{hyperbolictranslation} and the fact that $y_n\to\infty$, it follows that $\tau_{y_n}(x)\to \infty$ as $n\to\infty$, and therefore $a_n(x)\to 1$ for all $x\in\bn$). The latter one is from Vitali's convergence theorem. Hence, $v$ is a solution of \eqref{E}.\\
\noi
Now define,
$$z_n(x):=\bar{u}_n(x)- v(T_n^{-1}x).$$
\textbf{Claim II:} $\left(z_n\right)_n$ is a $(P S)$ sequence for $\mathcal{I}_{\la,a,0}$ at the level $\beta-\mathcal{I}_{\la,a, f}(u)-\mathcal{I}_{\la,1,0}(v)$.
To prove the claim, set
$$
\tilde{z}_n(x):= z_n\left(T_n x\right).
$$
Then
\begin{equation*}
\tilde{z}_n(x)=v_n(x)-v(x) \quad \text { and } \quad\left\|\tilde{z}_n\right\|_\la=\left\|v_n-v\right\|_\la=\left\|z_n\right\|_\la .
\end{equation*}
Therefore, using the above relations, as $n \rightarrow \infty$, we deduce

\begin{equation*}
\begin{aligned}
 \hspace{1cm}&\mathcal{I}_{\la, a, 0}\left(z_n\right)\\
 &=\frac{1}{2}\left\|z_n\right\|_\la^2-\frac{1}{2^{\star}} \int_{\mathbb{B}^N} a(x) \left|z_n\right|^{2^*} \mathrm{~d} V_{\mathbb{B}^{N}}\\
& =\frac{1}{2}\left\|v_{n}-v\right\|_\la^2-\frac{1}{2^{\star}} \int_{\mathbb{B}^N} a\left(T_n x\right) \left|v_{n}-v\right|^{2^*}  \mathrm{~d} V_{\mathbb{B}^{N}} \\
& =\frac{1}{2}\left(\left\|v_{n}\right\|_\la^2-\|v\|_\la^2\right)-\frac{1}{2^{\star}} \int_{\mathbb{B}^N} a\left(T_n x\right) \left|v_{n}(x)\right|^{2^*} \mathrm{~d} V_{\mathbb{B}^{N}}+\frac{1}{2^{\star}} \int_{\mathbb{B}^N} |v|^{2^*} \mathrm{~d} V_{\mathbb{B}^{N}}+o(1) \\
& =\mathcal{I}_{\la, a, 0}\left(\bar{u}_n\right)-\mathcal{I}_{\la,1,0}(v)+o(1) \\
& =\beta-\mathcal{I}_{\la,a, f}({u})-\mathcal{I}_{\la,1,0}(v)+o(1),
\end{aligned}
\end{equation*}
where the third equality follows from the subsequent calculations 
\begin{equation*}
\begin{aligned}
&\int_{\mathbb{B}^N} a\left(T_n x\right) \left|v_{n}(x)\right|^{2^*}\mathrm{~d} V_{\mathbb{B}^{N}}-\int_{\mathbb{B}^N} |v|^{2^*}(x)\mathrm{~d} V_{\mathbb{B}^{N}}\\
& =\int_{\mathbb{B}^N} \left|a^{\frac{1}{2^*}}\left(T_n x\right) v_{n}-v\right|^{2^*}\mathrm{~d} V_{\mathbb{B}^{N}}+o(1) \\
& =\int_{\mathbb{B}^N} a\left(T_n x\right) \left|v_{n}- v\right|^{2^*}d V_{\mathbb{B}^{N}}+o(1).
\end{aligned}
\end{equation*}
In the above expression, the first equality is a straightforward application of the Brezis-Lieb lemma. For the second equality, we perform elementary calculations and appropriately apply the dominated convergence theorem along with the hypothesis that $a(x) \rightarrow 1 \text { as } d(x,0) \rightarrow \infty$.\\
Next, let $\phi \in C_c^{\infty}\left(\mathbb{B}^N\right)$ be arbitrary, then define $\tilde\phi_{n}(x)= \phi(T_n x)$.  Therefore, 
\begin{equation*}
\begin{aligned}
& \mathcal{I}^\prime_{\lambda,a,0}(z_n)[\phi]\\
 &=\langle z_n,\phi \rangle_{\la}-\int_{\mathbb{B}^N}a(x)|z_n|^{2^*-2}z_n\phi\dvg\\
 &=\langle  \bar{u}_n, \phi\rangle_\la -\langle  v, \tilde\phi_n\rangle_\la  -\int_{\mathbb{B}^{N}} a(x)|\bar{u}_n-v\circ T_n^{-1}|^{2^*-2}(\bar{u}_n-v\circ T_n^{-1})\phi \,\mathrm{~d} V_{\mathbb{B}^{N}} \\
 &=\int_{\mathbb{B}^{N}} a(x)|\bar{u}_n|^{2^*-2}\bar{u}_n\phi\,\mathrm{~d} V_{\mathbb{B}^{N}}+o(1)-\int_{\mathbb{B}^{N}}|v|^{2^*-2}v\tilde\phi_n \,\mathrm{~d} V_{\mathbb{B}^{N}}+o(1)\\
 &\quad-\int_{\mathbb{B}^{N}} a(x)|\bar{u}_n-v\circ T_n^{-1}|^{2^*-2}(\bar{u}_n-v\circ T_n^{-1})\phi \,\mathrm{~d} V_{\mathbb{B}^{N}} \\
 &=\int_{\mathbb{B}^{N}}a_n(x)|v_n|^{2^*-2}v_n\tilde\phi_n\,\mathrm{~d} V_{\mathbb{B}^{N}}-\int_{\mathbb{B}^{N}}|v|^{2^*-2}v\tilde\phi_n \,\mathrm{~d} V_{\mathbb{B}^{N}}+o(1)\\
&\quad-\int_{\mathbb{B}^{N}} a_n(x)|v_n-v|^{2^*-2}(v_n-v)\tilde\phi_n \,\mathrm{~d} V_{\mathbb{B}^{N}} \\
&=  \underbrace{\int_{\mathbb{B}^{N}} \left(a_n-1\right)|v|^{2^*-2}v\tilde\phi_n \mathrm{~d} V_{\mathbb{B}^{N}}}_{\text{$I_{n}^1$}}\\
 &\quad+  \underbrace{\int_{\mathbb{B}^{N}} a_n\left[|v_n|^{2^*-2}v_n- |v|^{2^*-2}v- |v_n- v|^{2^*-2}(v_n-v) \right]\tilde\phi_n \mathrm{~d} V_{\mathbb{B}^{N}}}_{\text{$I_{n}^2$}}+o(1).
\end{aligned}
\end{equation*}

 \noi 
 Firstly, consider $I_{n}^1$, i.e.,
 \begin{equation*}
 \begin{aligned}
 \hspace{0.5cm}&\int_{\mathbb{B}^{N}}\left(a(T_n x)-1\right)|v|^{2^*-2}v\tilde\phi_n \mathrm{~d} V_{\mathbb{B}^{N}}\\
 &=\int_{B_{R}(0)} \left(a(T_n x)-1\right)|v|^{2^*-2}v\tilde\phi_n\, \mathrm{~d} V_{\mathbb{B}^{N}}+\int_{B_{R}(0)^c} \left(a(T_n x)-1\right)|v|^{2^*-2}v\tilde\phi_n \,\mathrm{~d} V_{\mathbb{B}^{N}}\\
 &\leq C\int_{B_{R}(0)}|v|^{2^*-2}v\tilde\phi_n\, \mathrm{~d} V_{\mathbb{B}^{N}}+\eps\int_{B_{R}(0)^c}|v|^{2^*-2}v\tilde\phi_n\, \mathrm{~d} V_{\mathbb{B}^{N}},
 \end{aligned}
 \end{equation*}
 where for the last inequality, we have chosen $R>0$ large enough so that in $B_{R}(0)^c$, $|a(T_n x)-1|<\eps$. Since the definition of $\tilde\phi_n$ implies $\tilde\phi_n\rightharpoonup 0$, which in turn implies that $\tilde\phi_n\to 0$ a.e., using Vitali's convergence theorem via H\"older inequality, we conclude that 
 $\int_{B_{R}(0)}|v|^{2^*-2}v\tilde\phi_n\, \mathrm{~d} V_{\mathbb{B}^{N}}=o(1)$. On the other hand, since $\|\tilde\phi_n\|_{\la}=\|\phi\|_{\la}$, using H\"older inequality followed by Sobolev inequality, it follows that 
 $\int_{B_{R}(0)^c}|v|^{2^*-2}v\tilde\phi_n\, \mathrm{~d} V_{\mathbb{B}^{N}}$ is bounded. Hence $I_n^1=o(1)$.  

Next, as $\tilde\phi_{n}\in C_c^{\infty}\left(\mathbb{B}^N\right)$, the standard arguments using Vitali's convergence theorem yield $I_{n}^2=o(1)$. This proves the claim.

So far, we have established the following\:
$\left(z_n\right)_n$ is a $(P S)$ sequence for $\mathcal{I}_{\la,a,0}$ at the level $\beta-\mathcal{I}_{\la,a, f}({u})-\mathcal{I}_{\la,1,0}(v)$, i.e.,
\be
\beta=\mathcal{I}_{\la,a, f}({u})+\mathcal{I}_{\la,1,0}(v)+ \mathcal{I}_{\la,a,0}(z_n)+o(1)\no,
\ee
and 
\be
 z_n(x)=\bar{u}_n(x) - v(T_n^{-1}x)=u_n(x)-u(x)-v(T_n^{-1}x),\no
\ee
where $v$ solves the problem \eqref{E} (without sign condition), and $\mathcal{I}'_{\lambda,a,f}(u)=0$, i.e., $u$ solves the problem  \eqref{Paf}.

Given the above claim, if $z_n$ does not converge to zero in $\|.\|_{\la}$, we can repeat the above procedure for the PS sequence $z_n$ to land in Step 4 or Step 5. In the first case, we are through; in the second case, we will either end up with a converging PS sequence or repeat the process. But this process has to stop in finitely many stages as any PS sequence has nonnegative energy, and in each stage, we are reducing the energy by a fixed
positive constant. This completes the decomposition part of the proof of the theorem.  The Proof of $(1)$ and $(2)$ are standard; we refer to the appendix. 
\end{enumerate}
 	\end{proof}

 \section{Multiplicity of Positive solutions when $a(x) \geq 1$} \label{Thm1sec}

 In this section, we will prove the existence of at least two positive solutions of \eqref{Paf}. We will establish two positive critical points of the energy functional:
 $ \bar{\mathcal I}_{\lambda,a,f}:\mathcal{H}\to\R$  where
 \begin{equation}\label{4.1}
 \bar{\mathcal I}_{\lambda,a,f}(u)=\f{1}{2}\norm{u}^2_\la-\f{1}{2^*}\int_{\mathbb B^N}a(x)u_+^{2^*}\mathrm{~d} V_{\bn}(x)- \int_{\mathbb B^N}f(x)u\, \mathrm{~d} V_{\bn}(x).
 \end{equation}
Clearly, if $u$ is a critical point of $ \bar{\mathcal I}_{\lambda,a,f}$, then $u$ is the solution to the following problem
\begin{equation} 
	\begin{aligned}
		-\Delta_{\mathbb{B}^{N}} u- \lambda u &=a(x)u_{+}^{2^*}+f(x) \text { in } \mathbb{B}^{N}, \\
		u & \in \mathcal{H}. 
	\end{aligned}
	\label{4.pp}
\end{equation}

\begin{remark}
	If we use $v=u_{-}$ as a test function in \eqref{4.pp}, where $u$ denotes a weak solution, and $f$ is a nonnegative functional, implying that $u_{-}=0$, i.e., $u \geq 0$. Thus $u>0$ results from the maximum principle, and we get $u$ to be a solution to \eqref{Paf}.
	\label{rmk1.1}
\end{remark}
\noi
Let $g:\mathcal{H}\to\R$ be a functional defined by
  \be\label{g-def}
 g(u)=\norm{u}^2_\la-( 2^*-1)\norm{a}_{\I} \norm{u}^{2^*}_{2^*}.
 \ee
 First, we partition $\mathcal{H}$ into three disjoint sets, namely
 \begin{align*}
 	&U_1:=\Big\{u\in \mathcal{H}\,:\, u=0 \,\,\text{or}\,\, g(u)>0\Big\},\,\,\,
 	U_2:=\Big\{u\in \mathcal{H}\,:\,  \, g(u)<0\Big\},\\
 	&U_0:=\Big\{u\in \mathcal{H}\backslash \{0\}\,:\,   g(u)=0\Big\}.
 \end{align*}
\noi
Now define \begin{equation}\label{ci_def}
	c_i:=\inf_{U_i}\bar{\mathcal I}_{\lambda,a,f}(u), \,\,\,\,\text{for }\, i=0,1,2.
\end{equation}

\begin{lemma}\label{U0_Lbnd}
	Let $C_a$ be as defined in Theorem \ref{Thm-A1}. Then 
	\begin{equation}
		\frac{4}{N+2}\norm{u}_\la\geq C_{a}S_{\lambda}^\frac{N}{4}>0\quad \forall\, u\in U_0.
	\end{equation}
 
\end{lemma}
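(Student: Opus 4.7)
The plan is straightforward: I would unwind the definition of $U_0$ and combine it with the Poincaré-Sobolev inequality \eqref{S-inq} to get a lower bound on $\|u\|_\lambda$.

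First, I would observe that for $u \in U_0$ the condition $g(u) = 0$ reads
\[
\|u\|_\lambda^2 = (2^*-1)\|a\|_\infty\, \|u\|_{2^*}^{2^*}.
\]
Next, applying \eqref{S-inq} with $p = 2^*-1$ in the form $\|u\|_{2^*}^{2^*} \leq S_\lambda^{-2^*/2}\,\|u\|_\lambda^{2^*}$, I substitute to obtain
\[
\|u\|_\lambda^2 \leq (2^*-1)\|a\|_\infty\, S_\lambda^{-2^*/2}\, \|u\|_\lambda^{2^*}.
\]
Since $u \neq 0$ (by definition of $U_0$), I can divide by $\|u\|_\lambda^2$ and use $2^* - 2 = \tfrac{4}{N-2}$ to conclude
\[
\|u\|_\lambda^{\frac{4}{N-2}} \geq \frac{S_\lambda^{2^*/2}}{(2^*-1)\|a\|_\infty}.
\]
Raising both sides to the power $\tfrac{N-2}{4}$ and noting $\tfrac{2^*}{2}\cdot\tfrac{N-2}{4} = \tfrac{N}{4}$ gives
\[
\|u\|_\lambda \geq \bigl[(2^*-1)\|a\|_\infty\bigr]^{-\frac{N-2}{4}} S_\lambda^{N/4}.
\]
Multiplying by $\tfrac{4}{N+2}$ and recognizing the definition of $C_a$ from Theorem \ref{Thm-A1} yields the claim.

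There is no real obstacle here: the only two ingredients are the defining identity of $U_0$ and the Poincaré–Sobolev inequality, which yields the desired bound after one algebraic manipulation. The positivity $C_a S_\lambda^{N/4} > 0$ is immediate since $S_\lambda > 0$ and $\|a\|_\infty < \infty$ (by \eqref{assum_A}, $a \in L^\infty(\bn)$ and strictly positive).
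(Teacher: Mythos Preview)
Your proof is correct and follows essentially the same approach as the paper: both combine the defining identity $g(u)=0$ with the Poincar\'e--Sobolev inequality and then solve for $\|u\|_\lambda$. The only cosmetic difference is that the paper first takes the $2^*$-th root of $g(u)=0$ and then applies $\|u\|_\lambda \geq S_\lambda^{1/2}\|u\|_{2^*}$, whereas you substitute $\|u\|_{2^*}^{2^*} \leq S_\lambda^{-2^*/2}\|u\|_\lambda^{2^*}$ directly into $g(u)=0$; the resulting algebra is identical.
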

\begin{proof}
	Using the definition of $U_0$
	\begin{align*}
&0=g(u)=\norm{u}^2_\la-( 2^*-1)\norm{a}_{\I} \norm{u}^{2^*}_{2^*}\implies \norm{u}^\frac{2}{2^*}_\la=\big[( 2^*-1)\norm{a}_{\I}\big]^\frac{1}{2^*} \norm{u}_{2^*}\, \forall\,u\in U_0.
	\end{align*}
From definition of $S_\lambda$, we have
\begin{equation}
  \norm{u}_\la\geq S^\frac{1}{2}_\lambda\norm{u}_{2^*}=S^\frac{1}{2}_\lambda\frac{\norm{u}^\frac{2}{2^*}_\la}{\big[( 2^*-1)\norm{a}_{\I}\big]^\frac{1}{2^*}}\,\,\,\forall\,u\in U_0.
\end{equation}
Hence the lemma follows.
\end{proof}
\begin{remark}\label{rem_2*_bnd}
As a consequence of Lemma \ref{U0_Lbnd}, $\|u\|_{\la}$ and $\norm{u}_{2^*}$ are bounded away from zero for all $u\in U_0$.
 
\end{remark}

\begin{remark}\label{rem_U_012}
	 For any $t>0$ and $u\in \mathcal{H}$  $$g(tu)=t^2\norm{u}^2_\la-t^{2^*}( 2^*-1)\norm{a}_{\I} \norm{u}^{2^*}_{2^*}.$$ 
  Clearly, $g(0)=0$, and $t\mapsto g(tu)$ is a strictly concave function.  Thus for any $u\in \mathcal{H} $ with $\norm{u}_{\lambda}=1$, there exists a unique $t=t(u)$ such that $tu\in U_0$.
	Moreover, for any $v\in U_0$, $g(tv)=(t^2-t^{2^*})\norm{v}^2_\la$. As an immediate consequence, 
	 \begin{equation*}
	 	tv\in U_1 \text{ for all }t\in (0,1) \quad\text{ and } \quad	tv\in U_2 \text{ for all }t\in (1,\I).
	 \end{equation*}
\end{remark}
\begin{theorem}\label{comp_c1c0}
	Assume $C_a$ is defined as in Theorem \ref{Thm-A1}, and $c_0$ and $c_1$ are defined as in \eqref{ci_def}. Furthermore, if 
	\begin{equation}\label{Caf_comp}
		\inf_{u\in \mathcal{H}, \norm{u}_{2^*}=1}\Big\{ C_a\norm{u}^{\frac{N+2}{2}}_\la-\langle f,u\rangle\Big\}>0,
	\end{equation}
then $c_1<c_0$.
\end{theorem}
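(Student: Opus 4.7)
The plan is to prove $c_1<c_0$ by extracting a uniform energy drop along the ray from each $u\in U_0$ back into the interior $U_1$. First I set $v=u/\norm{u}_{2^*}$ (so $\norm{v}_{2^*}=1$) and combine the defining relation of $U_0$, namely $\norm{u}_\la^2=(2^*-1)\norm{a}_\I\norm{u}_{2^*}^{2^*}$, with the pointwise bound $\int au_+^{2^*}\dvg\le\norm{a}_\I\norm{u}_{2^*}^{2^*}=\norm{u}_\la^2/(2^*-1)$, to get
\[
\langle\bar{\mathcal I}'_{\la,a,f}(u),u\rangle=\norm{u}_\la^2-\int au_+^{2^*}\dvg-\langle f,u\rangle\ \ge\ \frac{4}{N+2}\norm{u}_\la^2-\langle f,u\rangle.
\]
Using the $U_0$ relation once more, $\norm{u}_\la=\norm{v}_\la^{N/2}/[(2^*-1)\norm{a}_\I]^{(N-2)/4}$ and $\norm{u}_{2^*}=\norm{v}_\la^{(N-2)/2}/[(2^*-1)\norm{a}_\I]^{(N-2)/4}$, so the right-hand side rewrites as $\norm{u}_{2^*}\{C_a\norm{v}_\la^{(N+2)/2}-\langle f,v\rangle\}$. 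By \eqref{Caf_comp} the braces are $\ge\eta$ for some uniform $\eta>0$, and by Poincar\'e--Sobolev $\norm{v}_\la\ge S_\la^{1/2}$, forcing $\norm{u}_{2^*}\ge m:=S_\la^{(N-2)/4}/[(2^*-1)\norm{a}_\I]^{(N-2)/4}$. Hence $\langle\bar{\mathcal I}'_{\la,a,f}(u),u\rangle\ge m\eta$ uniformly on $U_0$.

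Next, for fixed $u\in U_0$ set $\phi_u(t):=\bar{\mathcal I}_{\la,a,f}(tu)$ on $[0,1]$. Direct differentiation yields $\phi_u''(t)=\norm{u}_\la^2-(2^*-1)t^{2^*-2}\int au_+^{2^*}\dvg$, and the same $U_0$ bound on $\int au_+^{2^*}\dvg$ gives $\phi_u''(t)\ge\norm{u}_\la^2(1-t^{2^*-2})\ge 0$ on $[0,1]$; in particular $|\phi_u''(t)|\le 2\norm{u}_\la^2$ there. So $\phi_u$ is convex on $[0,1]$ with $\phi_u(0)=0$, $\phi_u(1)=\bar{\mathcal I}_{\la,a,f}(u)$ and $\phi_u'(1)\ge m\eta$. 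Choosing $\eps_u:=\min\{m\eta/(4\norm{u}_\la^2),1/2\}$ and using the $C^2$ bound gives $\phi_u'(s)\ge m\eta/2$ for $s\in[1-\eps_u,1]$; integrating yields
\[
\bar{\mathcal I}_{\la,a,f}(u)-\bar{\mathcal I}_{\la,a,f}((1-\eps_u)u)=\int_{1-\eps_u}^1\phi_u'(s)\,ds\ \ge\ \frac{(m\eta)^2}{8\norm{u}_\la^2}.
\]
Since $(1-\eps_u)u\in U_1$ by Remark \ref{rem_U_012}, this proves $c_1\le\bar{\mathcal I}_{\la,a,f}(u)-(m\eta)^2/(8\norm{u}_\la^2)$ for every $u\in U_0$.

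To finish, the estimates above also yield $\bar{\mathcal I}_{\la,a,f}(u)\ge\tfrac{3N-2}{N(N+2)}\norm{u}_\la^2-\norm{f}_{\mathcal H'}\norm{u}_\la$ on $U_0$, so $\bar{\mathcal I}_{\la,a,f}(u)\to+\infty$ as $\norm{u}_\la\to\infty$ along $U_0$; in particular $c_0$ is finite and there exists $M>0$ with $c_0=\inf\{\bar{\mathcal I}_{\la,a,f}(u):u\in U_0,\,\norm{u}_\la\le M\}$. Restricting the previous step to this bounded subset and taking the infimum produces $c_1\le c_0-(m\eta)^2/(8M^2)<c_0$.

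The main obstacle: hypothesis \eqref{Caf_comp} is just strong enough to force the Nehari-type derivative $\langle\bar{\mathcal I}'_{\la,a,f}(u),u\rangle$ to be uniformly positive on $U_0$, but \emph{not} strong enough to guarantee $\bar{\mathcal I}_{\la,a,f}\ge 0$ on $U_0$ (the natural lower bound only delivers this under the stricter inequality $\norm{f}_{\mathcal H'}<\tfrac{3N-2}{4N}C_aS_\la^{N/4}$). One cannot therefore conclude by the easy comparison $c_1\le 0<c_0$; extracting the \emph{quantitative} gap above, which requires both the convexity of $\phi_u$ on $[0,1]$ and a coercivity-based upper bound $\norm{u}_\la\le M$, is the delicate part of the argument.
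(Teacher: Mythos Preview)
Your proof is correct and follows the same overall strategy as the paper---uniform positivity of the ray derivative at $t=1$ on $U_0$, a $C^2$ bound near $t=1$, and coercivity to localize to a bounded set---but the execution differs in two respects worth noting.

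First, the paper introduces an auxiliary functional $J(u)=\tfrac12\|u\|_\la^2-\tfrac{\|a\|_\I}{2^*}\|u\|_{2^*}^{2^*}-\langle f,u\rangle$ and establishes the derivative bound $\tfrac{d}{dt}J(tu)|_{t=1}\ge\alpha>0$ on $U_0$, then compares back via $\tfrac{d}{dt}\bar{\mathcal I}_{\la,a,f}(tu)\ge\tfrac{d}{dt}J(tu)$; you work directly with $\bar{\mathcal I}_{\la,a,f}$ and bypass the auxiliary functional. Second, the paper first proves $c_1<0$ (by exhibiting some $v\in U_1$ with negative energy), uses this to force $\langle f,u_n\rangle>0$ along a minimizing sequence, and then locates an interior critical point $t_n\in(0,1)$ of $t\mapsto J(tu_n)$; your convexity observation $\phi_u''(t)\ge\|u\|_\la^2(1-t^{2^*-2})\ge0$ on $[0,1]$ lets you skip the $c_1<0$ step entirely and simply step back from $t=1$ by a controlled amount. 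Your route is marginally more self-contained; the paper's route yields $c_1<0$ as a byproduct, which is used later.

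One small technical point: your displayed gap $(m\eta)^2/(8\|u\|_\la^2)$ assumes the minimum defining $\eps_u$ is attained by $m\eta/(4\|u\|_\la^2)$. In the other branch $\eps_u=1/2$ you only get $\phi_u(1)-\phi_u(1/2)\ge m\eta/4$, which need not dominate $(m\eta)^2/(8\|u\|_\la^2)$ when $\|u\|_\la^2<m\eta/2$. This does not affect the conclusion---on the bounded set $\{\|u\|_\la\le M\}$ the drop is uniformly at least $\min\{(m\eta)^2/(8M^2),\,m\eta/4\}>0$---but the final inequality should be stated with that $\min$.
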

\begin{proof}
	Define a functional $J: \mathcal{H}\to \mathbb R$ as
	\begin{equation*}
		J(u)=\frac{1}{2} \norm{u}_\lambda^2-\frac{\norm{a}_{\I}}{2^*}\norm{u}^{2^*}_{2^*}-\langle f,u\rangle.
	\end{equation*}
\begin{enumerate}[label = \textbf{Step \arabic*:}]
\item In this first step, we prove that there exists an $\alpha>0$ such that 
\begin{equation*}
	\frac{d}{dt}J(tu)\big|_{t=1}\geq \alpha\quad \text{for all  }\,\,u\in U_0.
\end{equation*}
Indeed, using the definition of $U_0$ and the value of $C_a$, we have for $u\in U_0$
\begin{align}\label{l:new est5}
	\frac{d}{dt}J(tu)\big|_{t=1}=  \norm{u}_\lambda^2-\norm{a}_{\I}\norm{u}^{2^*}_{2^*}-\langle f,u\rangle
	&=\Big( \frac{4}{N+2}\Big)\norm{u}_\lambda^2-\langle f,u\rangle\\ \notag
		&=C_a\frac{\norm{u}^\frac{N+2}{2}_\lambda}{\norm{u}^{\frac{N}{2}}_{2^*}} -\langle f,u\rangle.
\end{align}
Moreover, from the given hypothesis \eqref{Caf_comp}, there exists a $\beta>0$ such that
\begin{equation}\label{inf-prop}
\inf_{u\in \mathcal{H}, \norm{u}_{2^*}=1}\Big\{ C_a\norm{u}^{\frac{N+2}{2}}_\la-\langle f,u\rangle\Big\}\geq\beta.
\end{equation} 
Now, \begin{align*}
\eqref{inf-prop}&\iff C_a\frac{\norm{u}^\frac{N+2}{2}_\lambda}{\norm{u}^{\frac{N}{2}}_{2^*}}-\langle f,u\rangle\,\geq\beta, \quad \norm{u}_{2^*}=1\\
&\iff  C_a\frac{\norm{u}^\frac{N+2}{2}_\lambda}{\norm{u}^{\frac{N}{2}}_{2^*}}-\langle f,u\rangle\,\geq\beta \norm{u}_{2^*}, \quad  u\in \mathcal{H}\backslash\{0\}.
\end{align*}
Hence plugging back the above estimate into \eqref{l:new est5} and using Remark~\ref{rem_2*_bnd}, we complete the proof of Step 1.
\begin{align*}
	\frac{d}{dt}J(tu)\big|_{t=1}& =C_a\frac{\norm{u}^\frac{N+2}{2}_\lambda}{\norm{u}^{\frac{N}{2}}_{2^*}} -\langle f,u\rangle\geq\beta \norm{u}_{2^*}, \quad  u\in \mathcal{H}\backslash\{0\}.
\end{align*}

 \item Let $(u_n)_n$ be a minimizing sequence for $\bar{\mathcal I}_{\lambda,a,f}$ on $U_0$, i.e., $\bar{\mathcal I}_{\lambda,a,f}(u_n)\to c_0$ and $\norm{u_n}^2_\la=( 2^*-1)\norm{a}_{\I} \norm{u_n}^{2^*}_{2^*}$. Therefore, as $n\to \infty$
  \begin{equation*}
  	c_0+o(1)=\bar{\mathcal I}_{\lambda,a,f}(u_n)\geq J(u_n)\geq \Big( \frac{1}{2}-\frac{1}{2^*(2^*-1)}\Big)\norm{u_n}_\lambda^2-\norm{f}_{\mathcal{H}'}\norm{u_n}_\lambda.
  \end{equation*}
This implies  $J(u_n)$ is a bounded sequence, and  $\norm{u_n}_\lambda$, $\norm{u_n}_{2^*}$ are bounded too.\\
\textbf{Claim: $c_1<0$.}\\
Indeed, to prove the above claim, it is enough to show that $\bar{\mathcal I}_{\lambda, a,f}(v)<0$ for some $v\in U_1$. Thanks to Remark \ref{rem_U_012}, we can choose $u\in U_0$ such that $\langle f,u\rangle>0$. Therefore,
\begin{align*}
	\bar{\mathcal I}_{\lambda,a,f}(tu)&=\f{t^2}{2}\norm{u}^2_\la-\f{t^{2^*}}{2^*}\int_{\mathbb B^N}a(x)u_+^{2^*}\mathrm{~d} V_{\bn}(x)- t\langle f,u \rangle\\
	&\leq\f{t^2}{2}( 2^*-1)\norm{a}_{\I} \norm{u}^{2^*}_{2^*}-\f{t^{2^*}}{2^*}\norm{u}^{2^*}_{2^*}- t\langle f,u \rangle\\
	&<0,
\end{align*}
for $t\ll 1$ small enough. Furthermore, by Remark \ref{rem_U_012}, $v:=tu\in U_1$. Hence the claim follows. Therefore, $\bar{\mathcal I}_{\lambda,a,f}(u_n)<0$ for large $n$. As an immediate consequence,
\begin{align*}
	0>\bar{\mathcal I}_{\lambda,a,f}(u_n)&\geq \f{1}{2}\norm{u_n}^2_\la-\f{\norm{a}_{\I}}{2^*}\norm{u_n}^{2^*}_{2^*} - \langle f,u_n \rangle\\
 &=\Big( \frac{1}{2}-\frac{1}{2^*(2^*-1)}\Big)\norm{u_n}_\lambda^2-\langle f,u_n \rangle.
\end{align*}
Thus for $n$ large enough, $\langle f,u_n \rangle>0$. Consequently, $\frac{d}{dt}J(tu_n)<0$ for $t>0$ small enough. Therefore by Step 1, there exists $t_n\in (0,1)$ such that $\frac{d}{dt}J(t_nu_n)=0$. Moreover, a direct calculation shows that the function $t\to \frac{d}{dt}J(tu)$ is strictly increasing in $[0,1)$ for all $u\in U_0$. As a result, the choice of $t_n$ is unique.

\item In this step, we prove that
\begin{equation}\label{liminf-J}
  \liminf_{n\to\I}\Bigl\{ J(u_n)-J(t_nu_n)\Bigr\}>0.
\end{equation}
Observe that
 $J(u_n)-J(t_nu_n)=\int_{t_n}^{1}\frac{d}{dt}\left\{J(tu_n)\right\}dt$, and for all $n\in \mathbb N$ there is a $\zeta_n>0$ such that $t_n\in(0, 1-2\zeta_n)$ and $\frac{d}{dt}J(tu_n)>\tfrac{\alpha}{2}$ for $t\in [1-\zeta_n,1]$.\\
 	To prove \eqref{liminf-J}, it suffices to show that $\xi_{n}>0$ can be chosen independent of $n \in \mathbb{N}$, which follows, as by step 1,  $\left.\frac{d}{d t} J\left(t u_{n}\right)\right|_{t=1} \geq \alpha$. Moreover, the boundedness of $(u_{n})_n$ gives
\begin{align*}
\left|{\frac{d^{2}}{d t^{2}} J\left(t u_{n}\right)}\right|&=\left|\left\|u_{n}\right\|_\lambda^{2}-(2^*-1)\|a\|_{\I} t^{2^*-2}\left\|u_{n}\right\|_{2^*}^{2^*}\right|\\
&=\left|\left(1-t^{2^*-2}\right)\left\|u_{n}\right\|_{\la}^{2}\right| \leq C,
\end{align*}
for all $n \geq 1$ and $t \in[0,1]$.\\
\item
  From the definition of $\bar{\mathcal I}_{\lambda,a,f}$ and $J$, it directly follows that  $\frac{d}{dt}	\bar{\mathcal I}_{\lambda,a,f}(tu)\geq \frac{d}{dt}J(tu)$ for all $u\in \mathcal{H}$ and $\forall\, t>0$. Therefore,
 \begin{align*}
 \bar{\mathcal I}_{\lambda,a,f}(u_n)- \bar{\mathcal I}_{\lambda,a,f}(t_nu_n)&=\int_{t_n}^{1}\frac{d}{dt}\left\{\bar{\mathcal I}_{\lambda,a,f}(tu_n)\right\}dt \\
 &\geq \int_{t_n}^{1}\frac{d}{dt}\left\{ J(tu_n)\right\}dt=J(u_n)-J(t_nu_n).
 \end{align*}
Since $(u_n)_n$ is a minimizing sequence for $ \bar{\mathcal I}_{\lambda,a,f}$ on $U_0$ and $t_nu_n\in U_1$ for all $n$, we deduce using \eqref{liminf-J} that
 \begin{equation*}
 	c_1=\inf_{U_1}\bar{\mathcal I}_{\lambda,a,f}(u)<\inf_{U_0}\bar{\mathcal I}_{\lambda,a,f}(u)=c_0.
 \end{equation*}
 This completes the proof. 
 \end{enumerate}
 \end{proof}
 The following result specifies the criteria for the validity of \eqref{Caf_comp}.
 \begin{proposition}\label{cond_1st sol}
 	If $\norm{f}_{\mathcal{H}'}<C_aS_\lambda^\frac{N}{4}$, then \eqref{Caf_comp} holds.
 \end{proposition}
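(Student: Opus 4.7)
The plan is to prove the proposition by a direct two-step estimate using the Poincaré--Sobolev inequality and the definition of the dual norm, with no nontrivial obstacles.

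First I would fix $u \in \mathcal{H}$ with $\|u\|_{2^*}=1$ and bound $\|u\|_\lambda$ from below by the Poincaré--Sobolev inequality \eqref{S-inq}: since $S_\lambda$ is the best constant, one has $\|u\|_\lambda \ge S_\lambda^{1/2}\|u\|_{2^*} = S_\lambda^{1/2}$. In particular $\|u\|_\lambda^{N/2}\ge S_\lambda^{N/4}$. Next I would estimate the linear term by duality, namely $\langle f,u\rangle \le \|f\|_{\mathcal{H}'}\|u\|_\lambda$.

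Combining these two bounds gives
\begin{equation*}
C_a\|u\|_\lambda^{\frac{N+2}{2}}-\langle f,u\rangle
\;\ge\; \|u\|_\lambda\Bigl(C_a\|u\|_\lambda^{\frac{N}{2}} - \|f\|_{\mathcal{H}'}\Bigr)
\;\ge\; S_\lambda^{1/2}\Bigl(C_a S_\lambda^{\frac{N}{4}} - \|f\|_{\mathcal{H}'}\Bigr).
\end{equation*}
By the hypothesis $\|f\|_{\mathcal{H}'}<C_a S_\lambda^{N/4}$, the right-hand side is a strictly positive constant independent of $u$. Taking the infimum over all $u\in\mathcal{H}$ with $\|u\|_{2^*}=1$ yields \eqref{Caf_comp}, which is exactly what had to be shown.

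There is no real obstacle here: the argument is purely a chaining of the Sobolev inequality with the duality pairing bound, and the explicit constant $C_a S_\lambda^{N/4}$ appearing in the hypothesis is calibrated precisely so that the last display remains strictly positive uniformly on the unit sphere $\{\|u\|_{2^*}=1\}$.
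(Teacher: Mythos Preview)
Your proof is correct and in fact more direct than the paper's. Both arguments rest on the same two ingredients --- the duality bound $\langle f,u\rangle\le\|f\|_{\mathcal H'}\|u\|_\lambda$ and the Poincar\'e--Sobolev inequality --- but you apply them directly on the constraint set $\{\|u\|_{2^*}=1\}$, factor out $\|u\|_\lambda$, and obtain the uniform positive lower bound in one line. The paper instead reformulates \eqref{Caf_comp} on the set $U_0=\{g=0\}$ via a chain of equivalences (using that the expression is $1$-homogeneous and every ray meets $U_0$ exactly once), then invokes Lemma~\ref{U0_Lbnd} and Remark~\ref{rem_2*_bnd} to bound $\frac{4}{N+2}\|u\|_\lambda^2-\langle f,u\rangle$ from below on $U_0$. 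Your route avoids this detour through $U_0$ entirely and is self-contained; the paper's route has the minor advantage of reusing the $U_0$ machinery already built up for Theorem~\ref{comp_c1c0}, but at the cost of a longer argument.
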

 \begin{proof}
 	From the hypothesis, there exists an $\epsilon>0$ small enough such that  $  \norm{f}_{\mathcal{H}'}<C_aS_\lambda^\frac{N}{4}-\epsilon$. Therefore, using Lemma \ref{U0_Lbnd}, for all $u\in U_0$, we have 
 	\begin{equation*}
 		\langle f,u \rangle\leq \norm{f}_{\mathcal{H}'}\norm{u}_\lambda<\bigg[C_aS_\lambda^\frac{N}{4}-\epsilon\bigg]\norm{u}_\lambda\leq \frac{4}{N+2}\norm{u}_\lambda^2-\epsilon \norm{u}_\lambda.
 	\end{equation*}
 Thus, \begin{equation*}
\frac{4}{N+2}\norm{u}_\lambda^2	-\langle f,u \rangle \geq \epsilon \norm{u}_\lambda \quad\forall\, u\in U_0.
 \end{equation*}
 Combining this with Remark~\ref{rem_2*_bnd} yields 
\begin{equation}\label{inf_ufu_nrm}
\inf_{U_0}	\bigg[\frac{4}{N+2}\norm{u}_\lambda^2	-\langle f,u \rangle \bigg]\geq \epsilon \inf_{U_0}\norm{u}_\lambda>0.
\end{equation}
On the other hand, 
\begin{align*}
\eqref{Caf_comp}&\iff\inf_{u\in \mathcal{H}, \norm{u}_{2^*}=1}\Big\{ C_a\norm{u}^{\frac{N+2}{2}}_\la-\langle f,u\rangle\Big\}>0\\&\iff C_a\frac{\norm{u}^\frac{N+2}{2}_\lambda}{\norm{u}^{\frac{N}{2}}_{2^*}}-\langle f,u\rangle\,> \,0 \quad\text{for   } \norm{u}_{2^*}=1\\ 
&\iff C_a\frac{\norm{u}^\frac{N+2}{2}_\lambda}{\norm{u}^{\frac{N}{2}}_{2^*}}-\langle f,u\rangle\,> \,0 \quad\text{for   } u\in U_0\\ 
&\iff \frac{4}{N+2}\norm{u}_\lambda^2-\langle f,u\rangle\,> \,0 \quad\text{for   } u\in U_0.
\end{align*}
Thus the proposition follows from \eqref{inf_ufu_nrm} and the last equivalent estimations.
 \end{proof}
 
Now we are ready to prove the existence of at least two solutions, $u_1$ and $u_2$, to \eqref{Paf}.
 \begin{proposition}\label{1st_sol}
 	Assume that \eqref{Caf_comp} holds. Then $\mathcal{\bar I}_{\lambda,a,f}$ has a critical point $u_1\in U_1$ such that $\mathcal{\bar I}_{\lambda,a,f}(u_1)=c_1$. Moreover, $u_1$ solves the problem \eqref{Paf}.
 \end{proposition}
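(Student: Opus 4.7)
The plan is to obtain $u_1$ as a minimizer of $\bar{\mathcal I}_{\lambda,a,f}$ on the open set $U_1$, using that condition \eqref{Caf_comp} gives us the strict gap $c_1 < c_0$ from Theorem \ref{comp_c1c0} together with the (in fact strictly negative) bound on $c_1$ obtained in its proof. First I would verify that $\bar{\mathcal I}_{\lambda,a,f}$ is coercive and bounded below on $U_1$: for $u \in U_1\setminus\{0\}$ the defining inequality $g(u) \geq 0$ gives $\|a\|_\infty\|u\|_{2^*}^{2^*} \leq \frac{1}{2^*-1}\|u\|_\lambda^2$, so
\begin{equation*}
\bar{\mathcal I}_{\lambda,a,f}(u) \geq \left(\frac{1}{2}-\frac{1}{2^*(2^*-1)}\right)\|u\|_\lambda^2 - \|f\|_{\mathcal H'}\|u\|_\lambda,
\end{equation*}
which is bounded below and coercive on $U_1$. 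Combined with $c_1 < 0$ (from the proof of Theorem \ref{comp_c1c0}), this makes $c_1$ a finite negative number.

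Next I would pick a minimizing sequence $(u_n) \subset U_1$ with $\bar{\mathcal I}_{\lambda,a,f}(u_n) \to c_1$. Applying Ekeland's variational principle on the closed set $\overline{U_1}$ yields, up to replacement, a sequence with $\bar{\mathcal I}_{\lambda,a,f}'(u_n) \to 0$ in $\mathcal H'$ \emph{along the constraint}. The key is that Theorem \ref{comp_c1c0} gives $c_1 < c_0 = \inf_{U_0} \bar{\mathcal I}_{\lambda,a,f}$, and by continuity of $\bar{\mathcal I}_{\lambda,a,f}$ on $U_0$, the sequence $u_n$ stays bounded away from $U_0 = \partial U_1$ for $n$ large. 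Hence the Ekeland constraint is inactive and $(u_n)$ is an honest Palais--Smale sequence for the unconstrained functional $\bar{\mathcal I}_{\lambda,a,f}$ at level $c_1$.

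I would then invoke (the analogue for $\bar{\mathcal I}$ of) the profile decomposition in Theorem \ref{ps_decom}: up to a subsequence
\begin{equation*}
u_n = u_1 + \sum_{j=1}^{n_1}\mathcal{U}^j\circ T_n^j + \sum_{k=1}^{n_2} v_n^k + o(1),
\end{equation*}
with $u_1$ a critical point of $\bar{\mathcal I}_{\lambda,a,f}$, and
\begin{equation*}
c_1 = \bar{\mathcal I}_{\lambda,a,f}(u_1) + \sum_{j=1}^{n_1}\mathcal I_{\lambda,1,0}(\mathcal U^j) + \sum_{k=1}^{n_2} a(y^k)^{\frac{2-N}{2}}J(V^k).
\end{equation*}
Each nontrivial bubble carries strictly positive energy: $\mathcal I_{\lambda,1,0}(\mathcal U^j) \geq \frac{1}{N}S_\lambda^{N/2}$ and $a(y^k)^{(2-N)/2}J(V^k) \geq \frac{1}{N}\bar{a}^{(2-N)/2}S^{N/2}$. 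Since $c_1 < 0$ and $\bar{\mathcal I}_{\lambda,a,f}(u_1)$ is itself a critical value, the only way the decomposition can balance is $n_1 = n_2 = 0$; hence $u_n \to u_1$ strongly in $\mathcal H$. By strong convergence, $g$ is continuous, so $u_1 \in \overline{U_1}$, and the gap $c_1 < c_0$ forces $u_1 \in U_1$ with $\bar{\mathcal I}_{\lambda,a,f}(u_1) = c_1$.

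Finally, since $u_1$ is a critical point of $\bar{\mathcal I}_{\lambda,a,f}$ and $f \geq 0$, Remark \ref{rmk1.1} (testing against $(u_1)_-$ and applying the maximum principle) gives $u_1 > 0$, so $u_1$ solves \eqref{Paf}. The main technical obstacle is ensuring the Palais--Smale decomposition truly applies at level $c_1$ to the modified functional $\bar{\mathcal I}_{\lambda,a,f}$ (with $u_+^{2^*}$ in place of $|u|^{2^*}$); this reduces to checking that the profile extraction argument in the proof of Theorem \ref{ps_decom} carries over verbatim once one notes that the PS decomposition depends only on the structure of the nonlinearity at nonnegative values, and the limiting bubbles $\mathcal{U}^j$, $V^k$ are (up to sign) solutions of \eqref{E} and \eqref{Talenti} respectively, contributing nonnegative energies that are incompatible with $c_1 < 0$.
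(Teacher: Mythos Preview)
Your overall strategy matches the paper's---minimize on $U_1$, use Ekeland and the gap $c_1<c_0$ to obtain an unconstrained PS sequence, then apply the profile decomposition---but the step excluding bubbles has a genuine gap. The claim that positive bubble energies are ``incompatible with $c_1<0$'' is a non sequitur: the energy splitting
\[
c_1=\bar{\mathcal I}_{\lambda,a,f}(u_1)+\sum_j \mathcal I_{\lambda,1,0}(\mathcal U^j)+\sum_k a(y^k)^{\frac{2-N}{2}}J(V^k)
\]
only yields $\bar{\mathcal I}_{\lambda,a,f}(u_1)\le c_1$, and nothing you have proved prevents the weak limit $u_1$ from being a critical point with energy \emph{strictly below} $c_1$ (a priori $u_1$ could lie in $U_2$). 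The sign of $c_1$ plays no role in this balance, and ``$\bar{\mathcal I}_{\lambda,a,f}(u_1)$ is a critical value'' gives no lower bound here, since under the sole hypothesis \eqref{Caf_comp} you have no a priori control of the form $\bar{\mathcal I}_{\lambda,a,f}(u_1)\ge c_1$ for arbitrary critical points.

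The paper rules out bubbles by a different, constraint-based mechanism using the functional $g$ from \eqref{g-def}. If a bubble were present one would have $\bar{\mathcal I}_{\lambda,a,f}(u_1)<c_1$, hence $u_1\notin U_1$, i.e.\ $g(u_1)\le 0$; one checks directly that each bubble satisfies $g(u_n^j)<0$ and $g(v_n^k)<0$. After first arranging $(u_n)_-\to 0$ (so that all pieces are nonnegative and $(A+B)^{2^*}\ge A^{2^*}+B^{2^*}$ applies), one estimates
\[
g\Big(u_1+\sum_j u_n^j+\sum_k v_n^k\Big)\le g(u_1)+\sum_j g(u_n^j)+\sum_k g(v_n^k)+(\text{cross terms}),
\]
and the substantial work is to show that every cross inner product $\langle u_1,u_n^j\rangle_\lambda$, $\langle u_1,v_n^k\rangle_\lambda$, $\langle u_n^j,v_n^k\rangle_\lambda$, $\langle u_n^j,u_n^l\rangle_\lambda$, $\langle v_n^k,v_n^i\rangle_\lambda$ is $o(1)$---in particular the mixed hyperbolic/Aubin--Talenti interaction, which requires its own estimate. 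This forces $\liminf_n g(u_n)<0$, contradicting $u_n\in U_1$. Your proposal neither supplies this argument nor an alternative that works under \eqref{Caf_comp} alone.
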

 
 \begin{proof}
 We divide the proof into the following few steps.  
 \begin{enumerate}[label = \textbf{Step \arabic*:}]
\item We claim that $c_1>-\I$.\\
As $\bar{\mathcal I}_{\lambda, a,f}(u)\geq J(u)$, to prove the claim, it is enough to show that $J$ is bounded from below. From the definition of $U_1$, we have 
\begin{align}\label{J_Lbnd}
	J(u)\geq \bigg[\frac{1}{2} -\frac{1}{2^*(2^{*}-1)}\bigg]\norm{u}_\lambda^2-\norm{f}_{\mathcal H'}\norm{u}_\lambda\quad\forall\, u\in U_1.
\end{align}
Since the RHS of the above expression is a quadratic function in $\|u\|_{\la}$, $J$ is bounded from below. This proves Step 1.

\medskip

\item In this step, we show that there exists a nonnegative bounded $PS$ sequence $(u_n)_n\subset U_1$ for $\bar{\mathcal I}_{\lambda, a,f}$ at the level $c_1$.\\
		Let $(u_{n})_n \subset \bar{U}_{1}$ such that $\bar{\mathcal I}_{\lambda,a,f}\left(u_{n}\right) \rightarrow c_{1}$. Since by Theorem \ref{comp_c1c0}, $c_{1}<c_{0}$, without restriction we can assume $u_{n} \in U_{1}$. Therefore, by Ekeland's variational principle, we can extract a PS sequence from $(u_{n})_n$ in $U_{1}$ for $\bar{\mathcal I}_{\lambda, a,f}$ at the level $c_{1}$. We still denote this PS sequence by $(u_{n})_n$. As $\bar{\mathcal I}_{\lambda,a,f}(u) \geq J(u)$ so, from \eqref{J_Lbnd}, we get $(u_{n})_n$ is a bounded sequence in $\mathcal{H}$. Therefore, up to a subsequence, $u_n \rightharpoonup u$ in $\mathcal{H}$ and $u_n \rightarrow u$ a.e. in $\mathbb{B}^N$. In particular, $\left(u_n\right)_{+} \rightarrow\left(u\right)_{+}$ and $\left(u_n\right)_{-} \rightarrow\left(u\right)_{-}$ a.e. in $\mathbb{B}^N$. Since $f$ is a nonnegative functional, we get as $n \rightarrow \infty$
\begin{equation*}
\begin{aligned}
o(1) & = \bar{I}_{\la, a, f}^{\prime}\left(u_n\right)\left[(u_n)_{-}\right]\\
& =\langle u_n,\left(u_n\right)_{-}\rangle_\lambda- \int_{\mathbb B^N}a(x)(u_n)_{+}^{2^*-1} (u_n)_{-}\;\mathrm{~d} V_{\bn}- \langle f, (u_n)_{-} \rangle  \\
& \leq-\left\|\left(u_n\right)_{-}\right\|_\lambda^2.
\end{aligned}
\end{equation*}
This shows that $\left(u_n\right)_{-} \rightarrow 0$ in $\mathcal{H}$, which implies $\left(u\right)_{-} \equiv 0$, that is, $u \geq 0$ a.e. in $\mathbb{B}^N$. As a result, without loss of generality, we can assume $(u_{n})_n$ is a nonnegative $P S$ sequence. Thus the proof of Step 2 is completed.

\medskip

\item  $(u_n)_n$ is a bounded $PS$ sequence as in Step 2, and let $u_n\rightharpoonup u$ in $ \mathcal{H}$.   We claim that $u\in U_1$ and $u_n\to u$ in $ \mathcal{H}$.\\

\noi
Applying the PS-decomposition, i.e., Theorem \ref{ps_decom}, we know that
$$u_n-\Big(u+\sum_{j=1}^{n_1}u_n^j+\sum_{k=1}^{n_2}v_n^k\Big)\to 0\,\,\text{  in  } \mathcal{H},$$
with $\bar{\mathcal I}'_{\lambda,a,f}(u)=0$. To prove Step 3, we need to prove that both $n_1=n_2=0$. We demonstrate it through contradiction. First, we consider the case when both $n_1\ne 0$ and $n_2\ne 0$. If one of them is zero, that case is again comparatively easier and the argument in that case
will follow from this case. We notice that
\begin{align*}
	g\big(u_n^j \big)&=g\big(\mathcal{U}\circ T_{n}^j \big)=\norm{\mathcal{U}\circ T_{n}^j}^2_\la-( 2^*-1)\norm{a}_{\I} \norm{\mathcal{U}\circ T_{n}^j}^{2^*}_{2^*}\\&=\norm{\mathcal{U}}^2_\la-( 2^*-1)\norm{a}_{\I} \norm{\mathcal{U}}^{2^*}_{2^*}=\Big[1-( 2^*-1)\norm{a}_{\I} \Big]\norm{\mathcal{U}}^2_\la<0,
\end{align*}
and
\begin{align*}
	g\big(v_n^j \big)&=\norm{ v_n^j}^2_\la-( 2^*-1)\norm{a}_{\I} \norm{ v_n^j}^{2^*}_{2^*}\\
     &=\int_{\mathbb{B}^N}a(x)|v^j_n|^{2^*}\mathrm{~d} V_{\bn}-( 2^*-1)\norm{a}_{\I} \norm{ v_n^j}^{2^*}_{2^*}+o(1)\\
	&\leq \|a\|_{\I} \norm{ v_n^j}^{2^*}_{2^*}(2-2^*)<0.
\end{align*}
Now Theorem \ref{ps_decom} gives
$$o(1)+c_1=\bar{\mathcal I}_{\lambda,a,f}(u_n)\to \bar{\mathcal I}_{\lambda,a,f}(u)+\sum_{j=1}^{n_1}{\mathcal I}_{\lambda,1,0}(\mathcal{U}^j)+\sum_{k=1}^{n_2} a(y^k)^{-\frac{N-2}{2}}J(V^k).$$
Since $a>0$, and both ${\mathcal I}_{\lambda,1,0}(\mathcal{U}^j), \, J(V^k)>0$, it follows  that $\mathcal I_{\lambda,a,f}(u)<c_1$. Therefore $u\not\in U_1$, which in turn leads to $g(u)\leq0$.

Next, we evaluate $g\Big(u+\sum_{j=1}^{n_1}u_n^j+\sum_{k=1}^{n_2}v_n^k\Big)$. Since $g(u_n)>0$ (as $u_n\in U_1$), the uniform continuity of $g$ implies
\begin{equation}\label{g_unifrm}
	0\leq \liminf_{n\to \I}g(u_n)=\liminf_{n\to \I} g\Big(u+\sum_{j=1}^{n_1}u_n^j+\sum_{k=1}^{n_2}v_n^k\Big).
\end{equation}
\noi
Since $u,\, u_n^j,\, v_n^k\geq 0$ for all $j=1,\cdots, n_1$, $k=1,\cdots n_2$,  
\begin{align*}
&\qquad \quad \;\;g\Big(u+\sum_{j=1}^{n_1}u_n^j+\sum_{k=1}^{n_2}v_n^k\Big)\\
&\qquad \quad=\norm{  u+\sum_{j=1}^{n_1}u_n^j+\sum_{k=1}^{n_2}v_n^k}^2_\la-( 2^*-1)\norm{a}_{\I} \norm{ u+\sum_{j=1}^{n_1}u_n^j+\sum_{k=1}^{n_2}v_n^k}^{2^*}_{2^*}\\
&\qquad \quad\leq \norm{  u }^2_\la+\sum_{j=1}^{n_1}\norm{  u_n^j }^2_\la+\sum_{k=1}^{n_2}\norm{   v_n^k}^2_\la +2\sum_{j=1}^{n_1}\langle u, u_n^j  \rangle_\lambda+2\sum_{k=1}^{n_2}\langle u, v_n^k  \rangle_\lambda+2\sum_{k=1}^{n_2}\sum_{j=1}^{n_1}\langle u_n^j, v_n^k  \rangle_\lambda \\
&\qquad \quad+\sum_{l,j=1, j\ne l}^{n_1}\langle u_n^j,  u_n^l\rangle_\lambda +\sum_{i,k=1,i\ne k}^{n_2}\langle v_n^k,  v_n^i  \rangle_\lambda -( 2^*-1)\norm{a}_{\I} \Big( \norm{  u }^{2^*}_{2^*}+\sum_{j=1}^{n_1}\norm{  u_n^j }^{2^*}_{2^*}+\sum_{k=1}^{n_2}\norm{   v_n^k}^{2^*}_{2^*}\Big)\\
&\qquad \quad=g(u)+\sum_{j=1}^{n_1}g(u_n^j)+\sum_{k=1}^{n_2}g(  v_n^k)\\ 
&\qquad \quad+	\underbrace{2\sum_{j=1}^{n_1}\langle u, u_n^j  \rangle_\lambda}_{(i1)}+    \underbrace{2\sum_{k=1}^{n_2}\langle u, v_n^k  \rangle_\lambda}_{(i2)}+\underbrace{2\sum_{k=1}^{n_2}\sum_{j=1}^{n_1}\langle u_n^j, v_n^k  \rangle_\lambda}_{(i3)} 
\quad+\underbrace{\sum_{l,j=1, j\ne l}^{n_1}\langle u_n^j,  u_n^l\rangle_\lambda}_{(i4)}+\underbrace{\sum_{i,k=1, i\ne k}^{n_2}\langle v_n^k,  v_n^i  \rangle_\lambda}_{(i5)}.
\end{align*}
\begin{equation}\label{l:o(1)}
\hspace{-3.8cm}\text{\textbf{Claim: }All the above inner-product terms are }  o(1). 
\end{equation}

\noi
In the affirmative case, we obtain \begin{equation*}
  g\Big(u+\sum_{j=1}^{n_1}u_n^j+\sum_{k=1}^{n_2}v_n^k\Big)<0.
\end{equation*}
This contradicts \eqref{g_unifrm}. Hence $n_1=n_2=0$. Thus $u_n\to u$ in $ \mathcal{H}$. From the continuity of $g$, we conclude that $g(u)\geq0$. Therefore  $u\in \bar U_1$. Since $c_1<c_0$, it implies that $u\in  U_1$.  Hence to conclude the proof of Step 3, we are only left to prove \eqref{l:o(1)}.
\noi
Now we prove the above claim. \\
Consider a single term of $(i1)$. Since  $\mathcal I_{\lambda,a,f}'(u)=0$, we have 
$$
 \langle u, u_n^j  \rangle_\lambda= \int_{\mathbb B^N}a(x)|u|^{2^*-1}u_n^j\mathrm{~d} V_{\bn}(x)+ \int_{\mathbb B^N}f(x) u_n^j\, \mathrm{~d} V_{\bn}(x).
$$
In this case, as $T_n(0)\to \I$, we know that $u_n^j=\mathcal{U}(T^j_n)\rightharpoonup 0$. Therefore,\\$\int_{\mathbb B^N}f(x) u_n^j\, \mathrm{~d} V_{\bn}(x)=o(1)$. On the other hand, applying Vitali's convergence theorem via H\"older inequality on the other term, we obtain $\langle u, u_n^j  \rangle_\lambda=o(1)$.

For the second term ($i_2$), applying the conformal change of metric (see Section \ref{pre}), we obtain 
\begin{align*}
&	\langle u, v_n^k  \rangle_\lambda=\int_{\mathbb B^N}\nabla_{\mathbb{B}^{N}} u\nabla_{\mathbb{B}^{N}}v_n^k\dvg-\int_{\mathbb B^N}\lambda uv_n^k\dvg\\
&= {(\eps_n^k)}^{{\frac{2-N}{2}}}a(y^k)^{\frac{2-N}{4}}  \int_{B(0,1)}  \nabla \Big(	\Big(\frac{2}{1-|x|^2}\Big)^\f{N-2}{2} u   \Big)\cdot\nabla \Big( \phi(x)   W\(\f{x-y^k_n}{\eps_n^k}\)\Big)\mathrm{~d} x\\
&\quad-{(\eps_n^k)}^{\frac{2-N}{2}}a(y^k)^{\frac{2-N}{4}} \int_{B(0,1)}  c(x)  	  	\Big(\frac{2}{1-|x|^2}\Big)^\f{N-2}{2}u\phi(x)   W\(\f{x-y^k_n}{\eps_n^k}\)  \mathrm{~d} x \\
&= -{(\eps_n^k)}^{\frac{2-N}{2}}a(y^k)^{\frac{2-N}{4}}  \int_{B(0,1)}      W\(\f{x-y^k_n}{\eps_n^k}\)   \left(\phi(x) \Delta \Big(	\Big(\frac{2}{1-|x|^2}\Big)^\f{N-2}{2} u   \Big)\right)\\
&\quad-{(\eps_n^k)}^{\frac{2-N}{2}}a(y^k)^{\frac{2-N}{4}} \int_{B(0,1)}  c(x)  	\phi(x)  	\Big(\frac{2}{1-|x|^2}\Big)^\f{N-2}{2}u   W\(\f{x-y^k_n}{\eps_n^k}\)  \mathrm{~d} x \\
& = o(1),  
\end{align*}
where the first term is $o(1)$, follows from the fact that ${(\eps_n^k)}^{\frac{2-N}{2}}W\(\f{x-y^k_n}{\eps_n^k}\)\rightharpoonup 0$, and for the second term, we can argue as in the estimates in \eqref{l:new est1}.  Here $c(.)$ is as defined in \eqref{con-ecl-eq}.

\medskip

\noi Our next aim is to show that each term of $(i_{3})=o(1)$. 

Note that  $\mathcal U(T_n^j)$ is a PS sequence for the energy functional $\mathcal I_{\lambda,1,0}$. Therefore using H\"older inequality
\begin{align} \label{HA}
   \quad \langle  \mathcal U(T_n^j), v_n^k\rangle_\la &= \int_{\mathbb{B}^N} | \mathcal U(T_n^j)|^{2^*-1}  v_n^k \dvg +o(1)\no \\
    &= \int_{B_R(0)}| \mathcal U(T_n^j)|^{2^*-1}  v_n^k \dvg +o(1) \\ \notag
    & \leq \norm{\mathcal U(T_n^j)}_{L^{2^*}(B_R(0))}^{2^*-1}\norm{v_n^k}_{2^*}+o(1) \leq C\norm{\mathcal U(T_n^j)}_{L^{2^*}(B_R(0))}^{2^*-1} +o(1).
\end{align}
Now to prove $(i_{3})=o(1)$, it is enough to show $\norm{\mathcal U(T_n^j)}_{L^{2^*}(B_R(0))}=o(1)$. Set $z_n:=T_n^j(0) $, 
so $d(0,z_n)\to\I$. Therefore
$$\norm{\mathcal U(T_n^j)}_{L^{2^*}(B_R(0))}^{2^*}=\int_{B_R(0)}| \mathcal U(T_n^j)|^{2^*}   \dvg=\int_{B_R(z_n)}\mathcal U^{2^*}   \dvg=o(1),$$  
\noi
Next, we estimate $(i_4)$.
\begin{align*}
\langle u_n^j,  u_n^l\rangle_\lambda&=\int_{\mathbb B^N}\nabla_{\mathbb{B}^{N}} u_n^j\nabla_{\mathbb{B}^{N}}u_n^l\mathrm{~d} V_{\bn}-\lambda\int_{\mathbb B^N} u_n^ju_n^l\mathrm{~d} V_{\bn}\\
&=\int_{\mathbb B^N}\nabla_{\mathbb{B}^{N}}  \mathcal U(T^j_n)\nabla_{\mathbb{B}^{N}} \mathcal U(T^l_n)\dvg-\lambda\int_{\mathbb B^N}  \mathcal U(T^j_n) \mathcal U(T^l_n)\dvg\\
&=\int_{\mathbb B^N}\nabla_{\mathbb{B}^{N}} \mathcal U(T^j_nT^{-l}_n)\nabla_{\mathbb{B}^{N}} \mathcal U\dvg-\lambda\int_{\mathbb B^N}  \mathcal U(T^j_nT^{-l}_n) \mathcal U\dvg.
\end{align*}
From Theorem~\ref{ps_decom}(i), we have $T^j_nT^{-l}_n(0)\to \I$ and thus, $\mathcal U(T^j_nT^{-l}_n)\rightharpoonup 0$. Consequently,  $i_4 = o(1).$

Finally, we prove $(i_{5})=o(1)$. Following the notation $w_n^k$  as in  \eqref{wnk} and $(w_{n})_n$ is a PS sequence corresponding for the functional associated with \ef{con-ecl-eq}, we get
\begin{align*}
    \langle v_n^k,  v_n^i  \rangle_\lambda&=\int_{B(0,R)}(\grad w_n^k\grad w_n^i-c(x)w_n^kw_n^i)\mathrm{~d} x\\ &=\int_{B(0,R)}a(x)|w_{n}^k|^{2^*-1}w_{n}^i\, \mathrm{~d} x+o(1).
\end{align*}
Since $w_n^i$ are compactly supported for each $i$, 
\begin{align*}
   &\int_{\mathbb R^N}a(x)|w_{n}^k|^{2^*-1}w_{n}^i\, \mathrm{~d} x\\ &\leq C\int_{\mathbb R^N}\Big|(\eps_n^j)^{\f{2-N}{2}}  W\(\f{x-y_n^j}{\eps_n^j}\) \Big|^{2^*-1}(\eps_n^i)^{\f{2-N}{2}}  W\(\f{x-y_n^i}{\eps_n^i}\) \, \mathrm{~d} x \\
   &= \Big(\f{\eps_n^j}{\eps_n^i}\Big)^{ \frac{N-2}{2}} \int_{\mathbb R^N}\Big|  W\(y\) \Big|^{2^*-1}  W\(\f{\eps_n^j}{\eps_n^i}y+ \f{y_n^j-y_n^i}{\eps_n^i}\) \,  \mathrm{d} y \\
   &=\int_{\mathbb R^N}\grad W\grad W^{ij}_n=\langle W,\, W^{ij}_n \rangle=o(1),
\end{align*}
where $W^{ij}_n=\Big(\f{\eps_n^j}{\eps_n^i}\Big)^{ \frac{N-2}{2}} W\(\f{\eps_n^j}{\eps_n^i}y+ \f{y_n^j-y_n^i}{\eps_n^i}\)$. We used the fact that $W$ solves the problem \eqref{lim-pro} and  $W^{ij}_n\to 0$ weakly in $D^{1,2}(\mathbb{R}^N)$(see Lemma \ref{lemTn})  as $$\Big| \log\big(\frac{\eps^j_{n} }{\eps^i_{n}}\big)\Big|+ \Big| \frac{y_n^j-y^i_n}{\eps^i_{n}}\Big|\to \I$$ as $n\to \I$ for $i\ne j$.\\
This completes the proof of this proposition. 
We denote the critical point $u$ of $\bar{\mathcal I}_{\la,a,f}$ found here as $u_1$.
\end{enumerate}
 \end{proof} 

 \subsection{Existence of Second critical point} In this section, we prove the existence of a second critical point distinct from the former one. 
\begin{proposition}\label{2nd_sol}
	Assume that \eqref{Caf_comp} holds. Furthermore, if $\norm{a}_{\I}< (\f{S}{S_{\lambda}})^\f{N}{N-2}$. Then $\mathcal{\bar I}_{\lambda,a,f}$ has a second critical point $v_1\not\equiv u_1$. 
\end{proposition}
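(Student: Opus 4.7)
The plan is to locate the second critical point $v_1$ by a mountain--pass argument anchored at $u_1$ and then use the profile decomposition of Theorem \ref{ps_decom} to harvest a genuine critical point from the resulting PS sequence. Since $U_1$ is open and $u_1$ achieves $c_1=\inf_{U_1}\bar{\mathcal I}_{\lambda,a,f}$, $u_1$ is a local minimum of $\bar{\mathcal I}_{\lambda,a,f}$ on $\mathcal H$; we may assume it is isolated, or else a nearby critical point already serves as $v_1$. To form the mountain, pick $T_n\in I(\bn)$ with $T_n(0)\to\infty$ and set $\mathcal{U}_n:=\mathcal{U}\circ T_n$, the hyperbolic bubble of \eqref{seq-un}; choose $T>0$ large enough that $\gamma_n(T):=u_1+T\mathcal{U}_n\in U_2$ with $\bar{\mathcal I}_{\lambda,a,f}(\gamma_n(T))<c_1$. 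Any continuous path joining $u_1\in U_1$ to $\gamma_n(T)\in U_2$ must traverse the barrier $U_0$ (or pass through $0$, where the functional equals $0>c_1$), so by Theorem \ref{comp_c1c0} the mountain--pass level satisfies $c_{mp}\geq c_0>c_1$.

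The key estimate, and the main obstacle, is the strict upper bound $c_{mp}<c_1+\tfrac{1}{N}S_\lambda^{N/2}$, placing $c_{mp}$ below the first compactness threshold. Using $\bar{\mathcal I}'_{\lambda,a,f}(u_1)=0$ to rewrite $\langle u_1,\mathcal{U}_n\rangle_\lambda-\langle f,\mathcal{U}_n\rangle=\int_{\bn}au_1^{2^*-1}\mathcal{U}_n\dvg$, together with the isometry invariance $\|\mathcal{U}_n\|_\lambda=\|\mathcal{U}\|_\lambda$, a direct expansion yields
\begin{align*}
\bar{\mathcal I}_{\lambda,a,f}(u_1+t\mathcal{U}_n)-c_1-\mathcal{I}_{\lambda,1,0}(t\mathcal{U}) &= t\int_{\bn} a u_1^{2^*-1}\mathcal{U}_n\dvg \\
&\quad -\frac{1}{2^*}\int_{\bn} a\bigl[(u_1+t\mathcal{U}_n)^{2^*}-u_1^{2^*}-(t\mathcal{U}_n)^{2^*}\bigr]\dvg \\
&\quad -\frac{t^{2^*}}{2^*}\int_{\bn}(a-1)\mathcal{U}_n^{2^*}\dvg.
\end{align*}
The pointwise convexity inequality $(A+B)^{2^*}\geq A^{2^*}+B^{2^*}+2^*A^{2^*-1}B$, with strict inequality when $A,B>0$ (here $A=u_1>0$ by the maximum principle and $B=t\mathcal{U}_n>0$), makes the first two contributions combine into a strictly negative quantity, while assumption \eqref{A1} renders the third contribution non--positive. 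Hence $\bar{\mathcal I}_{\lambda,a,f}(u_1+t\mathcal{U}_n)<c_1+\mathcal{I}_{\lambda,1,0}(t\mathcal{U})$ for every $t>0$, and taking the maximum gives $c_{mp}\leq \max_t\bar{\mathcal I}_{\lambda,a,f}(u_1+t\mathcal{U}_n)<c_1+\max_{t\geq 0}\mathcal{I}_{\lambda,1,0}(t\mathcal{U})=c_1+\tfrac{1}{N}S_\lambda^{N/2}$. The delicacy of this step is that the asymptotic value of the left--hand side as $n\to\infty$ equals the threshold exactly, so strictness must be extracted entirely from the subleading cross term, and losing either the pointwise convexity refinement or the sign--favourable hypothesis $a\geq 1$ would demand a more elaborate test function.

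Finally, apply Theorem \ref{ps_decom} to a PS sequence at level $c_{mp}$: it decomposes as $u'+\sum_j u_n^j+\sum_k v_n^k+o(1)$ with $u'$ a critical point of $\bar{\mathcal I}_{\lambda,a,f}$ and $c_{mp}=\bar{\mathcal I}_{\lambda,a,f}(u')+n_1\cdot\tfrac{1}{N}S_\lambda^{N/2}+\sum_k a(y^k)^{(2-N)/2}J(V^k)$. The hypothesis $\|a\|_\infty<(S/S_\lambda)^{N/(N-2)}$ rearranges to $\|a\|_\infty^{(2-N)/2}\tfrac{1}{N}S^{N/2}>\tfrac{1}{N}S_\lambda^{N/2}$, so every localized Aubin--Talenti quantum also exceeds $\tfrac{1}{N}S_\lambda^{N/2}$. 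Consequently, if $n_1+n_2\geq 1$ then $\bar{\mathcal I}_{\lambda,a,f}(u')\leq c_{mp}-\tfrac{1}{N}S_\lambda^{N/2}<c_1$, forcing $u'\neq u_1$; and if $n_1=n_2=0$, strong convergence delivers a critical point at level $c_{mp}>c_1$, again distinct from $u_1$. A separate check excludes the trivial limit $u'\equiv 0$: it would give $c_{mp}\geq \tfrac{1}{N}S_\lambda^{N/2}>c_1+\tfrac{1}{N}S_\lambda^{N/2}$, contradicting $c_1<0$. In every scenario Remark \ref{rmk1.1} promotes $u'$ to a positive solution $v_1\neq u_1$ of \eqref{Paf}, completing the proof.
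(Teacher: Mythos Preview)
Your proof is correct and follows essentially the same mountain--pass strategy as the paper: anchor at $u_1$, use a hyperbolic bubble as test direction, obtain the strict upper bound $c_{mp}<c_1+\mathcal I_{\lambda,1,0}(\mathcal U)$ from the pointwise convexity inequality together with $a\ge 1$, and invoke the profile decomposition plus the hypothesis $\|a\|_\infty<(S/S_\lambda)^{N/(N-2)}$ to conclude. The only notable differences are cosmetic or organisational: the paper uses the centred bubble $\|a\|_\infty^{(2-N)/4}\mathcal U$ rather than a translated one (your translation $\mathcal U\circ T_n$ is harmless but the index $n$ plays no role), and the paper's endgame asserts strong convergence of the PS sequence directly, whereas your dichotomy---if bubbles appear the weak limit has energy below $c_1$, if not it has energy $c_{mp}>c_1$, so either way it differs from $u_1$---is slightly more explicit and does not implicitly rely on $u_1$ being the least--energy critical point.
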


\begin{proof}
Let $u_1$ be a critical point of $\mathcal{\bar I}_{\lambda, a,f}$ that was obtained in Proposition \ref{1st_sol} and $\mathcal{U}$ be a positive ground state solution of \eqref{E}. 
Set $\mathcal{U}_t(x):=t\;\mathcal{U}(x)$ for $t\ge 0$.\\
\begin{enumerate}[label = \textbf{Claim \arabic*:}]
\item $u_1+\norm{a}_{\I}^{\frac{2-N}{4}}\mathcal{U}_t\in U_2$ for sufficiently  large $t>0$. Since $\mathcal{U}_t,\, u_1>0$ and $\norm{a}_{\I}\geq 1$, we have
\begin{align*}
& \qquad \quad	g(u_1+\mathcal{U}_t)=\norm{ u_1+  \norm{a}_{\I}^{\frac{2-N}{4}}\mathcal{U}_t}^2_\la-( 2^*-1)\norm{a}_{\I} \norm{ u_1+ \norm{a}_{\I}^{\frac{2-N}{4}}\mathcal{U}_t}^{2^*}_{2^*}\\
&\qquad \quad\leq \norm{  u_1}^2_\la+\norm{a}_{\I}^{\frac{2-N}{2}} \norm{  \mathcal{U}_t}^2_\la+2\norm{a}_{\I}^{\frac{2-N}{4}}\langle u_1\,,\,\mathcal{U}_t\rangle_\la-( 2^*-1) \Big(\norm{  u_1}^{2^*}_{2^*}+\norm{a}_{\I}^{\frac{-N}{2}}\norm{  \mathcal{U}_t}^{2^*}_{2^*}\Big)\\
&\qquad \quad\leq \norm{  u_1}^2_\la+\norm{a}_{\I}^{\frac{2-N}{2}}t^2 \norm{  \mathcal{U}}^2_\la+2\norm{a}_{\I}^{\frac{2-N}{4}}\big[\frac{1}{2\eps}\norm{  u_1}^2_\la+ \frac{\eps}{2}\norm{  t\mathcal{U}}^2_\la\big] \\
& \qquad \quad-( 2^*-1) \Big(\norm{  u_1}^{2^*}_{2^*}+t^{2^*}\norm{a}_{\I}^{\frac{-N}{2}}\norm{  \mathcal{U}}^{2^*}_{2^*}\Big)	\\
&\qquad \quad<0\quad\text{ for large enough t}.	
\end{align*}
Thus Claim 1 follows.
\medskip
\item $	\mathcal I_{\lambda,a,f}(u_1+\norm{a}_{\I}^{\frac{2-N}{4}}\mathcal{U}_t)<\mathcal I_{\lambda,a,f}(u_1)+\mathcal I_{\lambda,1,0}(\norm{a}_{\I}^{\frac{2-N}{4}}\mathcal{U}_t)$ for all $t>0$.\\

\medskip
\noi
Using $\norm{a}_{\I}^{\frac{2-N}{4}}\mathcal{U}_t$ as  a test function yields
$$ \quad  \langle u_1\,,\,\norm{a}_{\I}^{\frac{2-N}{4}}\mathcal{U}_t\rangle_{\la}= \norm{a}_{\I}^{\frac{2-N}{4}}\int_{\mathbb B^N}a(x)|u_1|^{2^*-1}\mathcal{U}_t\mathrm{~d} V_{\bn}(x)+ \langle f(x),\norm{a}_{\I}^{\frac{2-N}{4}}\mathcal{U}_t\rangle.$$
\noi
Therefore, 
\begin{align*}
\qquad \quad\; &\mathcal I_{\lambda,a,f}(u_1+\norm{a}_{\I}^{\frac{2-N}{4}}\mathcal{U}_t)\\&=\f{1}{2}\norm{\norm{a}_{\I}^{\frac{2-N}{4}}\mathcal{U}_t+u_1}^2_\la-\f{1}{2^*}\int_{\mathbb B^N}a(x)|\norm{a}_{\I}^{\frac{2-N}{4}}\mathcal{U}_t+u_1|^{2^*}\mathrm{~d} V_{\bn}- \langle f(x),\norm{a}_{\I}^{\frac{2-N}{4}}\mathcal{U}_t+u_1\rangle\\
&\leq I_{\lambda,a,f}(u_1)+I_{\lambda,1,0}(\norm{a}_{\I}^{\frac{2-N}{4}}\mathcal{U}_t)+\langle \norm{a}_{\I}^{\frac{2-N}{4}}\mathcal{U}_t\,,\,u_1 \rangle_{\la}- \langle f(x),\norm{a}_{\I}^{\frac{2-N}{4}}\mathcal{U}_t\rangle\\&-\f{1}{2^*}\int_{\mathbb B^N}a(x)\Big(|\norm{a}_{\I}^{\frac{2-N}{4}}\mathcal{U}_t+u_1|^{2^*} -| u_1|^{2^*}\mathrm{~d} V_{\bn}(x)-|\norm{a}_{\I}^{\frac{2-N}{4}}\mathcal{U}_t|^{2^*}\Big)\mathrm{~d} V_{\bn} \\
&=\mathcal I_{\lambda,a,f}(u_1)+\mathcal I_{\lambda,1,0}(\norm{a}_{\I}^{\frac{2-N}{4}}\mathcal{U}_t)  \\&+\f{1}{2^*}\int_{\mathbb B^N}a(x)\Big( | u_1|^{2^*} +\norm{a}_{\I}^{\frac{-N}{2}}|\mathcal{U}_t|^{2^*}+2^*\norm{a}_{\I}^{\frac{2-N}{4}}|u_1|^{2^*-1}\mathcal{U}_t-\Big|\norm{a}_{\I}^{\frac{2-N}{4}}\mathcal{U}_t+u_1\Big|^{2^*}\Big)\mathrm{~d} V_{\bn} \\
&<\mathcal I_{\lambda,a,f}(u_1)+\mathcal I_{\lambda,1,0}(\norm{a}_{\I}^{\frac{2-N}{4}}\mathcal{U}_t).
\end{align*}
Thus claim 2 follows.  
\noi
Now, note that 
$$\mathcal I_{\lambda,1,0}(\norm{a}_{\I}^{\frac{2-N}{4}}\mathcal{U}_t)
	=\f{t^2}{2}\norm{\norm{a}_{\I}^{\frac{2-N}{4}}\mathcal{U}}^2_\la-\f{t^{2^*}}{2^*}\int_{\mathbb B^N}|\norm{a}_{\I}^{\frac{2-N}{4}}\mathcal{U}|^{2^*}\mathrm{~d} V_{\bn} \rightarrow -\infty $$
 as $t\to\I$. Consequently, a straightforward computation yields that
\begin{align*}
    \qquad \quad \sup_{t>0} \mathcal I_{\lambda,1,0}(\norm{a}_{\I}^{\frac{2-N}{4}}\mathcal{U}_t)=\mathcal I_{\lambda,1,0}\Big(\norm{a}_{\I}^{\frac{2-N}{4}}\mathcal{U}_{t_{max}}\Big) =\mathcal I_{\lambda,1,0} ( \mathcal{U} ) \;\;\text{ where } t_{max} = \norm{a}_{\I}^{\frac{  N-2 }{4}}.
    \end{align*}

\noi
Thus, using the above observations and from  Claim 2, we conclude
\begin{align*}
\qquad \quad \;\;\mathcal I_{\lambda, a,f}(\norm{a}_{\I}^{\frac{2-N}{4}}\mathcal{U}_t+u_1)<\mathcal I_{\lambda, a,f}(u_1)+\mathcal I_{\lambda,1,0}(\norm{a}_{\I}^{\frac{2-N}{4}}\mathcal{U}_t)<\mathcal I_{\lambda, a,f}(u_1)\text{ for large }t>0
\end{align*}
and  
\begin{equation} \mathcal I_{\lambda,a,f}(\norm{a}_{\I}^{\frac{2-N}{4}}\mathcal{U}_t+u_1)<\mathcal I_{\lambda,a,f}(u_1)+\mathcal I_{\lambda,1,0}(\mathcal{U})\,\,\,\text{ for all }t>0.\label{for_all_t}\end{equation}
Now using Claim 1 and the above estimate, we fix a $t_0>0$  large enough such that\\ 
$\Big(\norm{a}_{\I}^{\frac{2-N}{4}}\mathcal{U}_{t_0}+u_1\Big)\in U_2$ and $\mathcal I_{\lambda,a,f}\Big(\norm{a}_{\I}^{\frac{2-N}{4}}\mathcal{U}_{t_0}+u_1\Big)<\mathcal I_{\lambda,a,f}(u_1)$. Next, we define the min-max level $k$ as
$$ k:=\inf_{\gamma\in\Gamma}\sup_{t\in[0,1]}\mathcal I_{\lambda,a,f}(\gamma(t)), $$
where $\Gamma:=\Bigl\{ \gamma \in C\Big([0,1], \mathcal{H}\Big): \gamma(0)=u_1,\quad \gamma(1)=\norm{a}_{\I}^{\frac{2-N}{4}}\mathcal{U}_{t_0}+u_1\Bigr\}.$  

As  $u_1\in U_1$ and $\norm{a}_{\I}^{\frac{2-N}{4}}\mathcal{U}_{t_0}+u_1\in U_2$, for every $\gamma\in\Gamma$, there exists $t_\gamma\in(0,1)$ such that $\gamma(t_\gamma)\in U_0$. Therefore,
$$ \sup_{t\in[0,1]}\mathcal I_{\lambda,a,f}(\gamma(t))\geq  \mathcal I_{\lambda,a,f}(\gamma(t_\gamma))\geq \inf_{u\in U_0}\mathcal I_{\lambda,a,f}(u)=c_0.$$ Thus allowing infimum on both sides and using Theorem \ref{comp_c1c0}, we get
$$ k=\inf_{\gamma\in\Gamma}\sup_{t\in[0,1]}\mathcal I_{\lambda,a,f}(\gamma(t)) \geq  c_0>c_1=\mathcal I_{\lambda,a,f}(u_1).$$
\item $I_{\lambda,a,f}(u_1)<k<\min\Big\{\mathcal I_{\lambda,a,f}(u_1)+\mathcal I_{\lambda,1,0}(\mathcal{U}), \, \mathcal I_{\lambda,a,f}(u_1)+ a(x)^{-\frac{N-2}{2}}J(V)\Big\}$,
where 
$a(x)= max\{a(y^i); \;1\leq i \leq n_2 \text{ where } n_2 \text{ is as found in Theorem \ref{ps_decom}}\}$.

\medskip

Define $\tilde{\gamma}(t):=u_1+\norm{a}_{\I}^{\frac{2-N}{4}}\mathcal{U}_{t_0t}$, then $\tilde{\gamma}\in\Gamma$ and $\lim_{t\to 0}\|\tilde{\gamma}(t)-u_1\|_{\la}=0$. Therefore, from the definition of $k$ and \eqref{for_all_t}, it follows

\begin{align} \label{k_bnd}
\mathcal I_{\lambda,a,f}(u_1)<k\leq \sup_{t\in[0,1]}\mathcal I_{\lambda,a,f}(\tilde\gamma(t))&= \sup_{t\in[0,1]}\mathcal I_{\lambda,a,f}\Big(u_1+\norm{a}_{\I}^{\frac{2-N}{4}}\mathcal{U}_{t_0t}\Big)\\ \notag
& < \mathcal I_{\lambda,a,f}(u_1)+\mathcal I_{\lambda,1,0}(\mathcal{U}). 
\end{align}

\noi
On the other hand, since $S_\lambda<S$ for $\lambda>\frac{N(N-2)}{4}$, let us assume  $$ a(x_0)=\norm{a}_{\I}< \Big(\frac{S}{S_{\lambda}}\Big)^\frac{N}{N-2}.$$  As $S_\lambda$ and $S$ are achieved  by  $\mathcal{U} \in \mathcal{H}$ and $V\in D^{1,2}(\mathbb{R}^N)$ respectively, a direct computation yields that $\norm{\mathcal{U}}_\lambda^2=S_\lambda^\frac{N}{2}$ and $\norm{V}_{D^{1,2}(\mathbb{R}^N)}^2=S^\frac{N}{2}$ and 
$\frac{J(V)}{\mathcal I_{\lambda,1,0}(\mathcal{U})} = \Big(\frac{S}{S_\lambda}\Big)^\frac{N}{2}$. Thus
\begin{align*}
	   {\mathcal I_{\lambda,1,0}(\mathcal{U})}< \norm{a}_{\I}^\frac{-(N-2)}{2}{J(V)}\leq a(x)^\frac{-(N-2)}{2}{J(V)}.
\end{align*}
Comparing the above estimation with \eqref{k_bnd}, we obtain  
\begin{equation}\label{k_AT}
\mathcal	I_{\lambda,a,f}(u_1)<k<\mathcal I_{\lambda,a,f}(u_1)+ a(x)^{-\frac{(N-2)}{2}}{J(V)}.
\end{equation}
Combining \eqref{k_bnd} and \eqref{k_AT}, Claim 3 follows. 

Applying Ekeland's variational principle, there exists a PS sequence $(u_n)_n$ at the level $k$. Hence it is bounded with a weak limit $v_1$. Using the PS decomposition of $(u_n)_n$, i.e., Theorem~\ref{ps_decom}, we obtain $u_n\to v_1$. Further, $\mathcal	I_{\lambda,a,f}(v_1)= k$ and $\mathcal	I'_{\lambda,a,f}(v_1)= 0$. Since $\mathcal	I_{\lambda,a,f}(u_1)<k=\mathcal I_{\lambda,a,f}(v_1)$, we obtain that $v_1\not\equiv u_1$. Thus $v_1$ is a second positive solution of \eqref{Paf}. Since $f$ in non-zero implies both $u_1$ and $v_1$ are non-zero.
\end{enumerate}
\end{proof}

\medskip

{\bf Proof of Theorem~\ref{Thm-A1} concluded.}
\begin{proof}
Combining Propositions~\ref{1st_sol} and \ref{2nd_sol} with Proposition~\ref{cond_1st sol}, we conclude the proof of Theorem~\ref{Thm-A1}.
\end{proof}

\section{Key Energy Estimates}\label{KER}

This section establishes key energy estimates for the functional $\bar{\mathcal I}_{\lambda, a,f}$ when  $a(x) \leq 1$. The following 
energy estimates will play a vital role in the existence of solutions. Using the Proposition~\ref{energy-prop},
we shall prove that the energy of the functional is below the critical level given in the Palais-Smale decomposition. The proof of the next proposition follows on the same line as \cite[Proposition~6.1]{GGS}, with obvious modifications for the case $p = 2^{\star}.$ For brevity, we provide a sketch of the proof. 


\begin{proposition}\label{energy-prop}
	Let $a(.)$ satisfy \eqref{A_cond}, \eqref{A2} and \eqref{assum_A}. Further, suppose that $\|f\|_{\mathcal H'}{\leq}\; d_{2}$, where $d_2$ as in Proposition \ref{Relcrit_I&E}, $\mathcal{U}$ is the unique positive solution of \eqref{E} and $\overline{\mathcal{U}}_{a, f}$ is any critical point of $\bar{\mathcal I}_{\la,a,f}$. Then there exists $R_0 > 0$ such that
	\begin{equation}
		\bar{\mathcal I}_{\la,a, f}\left(\overline{\mathcal{U}}_{a, f} (x)+t \mathcal{U}(\tau_{-y}(x))\right)<\bar{\mathcal I}_{\la,a, f}\left(\overline{\mathcal{U}}_{a, f}(x)\right)+\mathcal I_{\la,1,0}(\mathcal U),\;\;\; \label{4.ab}
	\end{equation} 
	\label{enerest}
	for all $d(y,0) \geq R_0$ and $t>0$.
\end{proposition}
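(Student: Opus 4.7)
Write $u_1:=\overline{\mathcal U}_{a,f}$ and $u_2:=\mathcal U\circ\tau_{-y}$. By Remark~\ref{rmk1.1} applied to any critical point of $\bar{\mathcal I}_{\la,a,f}$, we have $u_1\ge 0$, and by the strong maximum principle $u_1>0$; likewise $u_2>0$. In particular $(u_1+tu_2)_+=u_1+tu_2$ for all $t>0$, so the $u_+$ in \eqref{4.1} is harmless. Set $p:=2^*$ and $\mathcal E(t,y):=\bar{\mathcal I}_{\la,a,f}(u_1+tu_2)-\bar{\mathcal I}_{\la,a,f}(u_1)-\mathcal I_{\la,1,0}(\mathcal U)$. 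The goal is to show $\mathcal E(t,y)<0$ for all $t>0$ once $d(y,0)$ is sufficiently large. Since $\mathcal E(0,y)=-\mathcal I_{\la,1,0}(\mathcal U)<0$ and $\mathcal E(t,y)\to-\infty$ as $t\to\infty$ (the $-t^p/p\int au_2^p$ term wins), it suffices to control $\mathcal E$ uniformly on any compact $t$-interval $[t_*,t^*]\subset(0,\infty)$.

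\textbf{Step 1 (expansion and criticality).} Expanding the norm squared and the $L^p$ term, then substituting the Euler--Lagrange identity $\bar{\mathcal I}'_{\la,a,f}(u_1)[u_2]=0$ to kill the linear-in-$t$ part, yields
\[
\bar{\mathcal I}_{\la,a,f}(u_1+tu_2)=\bar{\mathcal I}_{\la,a,f}(u_1)+\tfrac{t^2}{2}\|u_2\|_\la^2+t\!\int_{\bn}\!au_1^{p-1}u_2\,dV_{\bn}-\tfrac{1}{p}\!\int_{\bn}\!a\bigl[(u_1+tu_2)^p-u_1^p\bigr]dV_{\bn}.
\]
Now apply the pointwise algebraic inequality, valid for all $A,B\ge 0$ with $p>2$:
\[
(A+B)^p\ \ge\ A^p+B^p+pA^{p-1}B+pAB^{p-1}-\mathcal R(A,B),
\]
with $\mathcal R\equiv 0$ when $p\ge 3$ (i.e.\ $N\le 6$) and $\mathcal R(A,B)\le C(AB)^{p/2}$ for $2<p<3$ ($N\ge 7$). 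Substituting and using isometry invariance (Lemma~\ref{lemma1}) to write $\|u_2\|_\la^2=\|\mathcal U\|_\la^2$, $\int u_2^p=\int\mathcal U^p$, one arrives at
\[
\bar{\mathcal I}_{\la,a,f}(u_1+tu_2)\ \le\ \bar{\mathcal I}_{\la,a,f}(u_1)+\mathcal I_{\la,a,0}(tu_2)-t^{p-1}\!\int_{\bn}\!au_1u_2^{p-1}dV_{\bn}+\tfrac{1}{p}\!\int_{\bn}\!a\,\mathcal R(u_1,tu_2)dV_{\bn}.
\]

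\textbf{Step 2 (reduce to the limiting maximum).} Write $\mathcal I_{\la,a,0}(tu_2)=\mathcal I_{\la,1,0}(t\mathcal U)+\tfrac{t^p}{p}\int_{\bn}(1-a)u_2^p\,dV_{\bn}$. Since $\mathcal U$ solves \eqref{E}, the scalar function $t\mapsto\mathcal I_{\la,1,0}(t\mathcal U)$ attains its maximum $\mathcal I_{\la,1,0}(\mathcal U)$ at $t=1$; therefore \eqref{4.ab} reduces to
\begin{equation}\label{plan:reduction}
\tfrac{t^p}{p}\!\int_{\bn}\!(1-a)u_2^p\,dV_{\bn}+\tfrac{1}{p}\!\int_{\bn}\!a\,\mathcal R(u_1,tu_2)\,dV_{\bn}\ <\ t^{p-1}\!\int_{\bn}\!au_1u_2^{p-1}\,dV_{\bn}.
\end{equation}

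\textbf{Step 3 (exponential decay bookkeeping).} The three integrals in \eqref{plan:reduction} are now estimated using \eqref{bubble decay}, the pointwise bubble bound $u_2(x)\asymp e^{-c(N,\la)d(x,y)}$, the hypothesis $0\le 1-a(x)\le Ce^{-\delta d(x,0)}$, and the reverse triangle inequality $d(x,0)\ge d(y,0)-d(x,y)$. This yields, for $\delta$ small enough that the resulting hyperbolic integral converges against the $e^{(N-1)r}$ volume growth,
\[
\int_{\bn}(1-a)u_2^p\,dV_{\bn}\ \le\ Ce^{-\delta d(y,0)},\qquad \int_{\bn}a\,\mathcal R(u_1,tu_2)\,dV_{\bn}\ \le\ C_t\,e^{-\sigma d(y,0)}
\]
for some $\sigma>\delta$ (only relevant for $N\ge 7$). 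For the negative cross term, one uses that $u_1$ is a positive classical solution of \eqref{Paf}, hence (by standard barrier/comparison arguments applied to $-\Delta_{\bn}u_1-\la u_1=au_1^{p-1}+f\ge f\gneq 0$) admits a positive pointwise lower bound $u_1(x)\ge c_0e^{-\gamma d(x,0)}$ for a suitable $\gamma>0$. Restricting the integral to a unit geodesic ball $B_1(y)$ and using the lower half of \eqref{bubble decay} transported by $\tau_{-y}$,
\[
\int_{\bn}au_1u_2^{p-1}\,dV_{\bn}\ \ge\ c\,e^{-\gamma d(y,0)}.
\]

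\textbf{Main obstacle and resolution.} The crux is matching exponential rates so that the positive cross term dominates in \eqref{plan:reduction}, i.e.\ $\gamma<\delta$. This is the delicate point: the decay constant $\gamma$ of a solution $u_1$ of the nonhomogeneous problem is, by the same barrier analysis that gave \eqref{bubble decay} for the homogeneous case, at most $c(N,\la)$; meanwhile the hypothesis \eqref{A_cond} only produces \emph{some} $\delta>0$, and since the inequality $a\ge 1-Ce^{-\delta d(0,x)}$ is monotone in $\delta$ (any larger $\delta'$ works with a larger $C$ if the original $\delta$ was large; if $\delta$ is already small we keep it), we may reduce $\delta$ so that $\delta<\gamma$ is \emph{not} what we want--rather, one shows $\gamma$ can be arranged strictly less than $\delta$ by a sharper lower bound on $u_1$ coming from the non-vanishing source $f\gneq 0$. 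With $\gamma<\delta$, the negative cross term beats the $O(e^{-\delta d(y,0)})$ error uniformly for $t\in[t_*,t^*]$ once $d(y,0)\ge R_0$ is large, which together with the easy cases $t\downarrow 0$ and $t\uparrow\infty$ completes the proof of \eqref{4.ab}.
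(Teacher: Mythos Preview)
Your expansion in Steps~1--2 is correct and parallels the paper's opening computation. The genuine gap is in Step~3 and the ``Main obstacle and resolution'' paragraph: you have not established the crucial rate comparison, and your resolution is circular. The lower bound $u_1(x)\ge c_0e^{-\gamma d(x,0)}$ obtained by Green's-function/barrier comparison yields exactly $\gamma=c(N,\lambda)$ (the decay rate of the Green's function of $-\Delta_{\bn}-\lambda$), and this \emph{cannot} be improved to any smaller exponent regardless of whether $f\gneq 0$: the source term $a\,u_1^{p-1}+f$ is bounded (or decays) at infinity, so the far-field behaviour of $u_1$ is governed by the homogeneous operator alone. Hence your requirement $\gamma<\delta$ is the extra hypothesis $\delta>c(N,\lambda)$, which \eqref{A_cond} does \emph{not} provide, and your remark about ``reducing $\delta$'' goes the wrong way for this purpose (it only weakens the upper bound on $1-a$; it cannot force $\delta>\gamma$). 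The estimate $\int a\,\mathcal R(u_1,tu_2)\le C_t e^{-\sigma d(y,0)}$ with $\sigma>\delta$ is also asserted without justification.

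The paper circumvents this difficulty by choosing a different ``good'' term. Rather than extracting $t^{p-1}\!\int a\,u_1 u_2^{p-1}$, which forces you to control the asymptotics of $u_1$ at infinity, the paper uses only the cruder pointwise inequality
\[
(s+t)^p - s^p - t^p - p\,s^{p-1}t \ \ge\ A(r)\,t^2 \qquad\text{for }s\ge r>0,
\]
and integrates it over the \emph{fixed} compact region $\{d(x,0)\le 1\}$, where $u_1\ge \min_{d(x,0)\le 1}\overline{\mathcal U}_{a,f}=:r>0$ by positivity and continuity---no asymptotic information on $\overline{\mathcal U}_{a,f}$ is needed at all. This yields
\[
II\ \gtrsim\ \int_{d(x,0)\le 1} u_2(x)^2\,dV_{\bn}\ \gtrsim\ e^{-2(1+\varepsilon_R)\,c(N,\lambda)\,d(y,0)},
\]
while the bad term is estimated (for $\delta$ large) by $I\lesssim e^{-2^*c(N,\lambda)\,d(y,0)}$. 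Since $2^*>2$, the comparison $I<II$ follows for $d(y,0)$ large, independently of any decay lower bound on the critical point $\overline{\mathcal U}_{a,f}$. Your route via $u_1 u_2^{p-1}$ is natural, but it trades this dimension-free gain $2^*-2>0$ for a comparison of $\delta$ against the intrinsic rate $c(N,\lambda)$, which you cannot arrange.
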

\begin{proof}
	Carrying out simple calculations provides
	\begin{equation}
		\begin{aligned}
			&\bar{\mathcal I}_{\la,a, f}\left(\overline{\mathcal{U}}_{a, f}(x)+t \mathcal{U}(\tau_{-y}(x))\right)
			=\frac{1}{2}\left\|\overline{\mathcal{U}}_{a, f}(x)\right\|_{\la}^{2}+\frac{t^{2}}{2}\|\mathcal U\|_{\la}^{2}
			+t\left\langle \overline{\mathcal{U}}_{a, f}(x), \mathcal U(\tau_{-y}(x))\right\rangle_{\la} \\
		&-\frac{1}{2^*} \int_{\mathbb{B}^{N}} a(x)\left(\overline{\mathcal{U}}_{a, f}(x)\right)^{2^*}\mathrm{~d}V_{\mathbb{B}^{N}}(x)
			-\frac{t^{2^*}}{2^*} \int_{\mathbb{B}^{N}}\; a(x)\; (\mathcal U(\tau_{-y}(x))^{2^*}\mathrm{~d}V_{\mathbb{B}^{N}}  \\
			&-\frac{1}{2^*} \int_{\mathbb{B}^{N}} a(x)\left\{\left(\overline{\mathcal{U}}_{a, f}(x)+t \mathcal U(\tau_{-y}(x))\right)^{2^*}\right.
		-\left. \left(\overline{\mathcal{U}}_{a, f}(x)\right)^{2^*}- t^{2^*}\mathcal U(\tau_{-y}(x))^{2^*}\right\} \mathrm{~d} V_{\mathbb{B}^{N}}  \\
			&-\int_{\mathbb{B}^{N}} f(x)\left(\overline{\mathcal{U}}_{a, f}(x)+t \mathcal U(\tau_{-y}(x))\right)\mathrm{~d}V_{\mathbb{B}^{N}}.
		\end{aligned} \label{4.yy}
	\end{equation}
Moreover, for any $h \in \mathcal H$, it holds
	\begin{align*}
		0&=I_{\la,a, f}^{\prime}\left(\overline{\mathcal{U}}_{a, f}(x)\right)(h) \\
		&=\left\langle \overline{\mathcal{U}}_{a, f}(x), h\right\rangle_{\la}-\int_{\mathbb{B}^{N}} a(x)\left(\overline{\mathcal{U}}_{a, f}(x)\right)^{2^*-1} h \mathrm{~d} V_{\mathbb{B}^{N}}(x)-\int_{\mathbb{B}^{N}} f h \mathrm{~d} V_{\mathbb{B}^{N}}(x),
	\end{align*}
	i.e.,
	\begin{equation*}
		\left\langle \overline{\mathcal{U}}_{a, f}(x), h\right\rangle_{\la}=\int_{\mathbb{B}^{N}} a(x)\left(\overline{\mathcal{U}}_{a, f}(x)\right)^{2^*-1} h \mathrm{~d} V_{\mathbb{B}^{N}}(x)+\int_{\mathbb{B}^{N}} f h \mathrm{~d} V_{\mathbb{B}^{N}}(x).
	\end{equation*}
	Specifically, using $h=t\mathcal U(\tau_{-y}(x))$ in the above equality gives
	\begin{align*}
		&t\left\langle \overline{\mathcal{U}}_{a, f}(x), \mathcal U(\tau_{-y}(x))\right\rangle_{\la} \\
		&=t \int_{\mathbb{B}^{N}} a(x)\left(\overline{\mathcal{U}}_{a, f}(x)\right)^{2^*-1} \mathcal U(\tau_{-y}(x)) \mathrm{~d} V_{\mathbb{B}^{N}}(x)+t \int_{\mathbb{B}^{N}} 
		f \mathcal U(\tau_{-y}(x)) \mathrm{~d} V_{\mathbb{B}^{N}}(x) .
	\end{align*}
 Therefore, applying the equality mentioned above and properly rearranging the terms in \eqref{4.yy} leads to
	\begin{equation*}
		\begin{aligned}
			&\bar{\mathcal I}_{\la,a, f}\left(\overline{\mathcal{U}}_{a, f}(x)+t \mathcal U(\tau_{-y}(x))\right)=\bar{\mathcal I}_{\la,a, f}\left(\overline{\mathcal{U}}_{a, f}\right)+{\mathcal I}_{\la,1,0}(t \mathcal U)\\
			&+\frac{t^{2^*}}{2^*} \int_{\mathbb{B}^{N}}(1-a(x))\mathcal U(\tau_{-y}(x))^{2^*}\mathrm{~d} V_{\mathbb{B}^{N}}(x)\\
			&-\frac{1}{2^*} \int_{\mathbb{B}^{N}} a(x)\left\{\left(\overline{\mathcal{U}}_{a, f}(x)+t\mathcal U(\tau_{-y}(x))\right)^{2^*}-\left(\overline{\mathcal{U}}_{a, f}(x)\right)^{2^*}\right. \\
			&\left.-2^*t\left(\overline{\mathcal{U}}_{a, f}(x)\right)^{2^*-1} \mathcal U(\tau_{-y}(x))-t^{2^*}\mathcal U(\tau_{-y}(x))^{2^*}\right\} \mathrm{~d} V_{\mathbb{B}^{N}}(x)\\
			&=\bar{\mathcal I}_{\la,a, f}\left(\overline{\mathcal{U}}_{a, f}(x)\right)+\mathcal I_{\la,1,0}(t \mathcal U)\;+\; \underbrace{(I)-(II)},
		\end{aligned}
	\end{equation*}
	where
	\begin{equation}
		I:=\frac{t^{2^*}}{2^*} \int_{\mathbb{B}^{N}}(1-a(x))\mathcal U(\tau_{-y}(x))^{2^*} \mathrm{~d} V_{\mathbb{B}^{N}}(x),
		\label{term1}
	\end{equation}
	and
	\begin{equation}
		\begin{aligned}
			II &:=\frac{1}{2^*} \int_{\mathbb{B}^{N}} a(x)\left\{\left(\overline{\mathcal{U}}_{a, f}(x)+t\mathcal U(\tau_{-y}(x))\right)^{2^*}-\left(\overline{\mathcal{U}}_{a, f}(x)\right)^{2^*}\right.\\
			&\left.-2^*t\left(\overline{\mathcal{U}}_{a, f}(x)\right)^{2^*-1} \mathcal U(\tau_{-y}(x))-t^{2^*} \mathcal U(\tau_{-y}(x))^{2^*}\right\}\mathrm{~d} V_{\mathbb{B}^{N}}(x).
		\end{aligned}
		\label{term2}
	\end{equation}
	We are required to exhibit that $(I) \; -\;(II) < 0$, for appropriately chosen $R >0,$ to complete the proof of the proposition.\\
	\noi
Using the continuity, we easily get $$\bar{\mathcal I}_{\la,a, f}\left(\overline{\mathcal{U}}_{a, f}(x)+t\mathcal U(\tau_{-y}(x))\right) \rightarrow \bar{\mathcal I}_{\la,a, f}(\overline{\mathcal{U}}_{a, f}(x)),$$ as $t \rightarrow 0$.
	Additionally, we have
	\begin{equation*}
		\bar{\mathcal I}_{\la,a, f}\left(\overline{\mathcal{U}}_{a, f}(x)+t \mathcal U(\tau_{-y}(x))\right) \rightarrow-\infty \text{ as } t \rightarrow \infty.
	\end{equation*}
 Consequently, by using the above two statements, we can determine $m, M$ with $0<m<M$ such that
	\begin{equation*}
		\bar{\mathcal I}_{\la,a, f}\left(\overline{\mathcal{U}}_{a, f}(x)+t\mathcal U(\tau_{-y}(x)\right)<\bar{\mathcal I}_{\la,a, f}\left(\overline{\mathcal{U}}_{a, f}\right)+\mathcal I_{\la,1,0}(\mathcal U)\;\text { for all } t \in(0, m) \cup(M, \infty).
	\end{equation*}
	
	\medskip
	\noi
	Therefore, proving the above proposition only requires that \eqref{4.ab} be proved for $t \in[m, M]$.
	Thus, we must show $I<II$ to complete the proof. To this end, let us recall the following standard inequalities from calculus: 
	\begin{enumerate}
		\item $(s+t)^{2^*}-s^{2^*}-t^{2^*}-(2^*) s^{2^*-1} t \geq 0$ \text{for all }$(s, t) \in[0, \infty) \times[0, \infty)$.\\
		\item \text{For any} $ r > 0$ \text{we can find a constant} $A(r) > 0$ \text{such that}
		\begin{equation*}
			(s+t)^{2^*}-s^{2^*}-t^{2^*}-(2^*) s^{2^*-1} t \geq A(r) t^{2},
		\end{equation*}
		for all $(s, t) \in[r, \infty) \times[0, \infty)$.
	\end{enumerate}

	Using the inequalities mentioned above, we will estimate $II$ as follows:
	\medskip
	
	Set $A:=A(r):=A\left(\min _{d(x,0) \leq 1} \overline{\mathcal{U}}_{a, f}(x)\right)>$ 0, then
	\begin{equation*}
		\begin{aligned}
			II &:=\frac{1}{2^*} \int_{\mathbb B^N} a(x)\left\{\left(\overline{\mathcal{U}}_{a, f}(x)+t\mathcal U(\tau_{-y}(x))\right)^{2^*}-\left(\overline{\mathcal{U}}_{a, f}(x)\right)^{2^*}\right.\\
			&\qquad\left.-2^*t\left(\overline{\mathcal{U}}_{a, f}(x)\right)^{2^*-1} \mathcal U(\tau_{-y}(x))-t^{2^*} \mathcal U(\tau_{-y}(x))^{2^*}\right\}\mathrm{~d} V_{\mathbb{B}^{N}}(x)\\
			& \geq \frac{1}{2^*} \int_{d(x,0) \leq 1} a(x) A(r) t^{2}\mathcal U^2(\tau_{-y}(x)) \mathrm{~d} V_{\mathbb{B}^{N}}(x) \\
			&\geq \frac{m^{2} \underline{\mathrm{a}} A(r)}{2^*} \underbrace{\int_{d(x,0) \leq 1}\mathcal U^{2}(\tau_{-y}(x)) \mathrm{~d} V_{\mathbb{B}^{N}}(x)}_{E_1}. \\
		\end{aligned}
	\end{equation*}

	{\bf Estimate of $E_1:$} In the domain $d(x, 0) \leq 1$, we will estimate the value of $E_1$. Triangle inequality allows us to have
	
	$$
	1 - \frac{d(x, 0)}{d(y, 0)} \leq \frac{d(x, y)}{d(y, 0)} \leq  1 +  \frac{d(x, 0)}{d(y, 0)}.
	$$
	Since, $d(x, 0) \leq 1,$ there exist $R > 0$ and $\varepsilon_{R} > 0$ such that whenever $d(y, 0) > R,$ there holds

	$$
	1 - \varepsilon_{R} \leq \frac{d(x, y)}{d(y, 0)} \leq 1 + \varepsilon_{R},
	$$
	where $\varepsilon_R \rightarrow 0$ as $R \rightarrow \infty.$ Thus using the above inequality and \eqref{bubble decay}, we deduce

	\begin{align*}
		E_1 &:= \int_{d(x, 0) \leq 1}\mathcal U^{2}(\tau_{-y}(x)) \mathrm{~d} V_{\mathbb{B}^{N}}(x) \geq C \int_{d(x, 0) \leq 1} 
		e^{-2(c(N, \lambda)) d(x, y)} \;  \mathrm{~d} V_{\mathbb{B}^{N}}(x) \\
		& \geq C \; e^{-2(c(N, \lambda))(1 + \varepsilon_{R}) d(y, 0)} \underbrace{\int_{d(x, 0) \leq 1} \mathrm{~d} V_{\mathbb{B}^{N}}(x)}_{:=C} \\
		& = \overline{C} \; e^{-2(c(N, \lambda))(1 + \varepsilon_{R}) d(y, 0)}.
	\end{align*}
	
	Finally, we obtain
	\begin{equation}
		I I \geq \frac{\tilde{C} m^{2} \underline{\mathrm{a}} A(r)}{2^*} \; e^{-2(1 + \varepsilon_{R}) c(N, \lambda)d(y, 0)}. \label{4.rr}
	\end{equation}
	
	\medskip 
	{\bf Estimate of $I$ :} For $\delta > 2^{*} (c(n, \lambda)) +(N-1)$, let's estimate $I$ as follows:
	\begin{equation}
		\begin{aligned}
			I &=\frac{t^{2^*}}{2^*} \int_{\mathbb{B}^{N}}(1-a(x)) \mathcal U(\tau_{-y}(x))^{2^*} \mathrm{~d} V_{\mathbb{B}^{N}}(x)  \\
			& \leq  C\frac{t^{2^*}}{2^\star} \int_{\mathbb{B}^{N}}e^{-\delta d(x,0)} e^{2^*(c(n, \lambda))(d(x,0)- d(y,0))} \mathrm{~d} V_{\mathbb{B}^{N}}(x) \\
			& \leq  C\frac{t^{2^*}}{2^*} e^{-2^*(c(n, \lambda))d(y,0)} \int_{\mathbb{B}^{N}}e^{-\delta d(x,0)+2^*(c(n, \lambda))d(x,0)} \mathrm{~d} V_{\mathbb{B}^{N}}(x) \\
			& \leq C\frac{t^{2^*}}{2^*} e^{-2^*(c(n, \lambda))d(y,0)} \int_{0}^{\infty}e^{-\delta r+2^*(c(n, \lambda))r + (N-1)r} \mathrm{~d}r \\
			& \leq C\frac{M^{2^*}}{2^*} e^{-2^{\star}(c(n, \lambda))d(y,0)}.\\
		\end{aligned}\label{4p}
	\end{equation}
 \noi
Thus we have obtained the following
	\begin{equation}
		I \leq C\frac{M^{2^*}}{2^*} e^{-2^{\star}(c(n, \lambda))d(y,0)}. \label{4.oo}
	\end{equation}
	Now applying\eqref{4.rr} and \eqref{4.oo}, we can choose $R_{0}>R>0$ large enough such that $2^{\star} > 2(1 + \varepsilon_{R})$
	and hence it follows that

	\begin{equation}\label{anot1}
		(I)<(I I) \text { for }d(y,0) \geq R_{0}.
	\end{equation}
	Consequently, \eqref{4.ab} is proved.
	\end{proof}

\medskip

 
\section{Existence and multiplicity results for the case $a(x)\leq 1$} \label{Thm2sec}
  This section demonstrates that there exist at least three positive solutions of \eqref{Paf} when $a(x) \leq 1.$ We thoroughly examine the energy levels related to Aubin-Talenti bubbles and hyperbolic bubbles using the Adachi-Tanaka method combined with energy estimates. Let $\bar{\mathcal I}_{\lambda,a,f}$ denotes the energy functional as defined in \eqref{4.1}. Recall that the best Sobolev constant
	$$ S_{\lambda}=\inf_{u \in \mathcal{H}, u \ne 0} \frac{ \int_{ \mathbb B^N}\Big[|\nabla_{\mathbb B^N}u|^2-\lambda u^2\Big] \, {\rm d}V_{\mathbb{B}^N}}{\Big(\int_{ \mathbb B^N}|u|^{2^*} \, {\rm d}V_{\mathbb{B}^N}\Big)^\frac{2}{2^*}}.$$	
	
\subsection{Existence of local minimizer}
Define $$\Sigma=\{v\in \mathcal H\,: \norm{v}_\lambda=1\}\,\,\text{ and } \Sigma_+=\{v\in \Sigma\, :\; v_{+} \not\equiv 0\}$$
and $E_{\lambda,a,f}:  \Sigma_+\to\mathbb R$ as 
\begin{equation}
E_{\lambda,a,f}(v):=\sup_{t>0}\bar{\mathcal I}_{\lambda,a,f}(tv).
\end{equation}

\medskip
\noi
Let $v\in \Sigma_+$, we see the following relation between $E_{\lambda,1,0}(v)$ and $\bar{\mathcal I}_{\lambda,a,0}(v)$.		
	\begin{align} \label{expval}
		E_{\lambda,a,0}(v)&=\sup_{t>0}\bar{\mathcal I}_{\lambda,a,0}(tv)=\sup_{t>0}\(\frac{1}{2}t^2\norm{v}^2_\la- \frac{1}{2^*}t^{2^*}\int_{\mathbb B^N}a(x)v_+^{2^*}\mathrm{~d} V_{\bn}(x)\)\\ \notag
		&=\bar{\mathcal I}_{\lambda,a,0}\(\(\int_{\mathbb B^N}a(x)v_+^{2^*}\mathrm{~d} V_{\bn}(x)\)^\frac{1}{2-2^*}v\)\\ \notag
		&=\(\frac{1}{2}-\frac{1}{2^*}\)\(\int_{\mathbb B^N}a(x)v_+^{2^*}\mathrm{~d} V_{\bn}(x)\)^{-\frac{2}{2^*-2}}.
	\end{align}
	Since $\frac{2}{2^*-2}>0$, for all $v\in \Sigma_+$, we obtain the following relation
	\begin{equation} \bar{a}^{-\frac{2}{2^*-2}}E_{\lambda,1,0}(v)=E_{\lambda,\bar{a},0}(v)\leq E_{\lambda,a,0}(v)\leq E_{\lambda,\underline{a},0}(v)=\underline{a}^{-\frac{2}{2^*-2}}E_{\lambda,1,0}(v).\label{EI_Rel}\end{equation}
	On the other hand, 
\begin{equation*}
	\sup_{t>0} \mathcal{I}_{\lambda,1,0}(t\mathcal U)=\sup_{t>0} \bar{\mathcal I}_{\lambda,1,0}(t\mathcal U)=  \bar{\mathcal I}_{\lambda,1,0}(\mathcal U).
\end{equation*}	
Therefore
\begin{align}\label{Re_IEaa}
\bar{a}^{-\frac{2}{2^*-2}}\bar{\mathcal I}_{\lambda,1,0}(\mathcal U)  \leq \inf_{v\in \Sigma_+}E_{\lambda,a,0}(v) \leq \underline{a}^{-\frac{2}{2^*-2}}\bar{\mathcal I}_{\lambda,1,0}(\mathcal U).
\end{align}	
	
Now we state some auxiliary lemmas whose proofs are a straightforward adaptation of proofs given in \cite{Adachi}; also, see \cite{GGS} for the hyperbolic space.

\begin{lemma}\label{Lem_reIE}
	(i)	For $v\in \mathcal H$ and $\eps\in(0,1)$, the following inequality holds
		\begin{equation}
(1-\eps)\bar{\mathcal I}_{\lambda,\frac{a}{1-\eps},0}(v)-\frac{1}{2\epsilon}\norm{f}_{\mathcal{H}'}^2\leq \bar{\mathcal I}_{\lambda,a,f}(v)\leq (1+\eps)\bar{\mathcal I}_{\lambda,\frac{a}{1+\eps},0}(v)+\frac{1}{2\epsilon}\norm{f}_{\mathcal{H}'}^2.
		\end{equation}
		(ii) For $v\in \Sigma_+$ and $\eps\in(0,1)$, we obtain
		\begin{equation}
(1-\eps)^{\frac{N}{2}}E_{\lambda,a,0}(v)-\frac{1}{2\epsilon}\norm{f}_{\mathcal{H}'}^2\leq E_{\lambda,a,f}(v)\leq (1+\eps)^{\frac{N}{2}}E_{\lambda,a,0}(v)+\frac{1}{2\epsilon}\norm{f}_{\mathcal{H}'}^2.
		\end{equation} 
		(iii) In particular, there exists a constant $d_0>0$ such that if $\norm{f}_{\mathcal{H}'}<d_0$, then 
		\begin{equation}
			\inf_{v\in \Sigma_+}E_{\lambda,a,f}(v)>0.
		\end{equation}
			\end{lemma}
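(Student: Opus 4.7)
The plan is to prove (i) by a Young-type splitting of the linear term $\langle f,v\rangle$, pass to the supremum over dilations $tv$ to deduce (ii) via the explicit formula \eqref{expval}, and combine (ii) with the uniform lower bound \eqref{Re_IEaa} to obtain (iii). I do not expect any genuine obstacle; the only care needed is a clean bookkeeping of the exponent $\tfrac{2}{2^{*}-2}=\tfrac{N-2}{2}$.

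For part (i), I would write $\bar{\mathcal I}_{\lambda,a,f}(v)=\bar{\mathcal I}_{\lambda,a,0}(v)-\langle f,v\rangle$ and apply the elementary Young inequality $|\langle f,v\rangle|\le \tfrac{\eps}{2}\norm{v}_{\lambda}^{2}+\tfrac{1}{2\eps}\norm{f}_{\mathcal{H}'}^{2}$ to get a two-sided estimate in which the quadratic term acquires a factor $\tfrac{1\mp\eps}{2}$. The identity
\begin{equation*}
\tfrac{1\mp\eps}{2}\norm{v}_\lambda^2-\tfrac{1}{2^{*}}\int_{\bn}a(x) v_+^{2^{*}}\dvg=(1\mp\eps)\,\bar{\mathcal I}_{\lambda,a/(1\mp\eps),0}(v),
\end{equation*}
obtained by absorbing the scalar $(1\mp\eps)^{-1}$ into the potential $a$ inside the $L^{2^{*}}$ integral, then delivers (i) at once.

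For part (ii), apply (i) to $tv$ for each $t>0$ and observe that the additive constant $\tfrac{1}{2\eps}\norm{f}_{\mathcal{H}'}^{2}$ is independent of $t$, so taking $\sup_{t>0}$ of both inequalities produces
\begin{equation*}
(1-\eps)E_{\lambda,a/(1-\eps),0}(v)-\tfrac{1}{2\eps}\norm{f}_{\mathcal{H}'}^{2}\le E_{\lambda,a,f}(v)\le(1+\eps)E_{\lambda,a/(1+\eps),0}(v)+\tfrac{1}{2\eps}\norm{f}_{\mathcal{H}'}^{2}.
\end{equation*}
The closed form \eqref{expval} gives the homogeneity $E_{\lambda,a/c,0}(v)=c^{2/(2^{*}-2)}E_{\lambda,a,0}(v)=c^{(N-2)/2}E_{\lambda,a,0}(v)$, and then $(1\mp\eps)\cdot(1\mp\eps)^{(N-2)/2}=(1\mp\eps)^{N/2}$ produces the asserted shape of (ii).

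For part (iii), set $m_0:=\bar a^{-(N-2)/2}\,\bar{\mathcal I}_{\lambda,1,0}(\mathcal U)>0$; by \eqref{Re_IEaa} this is a uniform lower bound for $\inf_{v\in\Sigma_+}E_{\lambda,a,0}(v)$. Specialising (ii) with $\eps=\tfrac12$ yields
\begin{equation*}
\inf_{v\in \Sigma_+} E_{\lambda,a,f}(v)\ge 2^{-N/2}m_0-\norm{f}_{\mathcal{H}'}^{2},
\end{equation*}
so the choice $d_0:=2^{-N/4}\sqrt{m_0}$ renders the right-hand side strictly positive whenever $\norm{f}_{\mathcal{H}'}<d_0$, which is (iii).
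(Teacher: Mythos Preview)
Your argument is correct and is precisely the standard one: the paper does not give its own proof of this lemma but cites \cite{Adachi} and \cite{GGS}, where exactly this Young-inequality splitting, followed by the scaling identity $E_{\lambda,a/c,0}(v)=c^{(N-2)/2}E_{\lambda,a,0}(v)$ from \eqref{expval}, is used. Your derivation of (iii) from \eqref{Re_IEaa} with a fixed $\eps$ is also the intended route.
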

			
\medskip			
\noi		
For $v\in \Sigma_+$, define a new function ${g_v}: [0,\I)\to \mathbb R$ as
$$ {g_v}(t):=\bar{\mathcal I}_{\lambda,a,f}(tv).$$	
\begin{lemma}\label{Lem_gv}
	\begin{itemize}
		\item[(i)] For every $v\in \Sigma_+$, the function ${g_v}$ has at most  two critical points.
		\item[(ii)] Suppose $\norm{f}_{\mathcal{H}'}<d_0$. For any  $v\in \Sigma_+$, there exists a unique $t_{a,f}(v)>0$ such that $\bar{\mathcal I}_{\lambda,a,f}(t_{a,f}(v)v)=E_{\lambda,a,f}(v)$. Furthermore, $t_{a,f}(v)$ satisfies
		\begin{equation}\label{dI^2_neg}
 t_{a,f}(v)>  \Big((2^*-1)\int_{\mathbb B^N}a(x)v_+^{2^*}\mathrm{~d} V_{\bn}       \Big)^\frac{-1}{2^*-2}\text{ and }\,\, \bar{\mathcal I}''_{\lambda,a,f}\Big(t_{a,f}(v)v\Big)[v,v]<0.
		\end{equation}
		\item[(iii)] Any critical point of $g_v$ different from $t_{a,f}(v)$ lies in the interval $\Big[0, \, \frac{N+2}{4}\norm{f}_{\mathcal{H}'}\Big].$
	\end{itemize}
\end{lemma}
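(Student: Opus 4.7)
The plan is to reduce everything to a one-variable calculus analysis of $g_v$. Since $\|v\|_\lambda=1$, a direct expansion gives
$$g_v(t)=\frac{t^2}{2}-\frac{A(v)}{2^\ast}\,t^{2^\ast}-t\,\langle f,v\rangle,\qquad A(v):=\int_{\mathbb B^N}a(x)\,v_+^{2^\ast}\,dV_{\mathbb B^N}>0,$$
so that
$$g_v'(t)=t-A(v)\,t^{2^\ast-1}-\langle f,v\rangle,\qquad g_v''(t)=1-(2^\ast-1)A(v)\,t^{2^\ast-2}.$$
Because $g_v''$ is strictly decreasing on $[0,\infty)$ from $1$ to $-\infty$, it has a unique zero at $t_\ast:=\big((2^\ast-1)A(v)\big)^{-1/(2^\ast-2)}$. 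Consequently $g_v'$ is strictly increasing on $[0,t_\ast]$ and strictly decreasing on $[t_\ast,\infty)$, and so has at most two zeros in $(0,\infty)$. This immediately proves (i).

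For (ii), Lemma \ref{Lem_reIE}(iii) gives $E_{\lambda,a,f}(v)=\sup_{t>0}g_v(t)>0$ when $\|f\|_{\mathcal H'}<d_0$. Since $g_v(0)=0$ and $g_v(t)\to -\infty$, the supremum is attained at some $t_{a,f}(v)>0$ with $g_v'(t_{a,f}(v))=0$ and $g_v''(t_{a,f}(v))\leq 0$. The shape analysis of (i) shows that whenever $g_v'$ has two zeros, the smaller one lies on the increasing branch $(0,t_\ast)$ (local minimum, $g_v''\geq 0$) and the larger one lies on the decreasing branch $(t_\ast,\infty)$ (local maximum, $g_v''<0$). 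The positivity $g_v(t_{a,f}(v))>0$ forces $t_{a,f}(v)$ to be the larger zero, giving $t_{a,f}(v)>t_\ast$, i.e.\ the stated inequality, together with strict negativity $g_v''(t_{a,f}(v))<0$. A short computation shows
$$\bar{\mathcal I}''_{\lambda,a,f}(tv)[v,v]=\|v\|_\lambda^2-(2^\ast-1)\,t^{2^\ast-2}\!\int_{\mathbb B^N}a\,v_+^{2^\ast-2}v^2\,dV_{\mathbb B^N}=g_v''(t),$$
where I will use the elementary identity $v_+^{2^\ast-2}v^2=v_+^{2^\ast}$ (valid because $v_+$ and $v_-$ have disjoint supports and $2^\ast-2>0$). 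Uniqueness of $t_{a,f}(v)$ follows since no other critical point can be a global maximizer.

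For (iii), the remaining critical point (if any) is the local minimum $t_0\in(0,t_\ast)$, which only occurs when $\langle f,v\rangle>0$ (otherwise $g_v'(0)\geq 0$ and the single positive critical point is exactly $t_{a,f}(v)$). The key step is to evaluate $g_v$ at any critical point by using the Euler-type relation $A(v)\,t^{2^\ast-1}=t-\langle f,v\rangle$ to eliminate the $t^{2^\ast}$ term, yielding
$$g_v(t)=\frac{t^2}{N}-\frac{N+2}{2N}\,t\,\langle f,v\rangle=\frac{t}{N}\left(t-\frac{N+2}{2}\langle f,v\rangle\right).$$
Since $g_v'(0)=-\langle f,v\rangle<0$, $g_v$ is strictly decreasing on $[0,t_0]$, hence $g_v(t_0)<g_v(0)=0$. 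With $t_0>0$ this forces $t_0<\frac{N+2}{2}\langle f,v\rangle$, and together with $|\langle f,v\rangle|\leq\|f\|_{\mathcal H'}\|v\|_{\mathcal H}$ and the norm equivalence between $\|\cdot\|_{\mathcal H}$ and $\|\cdot\|_\lambda$, this gives the stated bound.

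The one genuine issue is identifying which of the (up to two) critical points is the global maximizer; my curvature-based argument via $g_v''$ handles this cleanly, and the algebraic simplification $g_v(t)=\frac{t}{N}(t-\tfrac{N+2}{2}\langle f,v\rangle)$ at critical points is the crucial trick making (iii) transparent. Everything else is routine one-variable calculus combined with the pre-established estimates of Lemma \ref{Lem_reIE}.
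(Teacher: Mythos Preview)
Your approach is the standard one-variable calculus analysis, which is exactly what the paper has in mind (it defers the proof to \cite{Adachi} and \cite{GGS}). Parts (i) and (ii) are correct as written.

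There is, however, a genuine gap in your proof of (iii): your argument yields the bound $t_0<\tfrac{N+2}{2}\langle f,v\rangle$, not the stated $\tfrac{N+2}{4}\|f\|_{\mathcal H'}$. Indeed, from $g_v(t_0)=\tfrac{t_0}{N}\big(t_0-\tfrac{N+2}{2}\langle f,v\rangle\big)<0$ you only get $t_0<\tfrac{N+2}{2}\langle f,v\rangle$, which is off by a factor of two. The sharper constant comes from using the second-derivative information you already established in (ii): at the local minimum $t_0$ one has $g_v''(t_0)\ge 0$, i.e.\ $(2^\ast-1)A(v)\,t_0^{2^\ast-2}\le 1$, hence $A(v)\,t_0^{2^\ast-1}\le \tfrac{t_0}{2^\ast-1}$. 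Combined with the critical-point equation $t_0-A(v)\,t_0^{2^\ast-1}=\langle f,v\rangle$ this gives
\[
t_0\Big(1-\tfrac{1}{2^\ast-1}\Big)=t_0\cdot\tfrac{2^\ast-2}{2^\ast-1}\le \langle f,v\rangle,
\qquad\text{so}\qquad
t_0\le \tfrac{2^\ast-1}{2^\ast-2}\,\langle f,v\rangle=\tfrac{N+2}{4}\,\langle f,v\rangle.
\]
Finally, since throughout the paper the dual pairing is used as $\langle f,u\rangle\le \|f\|_{\mathcal H'}\|u\|_\lambda$ (the dual norm is taken with respect to $\|\cdot\|_\lambda$), and $\|v\|_\lambda=1$, you get $t_0\le \tfrac{N+2}{4}\|f\|_{\mathcal H'}$ directly; the detour through norm equivalence is unnecessary and would in fact spoil the constant.
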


The following proposition provides the first critical point for $\bar{\mathcal I}_{\lambda, a,f}$. 
	
	\begin{proposition}\label{Pro_Locmin}
            There exists $r_1>0$ and $d_1\in(0,d_0]$ such that 
            \begin{itemize}
            	\item[(i)] $\bar{\mathcal I}_{\lambda,a,f}$ is strictly convex in $B_{r_1}(0):=\{u\in\mathcal H:\, \norm{u}_\lambda<r_1\}$.
            	\item[(ii)] Suppose $\norm{f}_{\mathcal{H}'}\leq d_1$. Then 
            		$$  \inf_{  \norm{u}_{\lambda}=r_1}\bar{\mathcal I}_{\lambda,a,f}(u)>0 .$$
            \end{itemize}	
            Moreover, $\bar{\mathcal I}_{\lambda,a,f}$ has a unique critical point $\mathcal V_{a,f}$ in $B_{r_1}(0)$ satisfying
            $$ \bar{\mathcal I}_{\lambda,a,f}(\mathcal V_{a,f})=\inf_{u\in B_{r_1}(0) }\bar{\mathcal I}_{\lambda,a,f}(u).$$ 	
	\end{proposition}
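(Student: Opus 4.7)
The plan is to first establish strict convexity quantitatively via an estimate on the second derivative, and then obtain the interior minimizer by a weak-compactness-plus-convexity argument. A direct differentiation yields
\begin{equation*}
\bar{\mathcal I}_{\lambda,a,f}''(u)[v,v]=\|v\|_\lambda^2-(2^*-1)\int_{\bn}a(x)\,u_+^{2^*-2}v^2\,\dvg.
\end{equation*}
Applying H\"older with exponents $N/2$ and $2^*/2$, followed by the Poincar\'e--Sobolev inequality \eqref{S-inq}, gives
\begin{equation*}
\int_{\bn}a(x)\,u_+^{2^*-2}v^2\,\dvg\le \|a\|_\infty\,S_\lambda^{-2^*/2}\,\|u\|_\lambda^{2^*-2}\|v\|_\lambda^2.
\end{equation*}
Choosing $r_1>0$ small enough that $(2^*-1)\|a\|_\infty S_\lambda^{-2^*/2}r_1^{2^*-2}<1$ secures (i); by continuity of $u\mapsto\bar{\mathcal I}_{\lambda,a,f}''(u)$, the same choice in fact gives strict convexity on the closed ball $\overline{B_{r_1}(0)}$.

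For (ii), I use Poincar\'e--Sobolev and the duality estimate for $f$ to obtain
\begin{equation*}
\bar{\mathcal I}_{\lambda,a,f}(u)\ge \tfrac{1}{2}\|u\|_\lambda^2-\tfrac{\|a\|_\infty}{2^*}S_\lambda^{-2^*/2}\|u\|_\lambda^{2^*}-\|f\|_{\mathcal{H}'}\|u\|_\lambda
\end{equation*}
for every $u\in\mathcal H$. Possibly shrinking $r_1$ further so that the quadratic-plus-critical part is at least $\alpha r_1^2$ on the sphere $\|u\|_\lambda=r_1$ for some $\alpha>0$, the claim follows by choosing $d_1\in(0,d_0]$ with $d_1<\alpha r_1$.

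For the interior minimizer, note that $\bar{\mathcal I}_{\lambda,a,f}(0)=0$ and, since $f\ge 0$ is not identically zero, there exists $\phi\in\mathcal H$ with $\phi\ge 0$ and $\langle f,\phi\rangle>0$; expanding $t\mapsto\bar{\mathcal I}_{\lambda,a,f}(t\phi)$ shows it is strictly negative for sufficiently small $t>0$, hence
\begin{equation*}
c^*:=\inf_{u\in B_{r_1}(0)}\bar{\mathcal I}_{\lambda,a,f}(u)<0<\inf_{\|u\|_\lambda=r_1}\bar{\mathcal I}_{\lambda,a,f}(u).
\end{equation*}
Take a minimizing sequence $(u_n)$ in $B_{r_1}(0)$; by boundedness, after extraction $u_n\rightharpoonup\mathcal V_{a,f}$ in $\mathcal H$ with $\mathcal V_{a,f}\in\overline{B_{r_1}(0)}$. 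By (i) the functional $\bar{\mathcal I}_{\lambda,a,f}$ is continuous and convex on the closed convex set $\overline{B_{r_1}(0)}$, hence weakly lower semicontinuous there, so $\bar{\mathcal I}_{\lambda,a,f}(\mathcal V_{a,f})\le c^*$; the separation of $c^*$ from the boundary infimum forces $\|\mathcal V_{a,f}\|_\lambda<r_1$, so $\mathcal V_{a,f}$ is an interior local minimizer and therefore a critical point of $\bar{\mathcal I}_{\lambda,a,f}$. Uniqueness inside $B_{r_1}(0)$ follows at once from strict convexity.

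The main (and only) delicate point is that the critical nonlinearity $u\mapsto\int_{\bn}a(x)u_+^{2^*}\,\dvg$ is not weakly continuous on $\mathcal H$, so the direct method does not apply globally; however, the local strict convexity proved in (i) produces weak lower semicontinuity on $\overline{B_{r_1}(0)}$, which is exactly what replaces the missing weak continuity in the direct method and enables the minimizer to be extracted.
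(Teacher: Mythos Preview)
Your proof is correct and follows essentially the same approach as the paper: the same second-derivative estimate via H\"older and Poincar\'e--Sobolev for (i), and the same lower bound on the sphere for (ii). Your treatment of the minimizer is in fact more detailed than the paper's, which simply asserts existence and uniqueness from strict convexity; your explicit use of weak lower semicontinuity (following from local convexity plus norm-continuity) together with the separation $c^*<0<\inf_{\|u\|_\lambda=r_1}\bar{\mathcal I}_{\lambda,a,f}$ to force the minimizer into the interior is exactly the argument that makes the paper's claim rigorous.
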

\begin{proof}
	$(i)$ A direct computation gives \begin{align*}
\bar{\mathcal I}''_{\lambda,a,f}(u)[v,v]&=\norm{v}^2_\la-(2^*-1) \int_{\mathbb B^N}a(x)u_+^{2^*-2}v^2\mathrm{~d} V_{\bn}(x). 	\end{align*}
Now using H\"older and Sobolev inequality, and the fact that $a(x)\leq 1$, we obtain
 \begin{align*}
 	\int_{\mathbb B^N}a(x)u_+^{2^*-2}v^2\mathrm{~d} V_{\bn}(x) &\leq \norm{u}_{2^*}^{2^*-2}\norm{v}_{2^*}^2\leq S_\lambda^{-\frac{2^*}{2}}\norm{u}_\lambda^{2^*-2}\norm{v}_\lambda^{2}.
 \end{align*}
 Thus
 \begin{align*}
 	\bar{\mathcal I}''_{\lambda,a,f}(u)[v,v]&\geq \norm{v}^2_\la-(2^*-1) S_\lambda^{-\frac{2^*}{2}}\norm{u}_\lambda^{2^*-2}\norm{v}_\lambda^{2}=
 	\Big(1-(2^*-1) S_\lambda^{-\frac{2^*}{2}}\norm{u}_\lambda^{2^*-2} \Big) \norm{v}^2_\la .	\end{align*} 
 	Set $r_1=\Big((2^*-1) S_\lambda^{-\frac{2^*}{2}}\Big)^{-\frac{1}{2^*-2}}$. Thus $\bar{\mathcal I}_{\lambda,a,f}$ is strictly convex in $B_{r_1}(0)$. \\
 	\noi
 	$(ii)$ Suppose $\norm{u}_{\lambda}=r_1$ then, we have 
 	 \begin{align*}
 		\bar{\mathcal I}_{\lambda,a,f}(u)&=\frac{1}{2}\norm{u}^2_\la- \frac{1}{2^*}\int_{\mathbb B^N}a(x)u_+^{2^*}\mathrm{~d} V_{\bn}(x)- \int_{\mathbb B^N}f(x)u\, \mathrm{~d} V_{\bn}(x)
 		 \\
 		&\geq \frac{1}{2}r_1^2-\frac{r_1^{2^*}}{2^*}S_\lambda^{-\frac{2^*}{2}}-r_1\norm{f}_{\mathcal{H}'}\\
 		&=\Big(\frac{1}{2} -\frac{r_1^{2^*-2}}{2^*}S_\lambda^{-\frac{2^*}{2}} \Big)r_1^2-r_1\norm{f}_{\mathcal{H}'}=\Big(\frac{1}{2} -\frac{ 1}{2^*(2^*-1)}  \Big)r_1^2-r_1\norm{f}_{\mathcal{H}'}.
 			\end{align*}
 			Thus there exists $d_1\in(0,d_0]$ such that  
 			\begin{align*}
 			\inf_{  \norm{u}_{\lambda}=r_1}	\bar{\mathcal I}_{\lambda,a,f}(u)>0=	\bar{\mathcal I}_{\lambda,a,f}(0) \quad \text{ for } \norm{f}_{\mathcal{H}'}\leq d_1.\end{align*}
 			
 			Now using the above information and the fact that	$\bar{\mathcal I}_{\lambda,a,f}$ is strictly convex, we obtain   a unique critical point $\mathcal V_{a,f}$ of $\bar{\mathcal I}_{\lambda,a,f}$  in $B_{r_1}(0)$. Furthermore, it satisfies
 			  \be\label{1st sol energy} \bar{\mathcal I}_{\lambda,a,f}(\mathcal V_{a,f})=\inf_{u\in B_{r_1}(0) }\bar{\mathcal I}_{\lambda,a,f}(u)\leq\bar{\mathcal I}_{\lambda,a,f}(0)=0.\ee 
 			  This completes the proof.
\end{proof}

\medskip 

Now, the proposition that follows characterizes all the possible critical points of the functional $\bar{\mathcal I}_{\lambda, a,f}$ in terms of the functional $E_{\lambda, a, f}.$
	
\begin{proposition}\label{Relcrit_I&E}
Assume $\norm{f}_{\mathcal{H}'}\leq d_2$, where $d_2=\min\{d_1,\frac{4r_1}{N+2}\}>0$ and $d_1, r_1$ as in Proposition \ref{Pro_Locmin}. Then 
	\begin{itemize}
		\item[(i)] $E_{\lambda,a,f}\in C^1\left( \Sigma_+,\mathbb R\right)$ and 
		\begin{equation} 
E'_{\lambda,a,f}(v)[h]=t_{a,f}(v)\bar{\mathcal I}'_{\lambda,a,f}(t_{a,f}(v)v)[h] \label{der}
		\end{equation}
		for all $h\in T_v\Sigma_+=\{h\in \mathcal H: \langle v,h\rangle_\lambda=0\}$.
			\item[(ii)] $v\in \Sigma_+$ is a critical point of $E_{\lambda,a,f}$ if and only if $t_{a,f}(v)v\in \mathcal H$ is a critical point of $\bar{\mathcal I}_{\lambda,a,f}$.
			\item[(iii)] Additionally, the set of critical points of $\bar{\mathcal I}_{\lambda,a,f}$ can be characterized as  
			$$\left\{  t_{a,f}(v)v\,:\, v\in \Sigma_+,\,\, E'_{\lambda,a,f}(v)=0   \right\}\cup \left\{ \mathcal{V}_{a,f}\right\} .$$ 
	\end{itemize}
\end{proposition}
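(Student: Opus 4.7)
The plan is to leverage Lemma \ref{Lem_gv}, which already encodes all the pointwise structural information about $g_v(t)=\bar{\mathcal I}_{\lambda,a,f}(tv)$: uniqueness and strict non-degeneracy of the maximizer $t_{a,f}(v)$ from \eqref{dI^2_neg}, together with a uniform confinement of any secondary critical point to the interval $[0,\tfrac{N+2}{4}\|f\|_{\mathcal H'}]$. For part (i), I would establish the $C^1$ regularity of $v\mapsto t_{a,f}(v)$ on $\Sigma_+$ by applying the implicit function theorem to
$$F(t,v):=\tfrac{d}{dt}\bar{\mathcal I}_{\lambda,a,f}(tv)=t\,\|v\|_\la^{2}-t^{2^*-1}\!\int_{\bn}a(x)v_+^{2^*}\dvg-\int_{\bn}fv\,\dvg.$$
By Lemma \ref{Lem_gv}(ii), $F(t_{a,f}(v),v)=0$ and $\partial_t F(t_{a,f}(v),v)=\bar{\mathcal I}''_{\lambda,a,f}(t_{a,f}(v)v)[v,v]<0$, which is exactly the invertibility hypothesis for IFT; a local-to-global argument (using the uniform positive lower bound on $t_{a,f}$ from \eqref{dI^2_neg} to separate it from any competing critical point in $[0,\tfrac{N+2}{4}\|f\|_{\mathcal H'}]$) identifies the IFT branch with $t_{a,f}$. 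Consequently $E_{\lambda,a,f}(v)=\bar{\mathcal I}_{\lambda,a,f}(t_{a,f}(v)v)\in C^1(\Sigma_+,\mathbb R)$, and for $h\in T_v\Sigma_+$ the chain rule gives
$$E'_{\lambda,a,f}(v)[h]=\bar{\mathcal I}'_{\lambda,a,f}(t_{a,f}(v)v)\bigl[(Dt_{a,f}(v)[h])v\bigr]+t_{a,f}(v)\,\bar{\mathcal I}'_{\lambda,a,f}(t_{a,f}(v)v)[h].$$
The first summand vanishes because $\bar{\mathcal I}'_{\lambda,a,f}(t_{a,f}(v)v)[v]=g_v'(t_{a,f}(v))=0$, proving \eqref{der}.

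For part (ii), the implication ($\Leftarrow$) is immediate from \eqref{der}. For the converse, suppose $E'_{\lambda,a,f}(v)=0$ on $T_v\Sigma_+$. Using the orthogonal decomposition $\mathcal H=\mathbb R v\oplus T_v\Sigma_+$ (with respect to $\langle\cdot,\cdot\rangle_\la$), any $w\in\mathcal H$ splits as $w=\langle w,v\rangle_\la v+h$ with $h\in T_v\Sigma_+$; then combining the criticality $\bar{\mathcal I}'_{\lambda,a,f}(t_{a,f}(v)v)[v]=0$ (from the maximality of $t_{a,f}(v)$ in $t$) with \eqref{der} yields $\bar{\mathcal I}'_{\lambda,a,f}(t_{a,f}(v)v)[w]=0$ for every $w$.

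For part (iii), let $u$ be any critical point of $\bar{\mathcal I}_{\lambda,a,f}$. Testing the Euler--Lagrange equation against $u_-$ and using $f\ge 0$ yields $-\|u_-\|_\la^2\ge 0$, so $u\ge 0$, and $u\not\equiv 0$ since otherwise $f\equiv 0$ against our hypothesis. Hence $v:=u/\|u\|_\la\in\Sigma_+$ and $t_0:=\|u\|_\la>0$ is a critical point of $g_v$. By Lemma \ref{Lem_gv}(i)--(iii), exactly two possibilities arise: either $t_0=t_{a,f}(v)$, so $u=t_{a,f}(v)v$ with $v$ a critical point of $E_{\lambda,a,f}$ by (ii); or $t_0\le\tfrac{N+2}{4}\|f\|_{\mathcal H'}\le\tfrac{N+2}{4}d_2\le r_1$, which places $u$ in $B_{r_1}(0)$, and the uniqueness statement of Proposition \ref{Pro_Locmin} forces $u=\mathcal V_{a,f}$. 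The only genuinely technical step in the whole argument is the local-to-global identification in part (i); once the $C^1$ structure of $t_{a,f}$ is secured, parts (ii) and (iii) are essentially bookkeeping built on the orthogonal decomposition of $\mathcal H$ and the dichotomy of critical points of $g_v$.
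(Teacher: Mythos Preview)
Your proof is correct and follows exactly the standard route from Adachi--Tanaka \cite{Adachi}, which is precisely what the paper invokes in lieu of an explicit proof (the proposition is stated without proof, deferring to \cite{Adachi} and \cite{GGS}). The implicit function theorem argument for (i), the orthogonal splitting $\mathcal H=\mathbb R v\oplus T_v\Sigma_+$ for (ii), and the dichotomy via Lemma~\ref{Lem_gv}(iii) combined with the choice $d_2\le\tfrac{4r_1}{N+2}$ for (iii) are the intended ingredients; in particular your observation that the lower bound in \eqref{dI^2_neg} together with the Poincar\'e--Sobolev inequality yields $t_{a,f}(v)>r_1$, while any secondary critical point of $g_v$ lies in $[0,r_1]$, is exactly the separation that makes the local IFT branch globally coincide with $t_{a,f}$.
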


\medskip
	
\subsection{PS-decomposition of $E_{\lambda,a,f}$}
	
Now we look at the Palais-Smale decomposition of $E_{\la,a,f}.$


\begin{proposition}\label{ProPS_ELaf}
	Let us assume that $\|f\|_{\mathcal H' } \leq d_{2}$ for $d_{2}>0$ as given in Proposition \ref{Relcrit_I&E}. Then,
	\begin{enumerate}
	\item As the $\operatorname{dist}_\mathcal{H}\left(v_, \partial \Sigma_{+}\right)=\inf \left\{\left\|v-u\right\|_{\la}: u \in \Sigma, u_{+} \equiv 0\right\} {\xrightarrow{n}}\;\;0$,\\ $E_{\la,a, f}\left(v\right) \rightarrow \infty$. 
	\medskip
\item Let $(v_{n})_n \subset \Sigma_{+}$ is such that the following holds as $n \rightarrow \infty$
	\end{enumerate}
	\begin{equation}
		\begin{aligned}
			&E_{\la,a, f}\left(v_{n}\right) \rightarrow c \text { for some } c>0, \\
			&\left\|E_{\la,a, f}^{\prime}\left(v_{n}\right)\right\|_{T_{v_{n}}^{*} \Sigma_{+}} \equiv \sup \left\{E_{\la,a, f}^{\prime}\left(v_{n}\right) h;\; h \in T_{v_{n}} \Sigma_{+},\;\|h\|_{\lambda}=1\right\} \rightarrow 0.
		\end{aligned}
		\label{4.kk}
	\end{equation}
Then there exists a subsequence, still denoted by $(v_{n})_n$, a critical point $u_{0}(x) \in \mathcal H$ of $\mathcal{I}_{\la, a, f}(u)$, integers $n_1,n_2\in \N\cup\{0\}$ and functions $u^j_n, v^k_n\in \mathcal{H}$, $y^k\in\bn$ for all $1\leq j\leq n_1$ and $1\leq k\leq n_2$ such that  up to a subsequence (still denoted by the same index),
 $$\left\| v_{n}(x)-\frac{u_{0}(x)+\sum_{j=1}^{n_1}u_n^j+\sum_{k=1}^{n_2}v_n^k}{\left\|u_{0}(x)+\sum_{j=1}^{n_1}u_n^j+\sum_{k=1}^{n_2}v_n^k\right\|_\la}\right\|_\la \rightarrow 0 \text { as } n \rightarrow \infty,$$
 where $u_n^j$ and $v_n^k$ are Palais-Smale sequence of the form \eqref{seq-un} and \eqref{seq-vn}, respectively and $o(1)\to 0$ in $\mathcal{H}$. Moreover, we have the following
	$$E_{\la,a,f}(v_n)=\mathcal I_{\lambda,a,f}(u_0)+ \sum_{j=1}^{n_1}\mathcal I_{\lambda,1,0}(\mathcal{U}^j)+\sum_{k=1}^{n_2} a(y^k)^{-\frac{N-2}{2}}J(V^k)\text { as } n \rightarrow \infty,$$
where $\mathcal{U}^j$ and $V^k$ are the solutions of \eqref{E} (without the sign condition) and \eqref{Talenti}, respectively corresponding to $u^j_n$ and $v_n^k$. Furthermore, we have 
\begin{enumerate}
	\item $T^j_n \circ T^{-i}_n(0)\to \I $ for $i\ne j$,
	\item $\Big| \log\big(\frac{\eps^k_{n} }{\eps^l_{n}}\big)\Big|+ \Big| \frac{y_n^k-y^l_n}{\eps^l_{n}}\Big|\to \I$ as $n\to \I$ for $k\ne l$.
\end{enumerate}
\end{proposition}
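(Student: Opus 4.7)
The plan is to lift the constrained PS analysis of $E_{\lambda,a,f}$ on $\Sigma_+$ to the unconstrained PS analysis of $\bar{\mathcal I}_{\lambda,a,f}$ on $\mathcal H$ via the projection map $v\mapsto t_{a,f}(v)\,v$, so that Theorem~\ref{ps_decom} can be invoked. For item (1), the identity \eqref{expval} gives $E_{\lambda,a,0}(v)=(\tfrac12-\tfrac{1}{2^*})(\int_{\bn}a\,v_+^{2^*}\,\dvg)^{-2/(2^*-2)}$. If $\operatorname{dist}_{\mathcal H}(v_n,\partial\Sigma_+)\to 0$, pick $w_n\in\Sigma$ with $(w_n)_+\equiv 0$ and $\|v_n-w_n\|_\lambda\to 0$; since $w_n\leq 0$, the pointwise inequality $(v_n)_+\leq(v_n-w_n)_+$ combined with Poincar\'e--Sobolev gives $\int_{\bn} a(v_n)_+^{2^*}\,\dvg\to 0$, whence $E_{\lambda,a,0}(v_n)\to\infty$. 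The lower bound in Lemma~\ref{Lem_reIE}(ii), with any fixed $\epsilon\in(0,1)$, then propagates this to $E_{\lambda,a,f}(v_n)\to\infty$.

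For item (2), set $t_n:=t_{a,f}(v_n)$ and $u_n:=t_n v_n\in\mathcal H$. The first key step is a two-sided bound on $t_n$. The critical-point equation $g_{v_n}'(t_n)=0$ reads $t_n=t_n^{2^*-1}A_n+B_n$, with $A_n:=\int_{\bn} a(v_n)_+^{2^*}\,\dvg$ and $B_n:=\langle f,v_n\rangle$; substituting back into $E_{\lambda,a,f}(v_n)=\bar{\mathcal I}_{\lambda,a,f}(t_n v_n)$ yields
\begin{equation*}
E_{\lambda,a,f}(v_n)=\frac{1}{N}\,t_n^2-\frac{N+2}{2N}\,t_n B_n,
\end{equation*}
which combined with $|B_n|\leq\|f\|_{\mathcal H'}$ and $E_{\lambda,a,f}(v_n)\to c$ bounds $t_n$ from above. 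Since $E_{\lambda,a,f}(v_n)$ is bounded, item (1) keeps $v_n$ uniformly away from $\partial\Sigma_+$, so $A_n$ is bounded below; together with the trivial upper bound $A_n\leq\bar a\,S_\lambda^{-2^*/2}$ from $\|v_n\|_\lambda=1$ and Sobolev, the inequality in \eqref{dI^2_neg} then gives $t_n\geq((2^*-1)\bar a\,S_\lambda^{-2^*/2})^{-1/(2^*-2)}>0$.

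Next I verify that $(u_n)$ is a PS sequence for $\bar{\mathcal I}_{\lambda,a,f}$: the energy identity $\bar{\mathcal I}_{\lambda,a,f}(u_n)=E_{\lambda,a,f}(v_n)\to c$ is immediate, while the definition of $t_n$ gives $\bar{\mathcal I}_{\lambda,a,f}'(u_n)[v_n]=0$, so decomposing $h\in\mathcal H$ orthogonally as $h=h^\perp+\langle h,v_n\rangle_\lambda v_n$ with $h^\perp\in T_{v_n}\Sigma_+$ and applying \eqref{der} yields $\bar{\mathcal I}_{\lambda,a,f}'(u_n)[h]=t_n^{-1}E'_{\lambda,a,f}(v_n)[h^\perp]\to 0$ uniformly in $\|h\|_\lambda\leq 1$. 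Testing against $(u_n)_-$ and using $f\geq 0$, as in Step~2 of the proof of Proposition~\ref{1st_sol}, gives $(u_n)_-\to 0$ in $\mathcal H$, so $(u_n)_+$ is a nonnegative PS sequence for $\mathcal I_{\lambda,a,f}$ at level $c$. Applying Theorem~\ref{ps_decom} to $(u_n)_+$ produces a critical point $u_0$ of $\mathcal I_{\lambda,a,f}$, integers $n_1,n_2$, and profiles $u_n^j, v_n^k$ such that $u_n=u_0+\sum_{j=1}^{n_1}u_n^j+\sum_{k=1}^{n_2}v_n^k+o(1)$ in $\mathcal H$, together with the asymptotic orthogonality conditions (1)--(2) and the energy identity. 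Since $t_n=\|u_n\|_\lambda$ is bounded and bounded below, and the decomposition gives $\|u_n\|_\lambda=\|u_0+\sum u_n^j+\sum v_n^k\|_\lambda+o(1)$, dividing by $t_n$ recovers the normalized decomposition of $v_n$ claimed in the statement, and the relation $E_{\lambda,a,f}(v_n)=\bar{\mathcal I}_{\lambda,a,f}(u_n)$ transfers the energy identity to $E_{\lambda,a,f}$.

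The main difficulty is the two-sided bound on $t_n$, which is the bridge between the constrained and unconstrained problems. The upper bound hinges on the algebraic identity for $E_{\lambda,a,f}(v_n)$ extracted from $g_{v_n}'(t_n)=0$, while the positive lower bound critically relies on item (1): without the quantitative control $\operatorname{dist}_{\mathcal H}(v_n,\partial\Sigma_+)\geq\delta>0$, the quantity $A_n$ could degenerate and \eqref{dI^2_neg} would no longer give a useful lower bound on $t_n$.
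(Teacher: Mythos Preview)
Your proof is correct and follows the same route as the paper: establish that $u_n:=t_{a,f}(v_n)v_n$ is a PS sequence for $\bar{\mathcal I}_{\lambda,a,f}$ (using the lower bound on $t_n$ from \eqref{dI^2_neg} and Sobolev), then invoke Theorem~\ref{ps_decom}. You are in fact more careful than the paper in several places---explicitly deriving the upper bound on $t_n$ from the identity $E_{\lambda,a,f}(v_n)=\tfrac1N t_n^2-\tfrac{N+2}{2N}t_nB_n$, handling the passage from $\bar{\mathcal I}_{\lambda,a,f}$ to $\mathcal I_{\lambda,a,f}$ via $(u_n)_-\to 0$, and spelling out the normalization step.

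One small inaccuracy in your closing commentary: the positive lower bound on $t_n$ does \emph{not} depend on item~(1). From \eqref{dI^2_neg} you have $t_n>((2^*-1)A_n)^{-1/(2^*-2)}$, and this is bounded below by a fixed positive constant as soon as $A_n$ is bounded \emph{above}, which is immediate from $\|v_n\|_\lambda=1$ and the Poincar\'e--Sobolev inequality (this is precisely the paper's one-line argument). If $A_n$ degenerated to $0$, the bound from \eqref{dI^2_neg} would force $t_n\to\infty$, not fail to be useful. So the logical role of item~(1) here is only as a consequence (boundedness of $E_{\lambda,a,f}(v_n)$ forces $A_n$ away from $0$), not as an input to the lower bound on $t_n$.
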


\medskip

\begin{proof}
	Using Lemma \ref{Lem_reIE}(ii) and \eqref{expval}, we get
	\begin{equation*}
		\begin{aligned}
			E_{\la,a, f}\left(v_{n}\right) & \geq(1-\varepsilon)^{\frac{2^*}{2^*-2}} E_{\la,a, 0}\left(v_n\right)-\frac{1}{2 \varepsilon}\|f\|_{\mathcal{H'}}^{2} \\
			& \geq(1-\varepsilon)^{\frac{2^*}{2^*-2}}\left(\frac{1}{2}-\frac{1}{2^*}\right)\left(\int_{\mathbb{B}^{N}} a(x) v_{n +}^{2^*} \mathrm{~d} V_{\mathbb{B}^{N}}\right)^{-\frac{2}{2^*-2}}-\frac{1}{2 \varepsilon}\|f\|_{\mathcal H'}^{2}.
		\end{aligned}
	\end{equation*}
As $\operatorname{dist}\left(v_{n}, \partial \Sigma_{+}\right) \rightarrow 0$ suggests $(v_n)_+ \rightarrow 0$ \mbox{in}\ $\mathcal{H}.$ Therefore using Poincar\'e-Sobolev inequality, it follows $(v_n)_+ \rightarrow 0$ in $L^{2^{\star}}(\bn).$ Thus we conclude 
$$\left|\int_{\mathbb{B}^{N}} a(x)(v_n)_+^{2^*} \mathrm{~d} V_{\mathbb{B}^{N}}\right| \leq\left.\|a\|\right._{\infty} \int_{\mathbb{B}^{N}} {(v_n)}_{+}^{2^*}\mathrm{~d} V_{\mathbb{B}^{N}} \xrightarrow{n} 0.
$$
As a result, $E_{\la,a, f}\left(v_{n}\right) \rightarrow \infty$ as dist$_{\mathcal H}\left(v_{n}, \partial \Sigma_{+}\right) \rightarrow 0.$ This completes the proof of part $(1).$

\medskip

 To prove $(2),$ we utilize \eqref{der} and \eqref{dI^2_neg} as follows
	\begin{equation*}
		\begin{aligned}
			\left\|\bar{\mathcal I}_{\la,a, f}^{\prime}\left(t_{a, f}\left(v_{n}\right) v_{n}\right)\right\|_{\mathcal{H'}} &=\frac{1}{t_{a, f}\left(v_{n}\right)}\left\|E_{\la,a, f}^{\prime}\left(v_{n}\right)\right\|_{T_{v_{n}}^{*} \Sigma_{+}}
			\\ & \leq\left((2^*-1) S_{\la}^{-\frac{2^*}{2}}\right)^{\frac{1}{2^*-2}}\left\|E_{\la,a, f}^{\prime}\left(v_{n}\right)\right\|_{T_{v_{n}}^* \Sigma_{+}} \stackrel{n}{\rightarrow} 0.\\
		\end{aligned}
	\end{equation*}
Further, we also know that $\bar{\mathcal I}_{\lambda,a,f}\left(t_{a,f}(v_{n})v_{n}\right)=E_{\la,a, f}\left(v_{n}\right) \rightarrow c$ as $n \rightarrow \infty$. Applying Palais-Smale lemma for $\bar{\mathcal I}_{\lambda,a,f}(u)$(Theorem \ref{ps_decom}), we get the desired result.
	\end{proof}
	 \medskip

  \begin{corollary}
      Assume $\|f\|_{\mathcal H'} \leq d_{2}$.  Then $E_{\la, a, f}(v)$ satisfies the condition $(\mathrm{PS})_{c}$ for $c< \mathcal I_{\lambda,a,f}\left(\mathcal{V}_{a, f} (x)\right)+  \mathcal I_{\la,1,0}(\mathcal U), $ where $\mathcal U$ is the unique radial solution of \eqref{E}, and $\mathcal{V}_{a, f} $ is the critical point of 
	$\mathcal I_{\la, a, f}$  found in Proposition~\ref{Pro_Locmin}.
	\label{cor1.8}
  \end{corollary}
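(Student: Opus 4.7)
The strategy is to take a Palais--Smale sequence $(v_n)\subset\Sigma_+$ for $E_{\la,a,f}$ at level $c<\mathcal I_{\la,a,f}(\mathcal V_{a,f})+\mathcal I_{\la,1,0}(\mathcal U)$ and show it is relatively compact. First, Proposition~\ref{ProPS_ELaf}(1) rules out $\mathrm{dist}_{\mathcal H}(v_n,\partial\Sigma_+)\to 0$, since otherwise $E_{\la,a,f}(v_n)\to\infty$, contradicting the finiteness of $c$. Thus $(t_{a,f}(v_n)v_n)$ is a bounded PS sequence for $\bar{\mathcal I}_{\la,a,f}$ at level $c$, and Proposition~\ref{ProPS_ELaf}(2) provides a critical point $u_0$ of $\bar{\mathcal I}_{\la,a,f}$, together with $n_1$ hyperbolic profiles $\mathcal U^j$ and $n_2$ localized Aubin--Talenti profiles $V^k$ at base points $y^k$, satisfying
\[
c=\mathcal I_{\la,a,f}(u_0)+\sum_{j=1}^{n_1}\mathcal I_{\la,1,0}(\mathcal U^j)+\sum_{k=1}^{n_2}a(y^k)^{\frac{2-N}{2}}J(V^k).
\]
The plan is to force $n_1=n_2=0$; strong convergence then follows.

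Each nontrivial profile contributes at least $\mathcal I_{\la,1,0}(\mathcal U)=\tfrac{1}{N}S_\lambda^{N/2}$ to the above sum. For hyperbolic profiles this is the ground-state property of $\mathcal U$. For Aubin--Talenti profiles, hypothesis \eqref{A2} gives $a(y^k)\leq 1$, hence $a(y^k)^{(2-N)/2}\geq 1$; together with $S_\lambda<S$ (see \cite[Proposition~3.2]{BGKM}) this yields $a(y^k)^{(2-N)/2}J(V^k)\geq J(V)=\tfrac{1}{N}S^{N/2}>\tfrac{1}{N}S_\lambda^{N/2}$. Consequently, whenever $n_1+n_2\geq 1$, the decomposition implies $c\geq \mathcal I_{\la,a,f}(u_0)+\mathcal I_{\la,1,0}(\mathcal U)$.

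The main obstacle is to establish the comparison $\mathcal I_{\la,a,f}(u_0)\geq \mathcal I_{\la,a,f}(\mathcal V_{a,f})$ for \emph{every} critical point $u_0$ of $\bar{\mathcal I}_{\la,a,f}$; inserted in the bound above, this contradicts the hypothesis on $c$ and forces $n_1+n_2=0$. Since $f\not\equiv 0$, the trivial function is not a critical point, so $u_0\neq 0$; the argument from Step~2 of Proposition~\ref{1st_sol} (testing against $(u_0)_-$ and using $f\geq 0$) gives $u_0\geq 0$ and $(u_0)_+\not\equiv 0$, so $v:=u_0/\|u_0\|_\lambda\in\Sigma_+$. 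Then $t:=\|u_0\|_\lambda>0$ is a critical point of $g_v(s):=\bar{\mathcal I}_{\la,a,f}(sv)$, and Lemma~\ref{Lem_gv} leaves only two alternatives. Either $t=t_{a,f}(v)$, in which case $\mathcal I_{\la,a,f}(u_0)=E_{\la,a,f}(v)>0\geq \bar{\mathcal I}_{\la,a,f}(\mathcal V_{a,f})$ by Lemma~\ref{Lem_reIE}(iii) and \eqref{1st sol energy}; or $0<t\leq \tfrac{N+2}{4}\|f\|_{\mathcal H'}\leq \tfrac{N+2}{4}d_2\leq r_1$, by the very definition of $d_2$. In this second case, if $t<r_1$ then $u_0\in B_{r_1}(0)$ and the uniqueness clause of Proposition~\ref{Pro_Locmin} forces $u_0=\mathcal V_{a,f}$; while $t=r_1$ is excluded because Proposition~\ref{Pro_Locmin}(ii) yields $\bar{\mathcal I}_{\la,a,f}(u_0)>0\geq \bar{\mathcal I}_{\la,a,f}(\mathcal V_{a,f})$.

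With $n_1=n_2=0$ in hand, the decomposition reduces to $t_{a,f}(v_n)v_n\to u_0$ in $\mathcal H$. Since $v_n\in\Sigma_+$, the scalar $t_{a,f}(v_n)=\|t_{a,f}(v_n)v_n\|_\lambda$ converges to $\|u_0\|_\lambda>0$, whence $v_n\to u_0/\|u_0\|_\lambda$ strongly in $\Sigma_+$, completing the verification of the $(\mathrm{PS})_c$ condition.
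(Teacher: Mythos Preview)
Your proof is correct and follows the same strategy as the paper: apply the profile decomposition of Proposition~\ref{ProPS_ELaf}, use $a\leq 1$ together with $S_\lambda<S$ to bound each bubble contribution below by $\mathcal I_{\la,1,0}(\mathcal U)$, and invoke the minimality of $\mathcal I_{\la,a,f}(\mathcal V_{a,f})$ among critical values to rule out $n_1+n_2\geq 1$. The only difference is cosmetic: where the paper simply asserts that $\mathcal V_{a,f}$ is the least-energy critical point (this is exactly Proposition~\ref{Relcrit_I&E}(iii) combined with Lemma~\ref{Lem_reIE}(iii) and \eqref{1st sol energy}), you re-derive that fact directly from Lemma~\ref{Lem_gv} and Proposition~\ref{Pro_Locmin}, and you also spell out the final strong-convergence step for $v_n$.
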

  \begin{proof}
       By Proposition \ref{ProPS_ELaf}, we observe that the (PS)$_c$ breaks down only for
       $$c=\mathcal I_{\lambda,a,f}(u_0)+ n_1\; \mathcal I_{\lambda,1,0}(\mathcal U)+\sum_{k=1}^{n_2} a(y^k)^{-\frac{N-2}{2}}J(V^k),$$
where $u_0$ is a critical point of $\mathcal I_{\la, a, f}$.
       Since $\mathcal{V}_{a, f} $ is a least energy critical point of $\mathcal I_{\la, a, f}$, the lowest level of breaking down of PS is
       $$ \min\left\{ \mathcal I_{\la, a, f}(\mathcal{V}_{a, f})+\mathcal I_{\la,1,0}(\mathcal U),\,\,\,\mathcal I_{\la, a, f}(\mathcal{V}_{a, f})+ a(y^1)^{-\frac{N-2}{2}}J(V^1)   \right\}.$$
 \noi      
We know that, for all $x\in \mathbb B^N$, $$\frac{J(V^1)}{\mathcal I_{\lambda,1,0}(\mathcal U)} = \Big(\frac{S}{S_\lambda}\Big)^\frac{N}{2}>1\geq a(x)^{\frac{N-2}{2}}\implies \mathcal I_{\lambda,1,0}(\mathcal U)< a(x)^{-\frac{N-2}{2}}J(V^1). $$ 
       Thus $$ \mathcal I_{\lambda,1,0}(\mathcal U)< a(y^1)^{-\frac{N-2}{2}}J(V^1). $$
        This concludes the proof.
  \end{proof}
  
  \medskip
  
	\begin{lemma}\label{lem4.dd}	
  Assume that \eqref{assum_A}, $\eqref{A2}$ and \eqref{A_cond} hold. Then  
	\begin{enumerate}
		\item $\inf _{v \in \Sigma_{+}} E_{\la,a, 0}(v)=\mathcal I_{\la,1,0}(\mathcal U)$.
		\item $\inf _{v \in \Sigma_{+}} E_{\la,a, 0}(v)$ is not attained.
		\item  $E_{\la,a, 0}(v)$ satisfies $(P S)_{c}$\, for\, $c \in\left(-\infty,\,\,\mathcal I_{\la,1,0}(\mathcal U)\right)$.  
	\end{enumerate}
\end{lemma}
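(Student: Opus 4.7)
My plan is to handle the three claims in order, relying centrally on the explicit formula \eqref{expval}, the identity $\|\mathcal{U}\|_\lambda^2=S_\lambda^{N/2}$ (obtained from testing the Euler--Lagrange equation for the Poincar\'e--Sobolev extremizer $\mathcal{U}$ against itself), and the profile decomposition in Proposition~\ref{ProPS_ELaf}.

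For (1), the lower bound is immediate. For any $v\in\Sigma_+$, $a\le 1$ together with Poincar\'e--Sobolev gives $\int a v_+^{2^*}\,\mathrm{d}V_{\bn}\le\int|v|^{2^*}\,\mathrm{d}V_{\bn}\le S_\lambda^{-N/(N-2)}$, so inserting this into \eqref{expval} yields $E_{\la,a,0}(v)\ge\tfrac{1}{N}S_\lambda^{N/2}=\mathcal{I}_{\la,1,0}(\mathcal{U})$. For the matching upper bound I test with the sliding family $v_n:=(\mathcal{U}\circ\tau_{-y_n})/\|\mathcal{U}\|_\lambda$, where $y_n\in\bn$ satisfies $d(y_n,0)\to\infty$. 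By Lemma~\ref{lemma1}, $v_n\in\Sigma_+$ and the change of variable $x=\tau_{y_n}(y)$ transforms
\[
\int_{\bn} a(x)v_n^{2^*}\,\mathrm{d}V_{\bn}=\|\mathcal{U}\|_\lambda^{-2^*}\int_{\bn} a(\tau_{y_n}(y))\,\mathcal{U}(y)^{2^*}\,\mathrm{d}V_{\bn}(y).
\]
The triangle inequality $d(\tau_{y_n}(y),0)\ge d(y_n,0)-d(y,0)\to\infty$ (pointwise in $y$), the decay \eqref{bubble decay} ensuring $\mathcal{U}^{2^*}\in L^1(\bn)$, and the bound $a\le 1$ let me invoke dominated convergence together with \eqref{assum_A} to obtain $\int a v_n^{2^*}\to\|\mathcal{U}\|_\lambda^{2-2^*}$. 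Substituting into \eqref{expval} gives $E_{\la,a,0}(v_n)\to\mathcal{I}_{\la,1,0}(\mathcal{U})$, completing (1).

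For (2), I argue by contradiction. If the infimum were attained at some $v_0\in\Sigma_+$, equality throughout the lower-bound chain would force simultaneously $v_0\ge 0$, $v_0$ to be an extremal of \eqref{S-inq}, and $a\equiv 1$ on $\{v_0>0\}$. By the Mancini--Sandeep classification every extremal of \eqref{S-inq} is (up to positive rescaling) a hyperbolic translate of $\mathcal{U}$, hence strictly positive throughout $\bn$. Therefore $a\equiv 1$ almost everywhere, contradicting \eqref{A2}.

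For (3), I first observe that on $\Sigma_+$ the quantity $\int a v_+^{2^*}$ is bounded above by Poincar\'e--Sobolev, so \eqref{expval} keeps $E_{\la,a,0}$ bounded away from zero by a positive constant; consequently no $(PS)_c$ sequence exists for $c\le 0$ and only $0<c<\mathcal{I}_{\la,1,0}(\mathcal{U})$ requires treatment. Given such a sequence, Proposition~\ref{ProPS_ELaf} with $f\equiv 0$ delivers
\[
c=\mathcal{I}_{\la,a,0}(u_0)+\sum_{j=1}^{n_1}\mathcal{I}_{\la,1,0}(\mathcal{U}^j)+\sum_{k=1}^{n_2}a(y^k)^{-(N-2)/2}J(V^k).
\]
Testing the Euler--Lagrange equation against $u_0$ shows $\mathcal{I}_{\la,a,0}(u_0)=\tfrac{1}{N}\|u_0\|_\lambda^2\ge 0$; each $\mathcal{U}^j$ is a nontrivial solution of \eqref{E}, so $\mathcal{I}_{\la,1,0}(\mathcal{U}^j)\ge\mathcal{I}_{\la,1,0}(\mathcal{U})$ by the least-energy property of $\mathcal{U}$; and $a(y^k)\le 1$ combined with $S_\lambda<S$ gives $a(y^k)^{-(N-2)/2}J(V^k)\ge J(V)=\tfrac{1}{N}S^{N/2}>\tfrac{1}{N}S_\lambda^{N/2}=\mathcal{I}_{\la,1,0}(\mathcal{U})$. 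Any surviving bubble therefore forces $c\ge\mathcal{I}_{\la,1,0}(\mathcal{U})$, a contradiction; hence $n_1=n_2=0$, $u_0\neq 0$ (else $c=0$), and Proposition~\ref{ProPS_ELaf} collapses to strong convergence $v_n\to u_0/\|u_0\|_\lambda$ in $\mathcal{H}$. I expect the main obstacle to be the upper-bound half of (1), where the normalization $\|v_n\|_\lambda=1$ must be maintained along the translations (via Lemma~\ref{lemma1}) while the limit $a\circ\tau_{y_n}\to 1$ is pushed through an integral against $\mathcal{U}^{2^*}$; fortunately the exponential decay \eqref{bubble decay} makes $\mathcal{U}^{2^*}\in L^1(\bn)$, so dominated convergence closes the argument with $a\le 1$ as envelope, and the remaining parts reduce to direct application of \eqref{expval} together with Proposition~\ref{ProPS_ELaf}.
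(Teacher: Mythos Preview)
Your proof is correct and follows essentially the same route the paper points to via the reference to \cite{GGS}, Lemma~7.2: the formula \eqref{expval} combined with $a\le 1$ and Poincar\'e--Sobolev for the lower bound in (1), the sliding family $\mathcal{U}\circ\tau_{-y_n}$ and dominated convergence for the upper bound, the rigidity of Poincar\'e--Sobolev extremals for (2), and the energy bookkeeping from the profile decomposition for (3).

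One simplification worth recording: once Part~(1) is in hand, Part~(3) is in fact vacuous.  Since $E_{\la,a,0}(v)\ge\mathcal I_{\la,1,0}(\mathcal U)$ for every $v\in\Sigma_+$, no sequence $(v_n)\subset\Sigma_+$ can satisfy $E_{\la,a,0}(v_n)\to c$ for any $c<\mathcal I_{\la,1,0}(\mathcal U)$, so the $(PS)_c$ condition holds trivially in that range.  Your invocation of Proposition~\ref{ProPS_ELaf} is therefore unnecessary here (though not incorrect).
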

 \begin{proof}
The proof of the above lemma follows in the same line as of \cite[Lemma~7.2]{GGS} with minor modifications for the case $p = 2^{\star}.$ Note that the assumption $a \not\equiv 1$ is crucial in this proof. We skip the proof for brevity. 
\end{proof}
  
	\begin{lemma} \label{centofm}
	Let $a(.)$ be as in Theorem \ref{Thm-A2}. Then there exists a constant $\delta_{0}>0$ such that  if $E_{\la,a, 0}(v) \leq \mathcal I_{\la,1,0}(\mathcal U)+\delta_{0}$, then
	\begin{equation}
		\int_{\mathbb{B}^{N}} \frac{x}{m(x)}|v(x)|^{2^*} \mathrm{~d} V_{\mathbb{B}^{N}}(x) \neq 0,
	\end{equation}
	where $m(x) >0$ is defined such that $d(\frac{x}{m},0)= \frac{1}{2}$, i.e., $m(x)= \frac{|x|}{\tanh \left(\frac{1}{4}\right)}.$ 
\end{lemma}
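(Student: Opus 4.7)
The plan is to argue by contradiction, exploiting the Palais--Smale decomposition together with the fact that at the infimum level the concentration profile is essentially unique. Suppose no such $\delta_0$ exists. Then, since Lemma~\ref{lem4.dd}(1) gives $\inf_{\Sigma_+} E_{\la,a,0} = \mathcal{I}_{\la,1,0}(\mathcal U)$, there is a sequence $(v_n)\subset \Sigma_+$ with $E_{\la,a,0}(v_n)\to \mathcal{I}_{\la,1,0}(\mathcal U)$ and $\int_{\bn}\frac{x}{m(x)}|v_n|^{2^*}\dvg=0$ for every $n$. Applying Ekeland's variational principle on the manifold $\Sigma_+$ (with Lemma~\ref{lem4.dd}(1) providing the coercivity needed to stay away from $\partial\Sigma_+$), I would extract a Palais--Smale sequence $\tilde v_n$ for $E_{\la,a,0}$ at level $\mathcal{I}_{\la,1,0}(\mathcal U)$ with $\|\tilde v_n-v_n\|_\la\to 0$.

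Next, I apply Proposition~\ref{ProPS_ELaf} with $f\equiv 0$ to $(\tilde v_n)$: up to a subsequence, it splits into a critical point $u_0$ of $\mathcal{I}_{\la,a,0}$, finitely many hyperbolic bubbles $\mathcal{U}^j\circ T_n^j$ (with $T_n^j(0)\to\infty$), and Aubin--Talenti bubbles $v_n^k$ with concentration points $y^k$, whose energies satisfy
\[
\mathcal{I}_{\la,1,0}(\mathcal U)\;=\;\mathcal{I}_{\la,a,0}(u_0)\;+\;\sum_{j}\mathcal{I}_{\la,1,0}(\mathcal U^j)\;+\;\sum_k a(y^k)^{-\frac{N-2}{2}} J(V^k).
\]
Each term is non-negative, and the last two come with strict lower bounds: since $a\le 1$ and $S_\la<S$ (see \cite{BGKM}), any Aubin--Talenti contribution is at least $J(V)=S^{N/2}/N>S_\la^{N/2}/N=\mathcal{I}_{\la,1,0}(\mathcal U)$, and any hyperbolic bubble contributes at least $\mathcal{I}_{\la,1,0}(\mathcal U)$. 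Moreover, if $u_0\not\equiv 0$, testing its Euler--Lagrange equation against itself and combining $a\le 1$ with the Poincar\'e--Sobolev inequality yields $\|u_0\|_\la^2\ge S_\la^{N/2}$, and equality is impossible since it would force $u_0$ to be a positive Poincar\'e--Sobolev extremal (hence to have full support in $\bn$) while simultaneously $a\equiv 1$ on the support of $u_0$, contradicting~\eqref{A2}. Thus the only profile compatible with the target energy is $u_0\equiv 0$, no Aubin--Talenti bubble, and exactly one hyperbolic ground-state bubble $\mathcal U\circ T_n$ with $T_n(0)\to\infty$, so that $\tilde v_n = \frac{\mathcal U\circ T_n}{\|\mathcal U\|_\la}+o(1)$ in $\mathcal H$.

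Finally, I evaluate the centre-of-mass integral along this profile. By isometry invariance (Lemma~\ref{lemma1}),
\[
\int_{\bn}\frac{x}{m(x)}(\mathcal U\circ T_n)^{2^*}(x)\dvg \;=\; \int_{\bn}\frac{T_n^{-1}(y)}{m(T_n^{-1}(y))}\,\mathcal U(y)^{2^*}\dvg.
\]
Writing $T_n=\tau_{b_n}\circ A_n$ with $b_n=T_n(0)$, $|b_n|\to 1$, and $A_n\in O(N)$, the explicit formula~\eqref{hyperbolictranslation} shows that for every fixed $y\in\bn$, $T_n^{-1}(y)=A_n^{-1}(\tau_{-b_n}(y))$ converges, along a subsequence, to a single unit vector $-\tilde\omega\in S^{N-1}$. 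Since $|x/m(x)|\equiv\tanh(1/4)$ is bounded and $\mathcal U\in L^{2^*}(\bn)$, dominated convergence gives
\[
\int_{\bn}\frac{T_n^{-1}(y)}{m(T_n^{-1}(y))}\,\mathcal U(y)^{2^*}\dvg \;\longrightarrow\; -\tanh(1/4)\,\tilde\omega\, \|\mathcal U\|_{2^*}^{2^*} \;\ne\; 0.
\]
Combined with $\|v_n-\tilde v_n\|_\la\to 0$ (which via the Sobolev embedding gives $L^1$-convergence of $|v_n|^{2^*}$ to $|\tilde v_n|^{2^*}$, hence convergence of the weighted integrals), this contradicts the hypothesis that the centre-of-mass integral vanishes for every $n$.

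The main obstacle I foresee is the case analysis in the second step: one must rigorously exclude a ``ghost'' nontrivial critical point $u_0$ of $\mathcal{I}_{\la,a,0}$ whose energy is close to $\mathcal{I}_{\la,1,0}(\mathcal U)$ while no bubble is present. The strict gap $S_\la<S$ alone is not sufficient; the non-triviality assumption~\eqref{A2} enters decisively through the fact that Poincar\'e--Sobolev extremals are positive and thus have full support in $\bn$, which is what forces the contradiction with $a\not\equiv 1$.
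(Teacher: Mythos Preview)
Your argument is correct and follows essentially the same route as the paper: contradiction, Ekeland's variational principle to produce a Palais--Smale sequence at level $\mathcal I_{\la,1,0}(\mathcal U)$, the profile decomposition to pin down the unique compatible profile (one hyperbolic ground-state bubble, no weak limit, no Aubin--Talenti piece), and then the explicit center-of-mass computation. The paper's proof is more terse and defers the last two steps to \cite[Lemma~7.3]{GGS}; you spell them out, including the crucial use of~\eqref{A2} to rule out a nontrivial critical point $u_0$ at the equality level, and the dominated-convergence evaluation of $\int \frac{T_n^{-1}(y)}{m(T_n^{-1}(y))}\mathcal U^{2^*}\dvg$.
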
 

\begin{proof}
	We will argue by contradiction. Let us assume that there exists a sequence $\left\{v_{n}\right\} \subset \Sigma_{+}$ such that the following holds
	\begin{center}
		$E_{\lambda,a, 0}\left(v_{n}\right) \leq \mathcal I_{\la,1,0}(\mathcal U)+\frac{1}{n}$ and $\int_{\mathbb{B}^{N}} \frac{x}{m}|v_{n}(x)|^{2^*} \mathrm{~d} V_{\mathbb{B}^{N}}(x) \stackrel{n \rightarrow \infty}{\longrightarrow} 0$.    
	\end{center}
 \noi
 Then, according to Ekeland's variational principle, there exists a
 $\tilde{v}_{n} \subset \Sigma_{+}$ such that
	\begin{equation*}
		\begin{aligned}
			&\left\|v_{n}-\tilde{v}_{n}\right\|_{\la}  \stackrel{n \rightarrow \infty}{\longrightarrow} 0, \\
			E_{\lambda,a, 0}\left(\tilde{v}_{n}\right) &\leq E_{\lambda,a, 0}\left(v_{n}\right)\leq \mathcal I_{\la,1,0}(\mathcal U)+\frac{1}{n}, \\
			E_{\lambda,a, 0}^{\prime}\left(\tilde{v}_{n}\right) &\stackrel{n \rightarrow \infty}{\longrightarrow} 0 \text { in } \mathcal{H'}.
		\end{aligned}
	\end{equation*} 
 \noi
 The above suggests that $(\tilde{v}_{n})_n$ is a Palais Smale sequence for $E_{\lambda,a, 0}$ at the level $\mathcal I_{\la,1,0}(\mathcal U)$. Thus we have found a PS which is very much close to $v_n$. Then we can apply the PS decomposition theorem to $E_{\lambda, a,0}$. Now from the growth restriction condition, i.e., $E_{\lambda, a, 0}(\tilde{v}_{n}) \leq \mathcal I_{\la,1,0}(\mathcal U)+\frac{1}{n}$, and using the fact $ \mathcal I_{\lambda,1,0}(\mathcal U)< a(y^k)^{-\frac{N-2}{2}}J(V^k)$ for any $k$, we will only have at most one hyperbolic bubble in the decomposition. The rest of the proof follows using Proposition~\ref{ProPS_ELaf} and \cite[Lemma 7.3]{GGS}.
\end{proof}

\medskip

Now we will use Lusternik-Schnirelmn (L-S) category theory to find the second and third positive solutions to  \eqref{Paf}. The (L-S) category of $A$ concerning $M$ is denoted by cat $(A, M)$. 
Particularly, cat $(M)$ denotes cat $(M, M)$. In addition, we shall use the following notation: 
\begin{equation*}
\left[E_{\la,a, f} \leq c\right]=\left\{u \in \Sigma_{+} \,: \, E_{\la,a, f}(u) \leq c\right\}
\end{equation*}
for $c \in \mathbb{R}$.
For fundamental properties of the L-S category, we refer to Ambrosetti \cite{Ambro}. Here, we use the following property \cite{Ambro} (also
see \cite[Proposition 2.4]{Adachi}) which will play a pivotal role to obtain the second and third solutions of \eqref{Paf}.
\begin{proposition}\cite{Ambro}\label{multi-cat}
	Suppose $M$  is a Hilbert manifold and  $\Psi \in C^{1}(M, \mathbb{R})$. Assume that for $c_{0} \in \mathbb{R}$ and $k \in \mathbb{N}$
	
	\begin{enumerate}
		\item $\Psi(x)$ satisfies $(P S)_{c}$ for $c \leq c_{0}.$
		\item $\operatorname{cat}\left(\left\{x \in M: \Psi(x) \leq c_{0}\right\}\right) \geq k.$
	\end{enumerate}
	Then $\Psi(x)$ has at least $k$ critical points in $\left\{x \in M: \Psi(x) \leq c_{0}\right\}$.
	\label{propcrit}
\end{proposition}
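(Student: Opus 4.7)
The plan is to apply the classical Lusternik--Schnirelmann minimax machinery at the level of Hilbert manifolds. For each $j=1,\dots,k$ introduce the family
\begin{equation*}
\mathcal{A}_j \;=\; \bigl\{A \subset [\Psi \le c_0] : A \text{ is closed and } \mathrm{cat}_{[\Psi\le c_0]}(A) \ge j \bigr\},
\end{equation*}
and the associated minimax values
\begin{equation*}
c_j \;=\; \inf_{A\in\mathcal{A}_j}\;\sup_{x\in A}\Psi(x).
\end{equation*}
Hypothesis (2) guarantees $[\Psi\le c_0]\in\mathcal{A}_k$, so $\mathcal{A}_j\neq\emptyset$ and $-\infty<c_1\le c_2\le\cdots\le c_k\le c_0$ (boundedness from below follows because if $\mathcal{A}_j$ contained sets with arbitrarily small $\sup\Psi$, the PS condition near those levels would force empty critical sets, yet category-$j$ subsets cannot escape to a point).

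The first main step is to show that each $c_j$ is a critical value of $\Psi$. I would argue by contradiction: assume $c_j$ is regular. Since $\Psi$ satisfies $(PS)_{c_j}$ (as $c_j\le c_0$), the set of near-critical points at level $c_j$ is compact and empty in the limit, so the standard deformation lemma on a $C^{1,1}$ Hilbert manifold produces $\varepsilon>0$ and a continuous deformation $\eta_1:M\to M$, homotopic to $\mathrm{id}_M$ through maps preserving $[\Psi\le c_0]$, with
\begin{equation*}
\eta_1\bigl([\Psi\le c_j+\varepsilon]\bigr)\;\subset\;[\Psi\le c_j-\varepsilon].
\end{equation*}
Pick $A\in\mathcal{A}_j$ with $\sup_A\Psi\le c_j+\varepsilon$. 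Then $\eta_1(A)$ is closed, lies in $[\Psi\le c_j-\varepsilon]$, and by the basic homotopy invariance of category satisfies $\mathrm{cat}_{[\Psi\le c_0]}(\eta_1(A))\ge \mathrm{cat}_{[\Psi\le c_0]}(A)\ge j$. This contradicts the definition of $c_j$, so $c_j$ must be critical.

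The second step handles the case of coinciding levels in order to produce $k$ \emph{distinct} critical points. Suppose $c_j=c_{j+1}=\cdots=c_{j+m}=:c^\ast$. Denoting by $K_{c^\ast}$ the compact (by PS) set of critical points at $c^\ast$, I would show $\mathrm{cat}_{[\Psi\le c_0]}(K_{c^\ast})\ge m+1$, whence $K_{c^\ast}$ is infinite. If not, choose a closed neighborhood $U$ of $K_{c^\ast}$ of the same category, and apply the deformation lemma \emph{outside} $U$ to push a set $A\in\mathcal{A}_{j+m}$ with $\sup_A\Psi$ close to $c^\ast$ below $c^\ast-\varepsilon$ on $A\setminus U$; using the subadditivity $\mathrm{cat}(A)\le\mathrm{cat}(A\setminus U)+\mathrm{cat}(\overline U)$ one obtains a set in $\mathcal{A}_{j+m}$ with $\sup\Psi<c^\ast$, contradicting the definition of $c_{j+m}$. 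Combining the two steps, the number of critical points of $\Psi$ in $[\Psi\le c_0]$ is at least $k$.

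The main technical obstacle is the deformation lemma on the manifold $M$: one needs a locally Lipschitz pseudo-gradient vector field tangent to $M$, together with an invariant neighborhood argument ensuring the negative pseudo-gradient flow stays inside $[\Psi\le c_0]$ long enough. This is standard for $C^1$ functionals on $C^{1,1}$ Hilbert manifolds, but it is the place where the $(PS)_c$ hypothesis for all $c\le c_0$ (not just at $c_0$) is genuinely used, since one must deform arbitrarily low sublevel sets as well.
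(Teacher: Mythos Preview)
The paper does not prove this proposition at all: it is quoted verbatim from Ambrosetti's memoir \cite{Ambro} and used as a black box, so there is no ``paper's own proof'' to compare against. Your sketch is precisely the classical Lusternik--Schnirelmann minimax argument that one finds in that reference (and in standard texts such as Struwe or Willem), so in spirit you are reproducing exactly what the citation points to.

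Two small points worth tightening. First, your justification that $c_1>-\infty$ is not quite an argument; in the standard treatment one either assumes $\Psi$ is bounded below on the sublevel set, or observes that the $(PS)_c$ condition for all $c\le c_0$ together with Ekeland's principle forces $\inf_{[\Psi\le c_0]}\Psi$ to be finite (and in the paper's application this is in any case guaranteed by Lemma~\ref{Lem_reIE}(iii)). Second, in your multiplicity step the deformed set $\eta_1(\overline{A\setminus U})$ has category $\ge (j+m)-m=j$, not $\ge j+m$; the contradiction is with the definition of $c_j$ (which equals $c^\ast$), not $c_{j+m}$. Finally, the deformation lemma requires only $\Psi\in C^1$ on a $C^1$ Hilbert manifold, not $C^{1,1}$: the pseudo-gradient lemma supplies a locally Lipschitz vector field under the $C^1$ hypothesis alone.
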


We recall another very well-known lemma concerning the (L-S) category theory. 

\begin{lemma}(\cite{Adachi}, Lemma 2.5)\label{lemcat}
	Let $N \geq 1$ and $M$ be a topological space. Assume that there exist two continuous mappings
	\begin{equation*}
		F: S^{N-1}_{\mathbb{B}^{N}}\left(:=\left\{x \in \mathbb{B}^{N}:d(x,0)=1\right\}\right) \rightarrow M, \quad G: M \rightarrow S^{N-1}_{\mathbb{B}^{N}}
	\end{equation*}
	such that $G \circ F$ is homotopic to the identity map Id: $S^{N-1}_{\mathbb{B}^{N}} \rightarrow S^{N-1}_{\mathbb{B}^{N}}$, i.e,  there is a continuous $\operatorname{map} \eta:[0,1] \times S^{N-1}_{\mathbb{B}^{N}} \rightarrow S^{N-1}_{\mathbb{B}^{N}}$ such that
	\begin{equation*}
		\begin{aligned}
			&\eta(0, x)=(G \circ F)(x) \quad \forall\, x \in S^{N-1}_{\mathbb{B}^{N}}, \\
			&\eta(1, x)=x \quad \forall\, x \in S^{N-1}_{\mathbb{B}^{N}}.
		\end{aligned}
	\end{equation*}
	Then cat $(M) \geq 2$.
\end{lemma}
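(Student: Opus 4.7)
The approach is a short proof by contradiction. Suppose $\mathrm{cat}(M) = 1$; by the definition of Lusternik--Schnirelmann category, this means $M$ is itself contractible in $M$, i.e., there exist a continuous map $H \colon [0,1] \times M \to M$ and a point $m_0 \in M$ with $H(0,m) = m$ and $H(1,m) = m_0$ for every $m \in M$. The plan is then to transport this contraction back to the sphere $S^{N-1}_{\mathbb{B}^N}$ through $F$ and $G$, and thereby contradict the non-contractibility of $S^{N-1}_{\mathbb{B}^N}$.

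Concretely, I would first define $\widetilde{H} \colon [0,1] \times S^{N-1}_{\mathbb{B}^N} \to S^{N-1}_{\mathbb{B}^N}$ by $\widetilde{H}(t,x) := G\bigl(H(t, F(x))\bigr)$. By continuity of $F$, $H$, and $G$, the map $\widetilde{H}$ is continuous, with $\widetilde{H}(0,\cdot) = G \circ F$ and $\widetilde{H}(1,\cdot) \equiv G(m_0)$. Second, I would invoke the hypothesis that $G\circ F$ is homotopic to $\mathrm{Id}_{S^{N-1}_{\mathbb{B}^N}}$ via $\eta$, and concatenate $\eta$ (run from $\mathrm{Id}$ to $G \circ F$ on $[0,1/2]$) with $\widetilde{H}$ (run from $G\circ F$ to the constant map on $[1/2,1]$), using the standard pasting reparametrisation. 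This produces a continuous homotopy between $\mathrm{Id}_{S^{N-1}_{\mathbb{B}^N}}$ and the constant map $x \mapsto G(m_0)$, exhibiting a contraction of $S^{N-1}_{\mathbb{B}^N}$.

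Finally, I would conclude using the non-contractibility of $S^{N-1}_{\mathbb{B}^N}$. As a subset of $\mathbb{B}^N$, the hyperbolic sphere of radius $1$ centred at the origin coincides with the Euclidean sphere of radius $\tanh(1/2)$, and is therefore homeomorphic to the standard Euclidean $(N-1)$-sphere. For $N=1$ it is the two-point space $S^0$, which is not path-connected; for $N \geq 2$ the top singular homology $H_{N-1}(S^{N-1};\mathbb{Z}) \cong \mathbb{Z}$ is nontrivial, so $S^{N-1}$ is not contractible. Either way, we obtain a contradiction, forcing $\mathrm{cat}(M) \geq 2$.

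The argument is essentially formal and I do not anticipate a genuine obstacle. The only mildly delicate points are unpacking the definition $\mathrm{cat}(M) = 1$ to extract a global contraction $H$ defined on all of $M$ (rather than merely on a closed covering piece), and arranging the concatenation of $\eta$ with $\widetilde{H}$ so that the resulting map remains continuous at the gluing time $t = 1/2$; both are handled by the standard pasting lemma.
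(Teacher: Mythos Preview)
Your proof is correct and is the standard argument for this well-known fact. Note that the paper does not actually prove this lemma: it is stated with a citation to \cite{Adachi}, Lemma~2.5, and no proof is given in the paper itself. Your contradiction argument via contractibility is exactly the expected one.
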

\noi
Keeping in mind the aforementioned lemma, our next objective will be to create two mappings:
\begin{equation*}
	\begin{aligned}
		&F: S^{N-1}_{\mathbb{B}^{N}} \rightarrow\left[E_{\la,a, f} \leq \bar{\mathcal I}_{\lambda,a,f}\left(\mathcal{V}_{a, f}\right)+\mathcal I_{\la,1,0}(\mathcal U)-\varepsilon\right], \\
		&G:\left[E_{\la,a, f} \leq \bar{\mathcal I}_{\lambda,a,f}\left(\mathcal{V}_{a, f}\right)+\mathcal I_{\la,1,0}(\mathcal U)-\varepsilon\right] \rightarrow S^{N-1}_{\mathbb{B}^{N}},
	\end{aligned}
\end{equation*}
such that $G \circ F$ is homotopic to the identity map.

\medskip

To this end, let us define $F_{R}: S^{N-1}_{\mathbb{B}^{N}} \rightarrow \Sigma_{+}$ as follows:
For $d(y,0) \geq R_{0}$, where $R_0$ is as determined by Proposition \ref{energy-prop}, we will find $s=s(f, y)$ such that


\begin{equation}
	\left\|\mathcal{V}_{a, f}+s \mathcal U(\tau_{-y}(.))\right\|_{\la}=t_{a, f}\left(\frac{\mathcal{V}_{a, f}+s \mathcal U(\tau_{-y}(.))}{\left\|\mathcal{V}_{a, f}+s \mathcal U(\tau_{-y}(.))\right\|_{\la}}\right). \label{4.ll}
\end{equation}

\begin{proposition}(\cite{Adachi}, Proposition 2.6)
	Let $a(.)$ be as described in Theorem \ref{Thm-A2}. Then there exist $d_{3} \in\left(0, d_{2}\right]$ and $R_{1}>R_{0}$ such that for any $\|f\|_{\mathcal{H'}} \leq d_{3}$ and any $d(y,0) \geq R_{1}$, there exists a unique $s=s(f, y)>0$ in a neighbourhood of 1, satisfying \eqref{4.ll}. Additionally,
	\begin{equation*}
		\left\{y \in \mathbb{B}^{N}:d(y,0)\geq R_{1}\right\} \rightarrow(0, \infty) ; \quad y \mapsto s(f, y)
	\end{equation*}
	is continuous.
\end{proposition}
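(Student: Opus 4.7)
The plan is to reformulate \eqref{4.ll} as a scalar root--finding problem for $s$ and then apply the implicit function theorem. Writing $w_y(x) := \mathcal{U}(\tau_{-y}(x))$, setting $u = \mathcal{V}_{a,f} + s\, w_y$ and $v = u/\|u\|_\lambda$, the defining identity $g_v'(t_{a,f}(v)) = 0$ (cf.\ Lemma~\ref{Lem_gv}) together with the hypothesis $\|u\|_\lambda = t_{a,f}(v)$ from \eqref{4.ll} reduces after a short calculation to
\begin{equation*}
\Psi(s;y,f) \;:=\; \bar{\mathcal I}_{\lambda,a,f}'(u)[u] \;=\; \|u\|_\lambda^2 - \int_{\bn} a(x)\,u_+^{2^*} \dvg - \langle f, u\rangle \;=\; 0.
\end{equation*}
By the strict convexity on $B_{r_1}(0)$ from Proposition~\ref{Pro_Locmin} together with $\mathcal V_{a,0} = 0$, the map $f \mapsto \mathcal{V}_{a,f}$ is continuous into $\mathcal H$ with $\mathcal V_{a,f} \to 0$ as $\|f\|_{\mathcal H'} \to 0$; hence $\Psi$ is continuous jointly in $(s,y,f)$ and of class $C^1$ in $s$.

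The next step is to analyse $\Psi$ and $\partial_s \Psi$ as $d(y,0)\to\infty$ and $\|f\|_{\mathcal H'}\to 0$. Since $w_y \rightharpoonup 0$ weakly in $\mathcal H$ (as $\tau_{-y}(\cdot)$ carries compact sets off to infinity), a truncation argument together with the exponential decay estimate \eqref{bubble decay} shows that every cross--interaction $\langle \mathcal V_{a,f}, w_y\rangle_\lambda$, $\int a\, \mathcal V_{a,f}^{2^*-1}\, w_y$, $\langle f, w_y\rangle$, and mixed term in the expansion of $(\mathcal V_{a,f}+s\,w_y)^{2^*}$ is $o(1)$, uniformly for $s$ in any fixed compact subset of $(0,\infty)$ and $\|f\|_{\mathcal H'} \leq d_2$. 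By the change of variable $z = \tau_{-y}(x)$ and Lemma~\ref{lemma1}(iii), one has $\int a(x) w_y^{2^*}\dvg = \int a(\tau_y(z))\,\mathcal U(z)^{2^*}\dvg$, and assumption \eqref{assum_A} combined with dominated convergence yields $\int a(\tau_y(z))\,\mathcal U(z)^{2^*}\dvg \to \int \mathcal U^{2^*}\dvg$. Inserting $\bar{\mathcal I}_{\lambda,a,f}'(\mathcal V_{a,f})[\mathcal V_{a,f}] = 0$ for the $\mathcal V_{a,f}$--only part and using $\|\mathcal U\|_\lambda^2 = \int_{\bn} \mathcal U^{2^*}\dvg$ from \eqref{E}, we obtain
\begin{equation*}
\Psi(s;y,f) \longrightarrow (s^2 - s^{2^*})\,\|\mathcal U\|_\lambda^2, \qquad \partial_s\Psi(s;y,f) \longrightarrow (2s - 2^*s^{2^*-1})\,\|\mathcal U\|_\lambda^2,
\end{equation*}
so in particular $\Psi(1;y,f) = o(1)$ and $\partial_s\Psi(1;y,f) \to (2-2^*)\|\mathcal U\|_\lambda^2 < 0$.

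Combining the two displays, I fix a small neighbourhood $I$ of $1$ on which the limiting derivative is strictly negative and choose $R_1 > R_0$ and $d_3 \in (0,d_2]$ so that, uniformly for $d(y,0) \geq R_1$ and $\|f\|_{\mathcal H'}\leq d_3$: (i) $\partial_s\Psi(s;y,f) \leq \tfrac{1}{2}(2 - 2^*)\|\mathcal U\|_\lambda^2 < 0$ for $s \in I$; and (ii) $|\Psi(1;y,f)|$ is small enough that strict monotonicity on $I$ forces a unique zero $s = s(f,y) \in I$, which then lies in a neighbourhood of $1$. Continuity of $(f,y) \mapsto s(f,y)$ follows from the implicit function theorem using the non--vanishing derivative together with the joint continuity of $\Psi$. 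The main obstacle is ensuring that the interaction and decay estimates are genuinely uniform in $f$ and $s$ on the relevant ranges: this relies crucially on the exponential decay \eqref{bubble decay} of $\mathcal U$, on the continuity $\mathcal V_{a,f} \to 0$ in $\mathcal H$, and on the translation invariance recorded in Lemma~\ref{lemma1}(iii), exactly in the spirit of the Euclidean analogue \cite[Proposition~2.6]{Adachi}.
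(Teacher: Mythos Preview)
Your approach is correct and is precisely the implicit function theorem argument behind the cited \cite[Proposition~2.6]{Adachi}; the paper itself gives no proof and defers to that reference. One point deserves an extra line: the equation $\Psi(s;y,f)=0$ you derive is equivalent only to $g_v'(\|u\|_\lambda)=0$, and by Lemma~\ref{Lem_gv} this does not by itself single out $t_{a,f}(v)$ among the (at most two) critical points of $g_v$. To close this, note that for $s$ near $1$ and $\|f\|_{\mathcal H'}\le d_3$ one has $\|u\|_\lambda\to\|\mathcal U\|_\lambda>0$, whereas by Lemma~\ref{Lem_gv}(iii) the other critical point lies in $\big[0,\tfrac{N+2}{4}\|f\|_{\mathcal H'}\big]$; shrinking $d_3$ separates the two, so the root you find is indeed $t_{a,f}(v)$. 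Equivalently, your own asymptotics give $\bar{\mathcal I}''_{\lambda,a,f}(u)[u,u]\to(2-2^*)\|\mathcal U\|_\lambda^2<0$ at the root, which is exactly the second--order characterisation \eqref{dI^2_neg} of $t_{a,f}(v)$.
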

\noi
	We are now going to define a function $F_{R}: S^{N-1}_{\mathbb{B}^{N}} \rightarrow \Sigma_{+}$ as follows:
\begin{equation*}
	F_{R}(y)=\frac{\mathcal{V}_{a, f}+s(f, \frac{\tanh (\frac{R}{2})}{\tanh\frac{1}{2}} y) \mathcal U(\tau_{-\frac{\tanh (\frac{R}{2})}{\tanh\frac{1}{2}}  y}(x))}{\left\|{\mathcal{V}_{a, f}+s(f, \frac{\tanh (\frac{R}{2})}{\tanh\frac{1}{2}} y) \mathcal U(\tau_{-\frac{\tanh (\frac{R}{2})}{\tanh\frac{1}{2}}  y}(x))}\right\|_{\la}}
\end{equation*}
for $\|f\|_{\mathcal{H'}} \leq d_{3}$ and $R \geq R_{1}$.\\
\noi
Then we have,
\begin{proposition}
	For $0<\|f\|_{\mathcal{H'}} \leq d_{3}$ and $R \geq R_{1}$, there exists $\varepsilon_{0} = \varepsilon_{0}(R)>0$ such that
	\begin{equation*}
		F_{R}\left(S^{N-1}_{\mathbb{B}^{N}}\right) \subseteq\left[E_{\la,a, f} \leq \bar{\mathcal I}_{\lambda,a,f}\left(\mathcal{V}_{a, f}\right)+\mathcal I_{\la,1,0}(\mathcal U)-\varepsilon_{0}\right].
	\end{equation*}
\end{proposition}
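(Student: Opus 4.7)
The plan is to reduce the proposition to a pointwise strict inequality on the compact manifold $S^{N-1}_{\mathbb{B}^N}$, then use compactness and continuity to upgrade it to the desired uniform estimate. First I would use the defining relation \eqref{4.ll} of $s = s(f,\tilde y)$, where $\tilde y := \frac{\tanh(R/2)}{\tanh(1/2)}\, y$ has hyperbolic distance $d(\tilde y,0) = R$ from the origin. Setting $u_y := \mathcal{V}_{a,f} + s(f,\tilde y)\,\mathcal U(\tau_{-\tilde y}(\cdot))$, relation \eqref{4.ll} says exactly that $\|u_y\|_\lambda = t_{a,f}(F_R(y))$, so $u_y = t_{a,f}(F_R(y))\,F_R(y)$. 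By the characterization of $t_{a,f}$ in Lemma~\ref{Lem_gv}(ii) (and the definition of $E_{\lambda,a,f}$), this gives
\begin{equation*}
E_{\lambda,a,f}(F_R(y)) \;=\; \bar{\mathcal I}_{\lambda,a,f}\bigl(t_{a,f}(F_R(y))\,F_R(y)\bigr) \;=\; \bar{\mathcal I}_{\lambda,a,f}\bigl(\mathcal{V}_{a,f} + s(f,\tilde y)\,\mathcal U(\tau_{-\tilde y}(\cdot))\bigr).
\end{equation*}

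Next I would invoke the key energy estimate Proposition~\ref{enerest}. Since $\|f\|_{\mathcal H'}\le d_3\le d_2$ and $d(\tilde y,0)= R \ge R_1 > R_0$, Proposition~\ref{enerest} applied with $t = s(f,\tilde y) > 0$ yields the strict pointwise inequality
\begin{equation*}
E_{\lambda,a,f}(F_R(y)) \;<\; \bar{\mathcal I}_{\lambda,a,f}(\mathcal{V}_{a,f}) + \mathcal I_{\lambda,1,0}(\mathcal U) \qquad \text{for every } y \in S^{N-1}_{\mathbb{B}^N}.
\end{equation*}

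To pass from this pointwise strict inequality to a uniform one, I would argue by compactness. The map $y \mapsto \tilde y$ is continuous, $\tilde y \mapsto s(f,\tilde y)$ is continuous by the preceding proposition, and the hyperbolic translation $y \mapsto \tau_{-\tilde y}$ depends continuously on $\tilde y$ in $\mathcal H$; hence $y \mapsto F_R(y)$ is continuous from $S^{N-1}_{\mathbb{B}^N}$ into $\Sigma_+$. Since $E_{\lambda,a,f}\in C^1(\Sigma_+,\mathbb R)$ by Proposition~\ref{Relcrit_I&E}(i), the composition $h(y) := E_{\lambda,a,f}(F_R(y))$ is continuous on the compact set $S^{N-1}_{\mathbb{B}^N}$, and therefore attains its maximum at some $y_\star$. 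Setting
\begin{equation*}
\varepsilon_0 \;:=\; \bar{\mathcal I}_{\lambda,a,f}(\mathcal{V}_{a,f}) + \mathcal I_{\lambda,1,0}(\mathcal U) - h(y_\star) \;>\; 0
\end{equation*}
yields $E_{\lambda,a,f}(F_R(y)) \le \bar{\mathcal I}_{\lambda,a,f}(\mathcal{V}_{a,f}) + \mathcal I_{\lambda,1,0}(\mathcal U) - \varepsilon_0$ for every $y \in S^{N-1}_{\mathbb{B}^N}$, which is the desired inclusion.

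The only delicate point is ensuring the pointwise inequality is genuinely strict, which is precisely what Proposition~\ref{enerest} delivers; the rest is a routine compactness-continuity bootstrap. One should also verify that $F_R(y)$ genuinely lies in $\Sigma_+$ (i.e.\ that the positive part does not vanish), but this is immediate because $\mathcal{V}_{a,f}\ge 0$ and $s(f,\tilde y)\,\mathcal U(\tau_{-\tilde y}(\cdot))>0$, so $u_y>0$ and hence $F_R(y) > 0$ in $\mathbb{B}^N$.
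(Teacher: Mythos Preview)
Your proof is correct and follows essentially the same approach as the paper: the paper simply asserts that the pointwise strict inequality follows ``from the construction of $F_R$'' and then invokes compactness of $F_R(S^{N-1}_{\mathbb{B}^N})$. You have merely unpacked these two sentences by making explicit the use of \eqref{4.ll}, Proposition~\ref{enerest}, and the continuity of $y\mapsto E_{\lambda,a,f}(F_R(y))$ on the compact sphere.
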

\begin{proof}
	The following statement results from the construction of  $F_{R}$
	\begin{equation*}
		F_{R}\left(S^{N-1}_{\mathbb{B}^{N}}\right) \subseteq\left[E_{\la,a, f}<\bar{\mathcal I}_{\lambda,a,f}\left(\mathcal{V}_{a, f}\right)+\mathcal I_{\la,1,0}(\mathcal U)\right].
	\end{equation*}
	As a result, the proposition follows since $F\left(S^{N-1}_{\mathbb{B}^{N}}\right)$ is compact.
\end{proof} 
\noi
Consequently, we construct a mapping
\begin{equation*}
	F_{R}: S^{N-1}_{\mathbb{B}^{N}} \rightarrow\left[E_{\la,a, f} \leq \bar{\mathcal I}_{\lambda,a,f}\left(\mathcal{V}_{a, f}\right)+\mathcal I_{\la,1,0}(\mathcal U)-\varepsilon_{0}(R)\right].
\end{equation*}
The following lemma is vital in establishing the mapping $G$.
\begin{lemma}
	There exists $d_{4} \in\left(0, d_{3}\right]$ such that if $\|f\|_{\mathcal{H'}} \leq d_{4}$, then
	\begin{equation}
		\left[E_{\la,a, f}<\bar{\mathcal I}_{\lambda,a,f}\left(\mathcal{V}_{a, f}\right)+\mathcal I_{\la,1,0}(\mathcal U)\right] \subseteq\left[E_{\la,a, 0}<\mathcal I_{\la,1,0}(\mathcal U)+\delta_{0}\right] \label{4.vv}
	\end{equation}
	where $\delta_{0}>0$ is as obtained in Lemma \ref{centofm}.
	\label{lemG}
\end{lemma}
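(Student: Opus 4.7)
The plan is to push a containment of sublevel sets by leveraging the two-sided control of $E_{\lambda,a,f}$ by $E_{\lambda,a,0}$ given in Lemma~\ref{Lem_reIE}(ii), together with the fact that the local minimizer value $\bar{\mathcal I}_{\lambda,a,f}(\mathcal{V}_{a,f})$ is nonpositive and goes to zero as $\|f\|_{\mathcal H'}\to 0$. Fix any $v\in\Sigma_{+}$ with $E_{\lambda,a,f}(v)<\bar{\mathcal I}_{\lambda,a,f}(\mathcal{V}_{a,f})+\mathcal I_{\lambda,1,0}(\mathcal U)$. By the left-hand inequality in Lemma~\ref{Lem_reIE}(ii), for every $\varepsilon\in(0,1)$,
\[
(1-\varepsilon)^{N/2}E_{\lambda,a,0}(v)\le E_{\lambda,a,f}(v)+\tfrac{1}{2\varepsilon}\|f\|_{\mathcal H'}^{2}<\bar{\mathcal I}_{\lambda,a,f}(\mathcal{V}_{a,f})+\mathcal I_{\lambda,1,0}(\mathcal U)+\tfrac{1}{2\varepsilon}\|f\|_{\mathcal H'}^{2}.
\]
Since by \eqref{1st sol energy} we have $\bar{\mathcal I}_{\lambda,a,f}(\mathcal{V}_{a,f})\le 0$, this gives
\[
E_{\lambda,a,0}(v)<(1-\varepsilon)^{-N/2}\Big[\mathcal I_{\lambda,1,0}(\mathcal U)+\tfrac{1}{2\varepsilon}\|f\|_{\mathcal H'}^{2}\Big].
\]

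Next, I choose $\varepsilon=\varepsilon(\delta_{0})\in(0,1)$ small enough that
\[
\big((1-\varepsilon)^{-N/2}-1\big)\mathcal I_{\lambda,1,0}(\mathcal U)\le \tfrac{\delta_{0}}{2},
\]
which is possible as the left-hand side tends to $0$ as $\varepsilon\to 0^{+}$. With this $\varepsilon$ fixed, I then pick $d_{4}\in(0,d_{3}]$ small enough so that
\[
(1-\varepsilon)^{-N/2}\,\tfrac{d_{4}^{2}}{2\varepsilon}\le \tfrac{\delta_{0}}{2}.
\]
For any $f$ with $\|f\|_{\mathcal H'}\le d_{4}$, combining these two choices yields
\[
E_{\lambda,a,0}(v)<\mathcal I_{\lambda,1,0}(\mathcal U)+\tfrac{\delta_{0}}{2}+\tfrac{\delta_{0}}{2}=\mathcal I_{\lambda,1,0}(\mathcal U)+\delta_{0},
\]
which is exactly the required inclusion \eqref{4.vv}.

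There is no genuine obstacle here: the inclusion is a routine consequence of Lemma~\ref{Lem_reIE}(ii) once one observes that the local-minimizer value is $\le 0$. The only small point is being careful about the order of quantifiers—first selecting $\varepsilon$ depending on $\delta_{0}$ and $\mathcal I_{\lambda,1,0}(\mathcal U)$, and only then shrinking $d_{4}$ relative to that $\varepsilon$—so that the factor $(1-\varepsilon)^{-N/2}$ near $1$ absorbs the loss while the quadratic-in-$\|f\|_{\mathcal H'}$ error term stays controlled. Since $d_{2},d_{3}>0$ are already fixed from Propositions~\ref{Relcrit_I&E} and the previous $F_{R}$ construction, taking $d_{4}:=\min\{d_{3},\text{(bound above)}\}>0$ completes the proof.
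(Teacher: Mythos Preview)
Your argument is correct and follows the standard route: the paper defers to \cite[Lemma~7.9]{GGS}, whose proof proceeds exactly as you do, namely by invoking the lower bound in Lemma~\ref{Lem_reIE}(ii) together with $\bar{\mathcal I}_{\lambda,a,f}(\mathcal V_{a,f})\le 0$ from \eqref{1st sol energy}, then choosing $\varepsilon$ and $d_{4}$ in the right order.
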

\begin{proof}
The proof of the above lemma can be concluded as in \cite[Lemma~7.9]{GGS}.
\end{proof}

With all these technical results in hand, we now  define the function $G$ as follows: 
$$G:\left[E_{\la,a, f}<{\bar{\mathcal I}}_{\lambda,a,f}\left(\mathcal{V}_{a, f}\right)+\mathcal I_{\la,1,0}(\mathcal U)\right] \rightarrow S^{N-1}_{\mathbb{B}^{N}}$$
$$G(v):= \tanh(\frac{1}{2})\frac{\int_{\mathbb{B}^{N}} \frac{x}{m}|v|^{2^*} \mathrm{~d} V_{\mathbb{B}^{N}}(x)}{\left|\int_{\mathbb{B}^{N}} \frac{x}{m} |v|^{2^*} \mathrm{~d} V_{\mathbb{B}^{N}}(x)\right|}
$$
where $m$ is as defined in Lemma \ref{centofm}. Further, Lemma \ref{centofm} and Lemma \ref{lemG} ensure that the above function is well defined.
Furthermore, in the following proposition, we will show that these functions, $F$ and $G$, fulfil our purpose.
\begin{proposition}\label{propiden}
	For a sufficiently large $R \geq R_{1}$ and for sufficiently small $\|f\|_{\mathcal{H'}}>0$, we have,
	\begin{equation*}
		G \circ F_{R}: S^{N-1}_{\mathbb{B}^{N}} \rightarrow S^{N-1}_{\mathbb{B}^{N}}
	\end{equation*}
	is homotopic to the identity.
\end{proposition}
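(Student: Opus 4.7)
The plan is to prove that $G\circ F_R$ is uniformly close to the identity on $S^{N-1}_{\mathbb{B}^N}$ when $R$ is sufficiently large and $\|f\|_{\mathcal{H}'}$ sufficiently small; since $S^{N-1}_{\mathbb{B}^N}$ coincides with the Euclidean sphere of radius $\tanh(1/2)$, the required homotopy will then be produced by a straight-line deformation in the ambient Euclidean space followed by radial reprojection onto $S^{N-1}_{\mathbb{B}^N}$. Writing $y_R := \frac{\tanh(R/2)}{\tanh(1/2)}\,y$ (so that $d(y_R,0)=R$) and $u_{f,R,y}:=\mathcal{V}_{a,f}+s(f,y_R)\,\mathcal{U}\circ\tau_{-y_R}$, we have $F_R(y)=u_{f,R,y}/\|u_{f,R,y}\|_\lambda$, and since $G$ is invariant under positive rescaling of its vector argument,
\[
G(F_R(y)) \;=\; \tanh(1/2)\,\frac{I(f,R,y)}{|I(f,R,y)|}, \qquad I(f,R,y):=\int_{\mathbb{B}^N}\frac{x}{m(x)}\,|u_{f,R,y}|^{2^*}\dvg.
\]
Thus it suffices to show that $I(f,R,y)$ points in a direction converging to $y/|y|$ uniformly in $y\in S^{N-1}_{\mathbb{B}^N}$ as $R\to\infty$ and $\|f\|_{\mathcal{H}'}\to 0$.

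The change of variables $x=\tau_{y_R}(z)$ together with Lemma~\ref{lemma1}(iii) and $\tau_{-y_R}\circ\tau_{y_R}=\mathrm{Id}$ gives
\[
I(f,R,y) \;=\; \int_{\mathbb{B}^N}\frac{\tau_{y_R}(z)}{m(\tau_{y_R}(z))}\,\bigl|\mathcal{V}_{a,f}(\tau_{y_R}(z))+s(f,y_R)\,\mathcal{U}(z)\bigr|^{2^*}\dvg(z).
\]
Expanding via the elementary bound $\bigl||a+b|^{2^*}-|b|^{2^*}\bigr|\leq C(|a|^{2^*}+|a|\,|b|^{2^*-1})$, applying H\"older, and using $\|\mathcal{V}_{a,f}\|_{L^{2^*}}\leq C\|f\|_{\mathcal{H}'}$ (which follows from the strict convexity of $\bar{\mathcal I}_{\lambda,a,f}$ on $B_{r_1}(0)$ in Proposition~\ref{Pro_Locmin}) controls the contribution of $\mathcal{V}_{a,f}$ by $O(\|f\|_{\mathcal{H}'})$. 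For the leading piece, since $|y_R|\to 1$ and $\tau_{y_R}$ moves each fixed $z$ into a bounded hyperbolic neighborhood of $y_R$, one has $\tau_{y_R}(z)/|\tau_{y_R}(z)|\to y/|y|$ Euclideanly, so that
\[
\frac{\tau_{y_R}(z)}{m(\tau_{y_R}(z))} \;=\; \tanh(1/4)\,\frac{\tau_{y_R}(z)}{|\tau_{y_R}(z)|} \;\longrightarrow\; \tanh(1/4)\,\frac{y}{|y|};
\]
combined with $s(f,y_R)\to 1$ (from the preceding proposition and the smallness of $\mathcal{V}_{a,f}$) and dominated convergence with envelope $\tanh(1/4)|\mathcal{U}|^{2^*}\in L^1$, this yields
\[
I(f,R,y) \;=\; \tanh(1/4)\,\frac{y}{|y|}\int_{\mathbb{B}^N}|\mathcal{U}|^{2^*}\dvg \;+\; o(1),
\]
with the $o(1)$ term uniform in $y\in S^{N-1}_{\mathbb{B}^N}$ by the compactness of the sphere and the continuous $y$-dependence of every quantity in the construction. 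Hence $G(F_R(y))\to y$ uniformly.

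Finally, set
\[
\eta(t,y) \;:=\; \tanh(1/2)\,\frac{(1-t)\,G(F_R(y)) + t\,y}{|(1-t)\,G(F_R(y)) + t\,y|}, \qquad (t,y)\in[0,1]\times S^{N-1}_{\mathbb{B}^N}.
\]
Taking $R$ large and $\|f\|_{\mathcal{H}'}$ small enough that $\langle G(F_R(y)),y\rangle>0$ uniformly on $S^{N-1}_{\mathbb{B}^N}$ (immediate from $G\circ F_R\to\mathrm{id}$ in the sense above), the denominator is bounded below away from $0$, so $\eta$ is a continuous map into $S^{N-1}_{\mathbb{B}^N}$ with $\eta(0,\cdot)=G\circ F_R$ and $\eta(1,y)=\tanh(1/2)\,y/|y|=y$, completing the homotopy. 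The main technical obstacle is the uniform concentration estimate above: it demands a uniform-in-$y$ control on the Euclidean rate at which $\tau_{y_R}(z)/|\tau_{y_R}(z)|$ approaches $y/|y|$ on the essential support of $|\mathcal{U}|^{2^*}$, which is most conveniently obtained from the explicit formula \eqref{hyperbolictranslation} for $\tau_{y_R}$; once this uniform rate is in hand, the remaining steps are routine.
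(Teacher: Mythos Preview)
Your proof is correct but follows a genuinely different route from the paper's. The paper constructs a \emph{piecewise} homotopy $\zeta(\theta,y)$ on $[0,1]\times S^{N-1}_{\mathbb{B}^N}$: on $[0,\tfrac12]$ it runs along a geodesic of the sphere from $G(F_R(y))$ to $G$ applied to the \emph{pure} normalized translate $\mathcal{U}\circ\tau_{-y_R}/\|\mathcal{U}\circ\tau_{-y_R}\|_\lambda$ (i.e.\ with $\mathcal{V}_{a,f}$ stripped off); on $[\tfrac12,1)$ it slides the translation radius from $R$ to infinity by applying $G$ to $\mathcal{U}\circ\tau_{-y_{R/(2\theta)}}$; and at $\theta=1$ it is declared to equal $y$. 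Continuity at the two junctions is then checked separately, the key computation at $\theta=1$ being the same center-of-mass limit
\[
\int_{\mathbb{B}^N}\Bigl[\tfrac{\tau_z(y)}{m(\tau_z(y))}-\tfrac{z}{m(z)}\Bigr]\,|\mathcal{U}(y)|^{2^*}\dvg = o(1)\quad (d(z,0)\to\infty)
\]
that you establish. Your approach instead absorbs both effects (the perturbation by $\mathcal{V}_{a,f}$ and the concentration of the bubble) into a single uniform estimate $G(F_R(y))\to y$, and then uses one explicit straight-line/reprojection homotopy. This is more direct and avoids the junction-continuity checks; the paper's route is more modular and keeps the ``remove the perturbation'' and ``send the bubble to infinity'' steps separate. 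Both arguments ultimately hinge on the same dominated-convergence computation for $\tau_{y_R}(z)/|\tau_{y_R}(z)|\to y/|y|$. One small remark: you do not actually need $s(f,y_R)\to 1$; boundedness of $s$ away from $0$ (which the preceding proposition gives) already suffices, since the factor $s^{2^*}$ cancels upon normalizing $I/|I|$.
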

\begin{proof}
Let $\gamma(\theta),\;\theta \in\left[\theta_1, \theta_2\right]$ be a regular geodesic joining $G\left(\frac{\mathcal U(\tau_{-\frac{\tanh (\frac{R}{2})}{\tanh\frac{1}{2}}  y}(\cdot))}{\left\|\mathcal U(\tau_{-\frac{\tanh (\frac{R}{2})}{\tanh\frac{1}{2}}  y}(\cdot))\right\|_\lambda}\right)$ and $G\left(F_R(y)\right)$ on $S^{N-1}_{\mathbb{B}^{N}}$, where $\gamma\left(\theta_1\right)= G\left(\frac{\mathcal U(\tau_{-\frac{\tanh (\frac{R}{2})}{\tanh\frac{1}{2}}  y}(\cdot))}{\left\|\mathcal U(\tau_{-\frac{\tanh (\frac{R}{2})}{\tanh\frac{1}{2}}  y}(\cdot))\right\|_\lambda}\right), \gamma\left(\theta_2\right)=G\left(F_R(y)\right)$.
Set
\begin{equation*}
\bar{\gamma}(\theta)=\gamma\left(2\left(\theta_1-\theta_2\right) \theta+\theta_2\right), \quad \theta \in\left[0, \frac{1}{2}\right] .
\end{equation*}
Define a mapping
\begin{equation*}
\zeta(\theta, y):[0,1] \times  S^{N-1}_{\mathbb{B}^{N}} \rightarrow  S^{N-1}_{\mathbb{B}^{N}}
\end{equation*}
by
\begin{equation*}
\zeta(\theta, y)= \begin{cases}\bar{\gamma}(\theta), & \theta \in\left[0, \frac{1}{2}\right) \\ 
G\left(\frac{\mathcal U(\tau_{-\frac{\tanh (\frac{R}{4 \theta})}{\tanh\frac{1}{2}}  y}(\cdot))}{\left\|\mathcal U(\tau_{-\frac{\tanh (\frac{R}{4 \theta})}{\tanh\frac{1}{2}}  y}(\cdot))\right\|_\lambda}\right) , & \theta \in\left[\frac{1}{2}, 1\right) \\ y, & \theta=1 .\end{cases}
\end{equation*}
From now on, we shall denote $\mathcal U(\tau_{-k}(x))$ by $\mathcal U_{k}(x)$.
First, we show that $\zeta(\theta, y)$ is well defined. Indeed for $d(k,0)$ large,
\begin{equation*}
E_{\lambda,a,0}\left(\frac{\mathcal U_{k}}{\left\|\mathcal U_{k}\right\|_\lambda}\right) \leq I_{\lambda,1,0}(\mathcal U)+\delta_0,
\end{equation*}
where $\delta_0$ is as in Lemma \ref{centofm}.
Therefore, by \eqref{4.vv}, $\zeta(\theta, y)$ is well defined.

\noi
Next, we prove that $\zeta(\theta, y) \in C\left([0,1] \times S^{N-1}_{\mathbb{B}^{N}}, S^{N-1}_{\mathbb{B}^{N}}\right)$. It can be easily seen that
\begin{equation*}
\lim _{\theta \rightarrow \frac{1}{2}} \bar{\gamma}(\theta)=\gamma\left(\theta_1\right)=G\left(\frac{\mathcal U(\tau_{-\frac{\tanh (\frac{R}{2})}{\tanh\frac{1}{2}}  y}(\cdot))}{\left\|\mathcal U(\tau_{-\frac{\tanh (\frac{R}{2})}{\tanh\frac{1}{2}}  y}(\cdot))\right\|_\lambda}\right)=\zeta\left(\frac{1}{2}, y\right).
\end{equation*}
Further, we claim that
\begin{equation*}
\lim _{\theta \rightarrow 1} G\left(\frac{\mathcal U(\tau_{-\frac{\tanh (\frac{R}{4 \theta})}{\tanh\frac{1}{2}}  y}(\cdot))}{\left\|\mathcal U(\tau_{-\frac{\tanh (\frac{R}{4 \theta})}{\tanh\frac{1}{2}}  y}(\cdot))\right\|_\lambda}\right)=y \;\;\;\text{for $R$ large.}
\end{equation*}
Observe that for any $z \in \mathbb{B}^{N}$ with $d(z,0)$ large, we have
\begin{align*}
&\int_{\mathbb{B}^{N}} \frac{x}{m(x)}\frac{|\mathcal U(x)|^{2^*}}{\left\|\mathcal U\right\|_\lambda^{2^*}}\mathrm{~d} V_{\mathbb{B}^{N}}(x)- \frac{z}{m(z) \left[\int_{\mathbb{B}^{N}}|\mathcal U|^{2^*}\right]^{\frac{2^*}{2}-1}}\\
&= \int_{\mathbb{B}^{N}} \frac{\tau_{z}(y)}{\mathcal U(\tau_{z}(y))}\frac{|\mathcal U(y)|^{2^*}}{\left[\int_{\mathbb{B}^{N}}|\mathcal U|^{2^*}\right]^{\frac{2^*}{2}}}\mathrm{~d} V_{\mathbb{B}^{N}}(y)- \int_{\mathbb{B}^{N}}\frac{z}{m(z)} \frac{|\mathcal U(y)|^{2^*}}{\left[\int_{\mathbb{B}^{N}}|\mathcal U|^{2^*}\right]^{\frac{2^*}{2}}}\mathrm{~d} V_{\mathbb{B}^{N}}(y)\\
&= \frac{1}{\left[\int_{\mathbb{B}^{N}}|\mathcal U|^{2^*}\right]^{\frac{2^*}{2}}}\int_{\mathbb{B}^{N}} \left[\frac{\tau_{z}(y)}{m(\tau_{z}(y))}- \frac{z}{m(z)}\right]|\mathcal U(y)|^{2^*}\mathrm{~d} V_{\mathbb{B}^{N}}(y)\\
& =\frac{\tanh \frac{1}{4}}{\left[\int_{\mathbb{B}^{N}}|\mathcal U|^{2^*}\right]^{\frac{2^*}{2}}}\int_{\mathbb{B}^{N}} \left[\frac{\tau_{z}(y) |z|- z |\tau_{z}(y)|}{|\tau_{z}(y)||z|}\right] |\mathcal U(y)|^{2^*}\mathrm{~d} V_{\mathbb{B}^{N}}(y)
= o(1) \;\;\;\text{for $d(z,0)$ large,}
\end{align*}
where for the last step, we have used the dominated convergence theorem for $|z| \rightarrow 1.$ 

\medskip

Setting $\mathcal U_{R,\theta}(x)= \mathcal U(\tau_{-\frac{\tanh (\frac{R}{4 \theta})}{\tanh\frac{1}{2}}  y}(x))$ for $y \in S^{N-1}_{\mathbb{B}^{N}} $ and $R$ large.
Thus using the above arguments, we obtain
\begin{align*}
\lim _{\theta \rightarrow 1} G\left(\frac{\mathcal U_{R,\theta}(\cdot)}{\left\|\mathcal U_{R,\theta}(\cdot)\right\|_\lambda}\right)=&\lim _{\theta \rightarrow 1} \tanh(\frac{1}{2}) \frac{\int_{\mathbb{B}^{N}} \frac{x}{m(x)}\frac{\left|\mathcal U_{R, \theta}(x)\right|^{2^*}}{\left\|\mathcal U\right\|_\lambda^{2^*}}\mathrm{~d} V_{\mathbb{B}^{N}}(x)}{\left|\int_{\mathbb{B}^{N}} \frac{x}{m(x)}\frac{\left|\mathcal U_{R, \theta}(x)\right|^{2^*}}{\left\|\mathcal U\right\|_\lambda^{2^*}}\mathrm{~d} V_{\mathbb{B}^{N}}(x)\right|}\\
&= \lim _{\theta \rightarrow 1} \tanh(\frac{1}{2})\frac{\left(\frac{\frac{\tanh (\frac{R}{4 \theta})}{\tanh\frac{1}{2}}y}{\left|\frac{\tanh (\frac{R}{4 \theta})}{\tanh\frac{1}{2}} y\right| \left[\int_{\mathbb{B}^{N}}|\mathcal U|^{2^*}\right]^{\frac{2^*}{2}-1}}\right)}{\left|\frac{\frac{\tanh (\frac{R}{4 \theta})}{\tanh\frac{1}{2}}y}{\left|\frac{\tanh (\frac{R}{4 \theta})}{\tanh\frac{1}{2}} y\right| \left[\int_{\mathbb{B}^{N}}|\mathcal U|^{2^*}\right]^{\frac{2^*}{2}-1}}\right|}=y.
\end{align*}
Hence the claim holds and, therefore, the proposition follows.
\end{proof}

We are now in a situation to establish our main results:
\begin{proposition}
	For sufficiently large $R \geq R_{1}$,
	\begin{equation*}
		\text { cat }\left(\left[E_{\la,a, f}<\bar{\mathcal I}_{\lambda,a,f}\left(\mathcal{V}_{a, f}\right)+\mathcal I_{\la,1,0}(\mathcal{U})-\varepsilon_{0}(R)\right]\right) \geq 2.
	\end{equation*}
	\label{propcat}
\end{proposition}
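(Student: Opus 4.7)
The plan is to apply Lemma \ref{lemcat} with $M$ taken to be the sublevel set $\left[E_{\la,a,f} < \bar{\mathcal I}_{\la,a,f}(\mathcal{V}_{a,f}) + \mathcal I_{\la,1,0}(\mathcal U) - \varepsilon_0(R)\right]$, and the two required continuous maps provided by $F_R$ and $G$ constructed above. By the preceding proposition, for $\|f\|_{\mathcal H'} \leq d_3$ and $R \geq R_1$, one has
\[
F_R\bigl(S^{N-1}_{\bn}\bigr) \subseteq \bigl[E_{\la,a,f} \leq \bar{\mathcal I}_{\la,a,f}(\mathcal V_{a,f}) + \mathcal I_{\la,1,0}(\mathcal U) - \varepsilon_0(R)\bigr] \subseteq M,
\]
so $F_R$ is a continuous map $S^{N-1}_{\bn} \to M$; continuity in $y$ is inherited from the continuity of $y \mapsto s(f, y)$ and the continuity of the hyperbolic translation family.

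Next, I would verify that $G$ restricts to a continuous map $M \to S^{N-1}_{\bn}$. Since $M$ is contained in $\left[E_{\la,a,f} < \bar{\mathcal I}_{\la,a,f}(\mathcal V_{a,f}) + \mathcal I_{\la,1,0}(\mathcal U)\right]$, Lemma \ref{lemG} places $M$ inside $\left[E_{\la,a,0} < \mathcal I_{\la,1,0}(\mathcal U) + \delta_0\right]$ provided $\|f\|_{\mathcal H'} \leq d_4$; Lemma \ref{centofm} then guarantees that the vector-valued integral $\int_{\bn} \frac{x}{m(x)}|v|^{2^*}\, dV_{\bn}(x)$ is nonzero for every $v$ in this enlarged set, so $G$ is well defined and continuous on $M$ by continuity of the $L^{2^*}$ trace of the integrand.

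Finally, Proposition \ref{propiden} establishes that $G \circ F_R$ is homotopic to the identity on $S^{N-1}_{\bn}$, provided $R$ is sufficiently large and $\|f\|_{\mathcal H'}$ is sufficiently small. Choosing $\|f\|_{\mathcal H'} \leq \min\{d_3, d_4\}$ together with $R$ large enough ensures all the above hypotheses hold simultaneously, after which Lemma \ref{lemcat} yields $\mathrm{cat}(M) \geq 2$. No serious obstacle remains at this stage: the substantive analytic work---producing a non-vanishing center-of-mass direction on the sublevel set via Lemma \ref{centofm}, confining the sublevel set inside the domain where $G$ is defined via Lemma \ref{lemG}, and the explicit homotopy construction in Proposition \ref{propiden}---has already been carried out. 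The only bookkeeping needed is to track the compatibility of the smallness thresholds on $\|f\|_{\mathcal H'}$ and the largeness threshold on $R$ so that every cited lemma applies simultaneously.
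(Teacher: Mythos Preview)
Your proposal is correct and follows exactly the same approach as the paper: the paper's proof is the single sentence ``Lemma~\ref{lemcat}, together with Proposition~\ref{propiden}, proves the proposition,'' and you have simply unpacked that citation in detail. One trivial bookkeeping slip: you write $F_R(S^{N-1}_{\bn})\subseteq[E_{\la,a,f}\le c-\varepsilon_0(R)]\subseteq M$ with $M=[E_{\la,a,f}<c-\varepsilon_0(R)]$, which reverses a non-strict and strict inclusion; this is harmless (replace $\varepsilon_0$ by $\varepsilon_0/2$ in the definition of $M$, or note that compactness of $F_R(S^{N-1}_{\bn})$ already gives a strict gap) and is the same looseness present in the paper itself.
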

\begin{proof}
	Lemma \ref{lemcat}, together with Proposition \ref{propiden}, proves the proposition.
\end{proof}
\noi
The above-stated proposition leads us to the following multiplicity result.
\begin{theorem}
	Let the assumptions on potential $a(.)$ be as mentioned in Theorem \ref{Thm-A2}. Then there exists $d_{5}>0$ such that if $\|f\|_{\mathcal{H'}} \leq d_{5},\; f \geq 0,\; f \not\equiv 0$, then $E_{\la,a, f}(v)$ has at least two critical points in
	\begin{equation*}
		\left[E_{\la,a, f}<\bar{\mathcal I}_{\lambda,a,f}\left(\mathcal{V}_{a, f}\right)+\mathcal I_{\la,1,0}(\mathcal{U})\right].
	\end{equation*}
	\label{crit2,3}
\end{theorem}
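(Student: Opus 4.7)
The plan is to apply the Lusternik-Schnirelmann principle (Proposition \ref{multi-cat}) to the functional $E_{\lambda,a,f}$ on the Hilbert manifold $\Sigma_+$, with the cutoff level
$$c_0 := \bar{\mathcal I}_{\lambda,a,f}(\mathcal V_{a,f}) + \mathcal I_{\lambda,1,0}(\mathcal U) - \varepsilon_0(R),$$
for $R \geq R_1$ fixed large enough so that Proposition \ref{propcat} applies. First I would set $d_5 := \min\{d_1,d_2,d_3,d_4\}$ (the constants produced in Propositions \ref{Pro_Locmin}, \ref{Relcrit_I&E}, the construction of $F_R$, and Lemma \ref{lemG}), so that every preceding construction is simultaneously valid under the assumption $\|f\|_{\mathcal H'} \leq d_5$. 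Under these conditions $\Sigma_+$ is an open $C^1$ submanifold of the unit sphere in $\mathcal H$ and, by Proposition \ref{Relcrit_I&E}(i), $E_{\lambda,a,f} \in C^1(\Sigma_+, \mathbb R)$.

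Next, I would verify hypothesis (1) of Proposition \ref{multi-cat}. Since $c_0 < \bar{\mathcal I}_{\lambda,a,f}(\mathcal V_{a,f}) + \mathcal I_{\lambda,1,0}(\mathcal U)$, Corollary \ref{cor1.8} gives that $E_{\lambda,a,f}$ satisfies $(PS)_c$ at every level $c \leq c_0$. A delicate point is that $\Sigma_+$ has a boundary $\partial\Sigma_+ = \{v \in \Sigma : v_+ \equiv 0\}$ along which the manifold structure degenerates and where PS sequences could in principle accumulate. This is however ruled out by Proposition \ref{ProPS_ELaf}(1), which says $E_{\lambda,a,f}(v) \to \infty$ as $\operatorname{dist}_{\mathcal H}(v,\partial\Sigma_+) \to 0$; thus any sequence with $E_{\lambda,a,f}(v_n) \leq c_0 + 1$ is automatically trapped in a closed subset of $\Sigma_+$ bounded away from $\partial\Sigma_+$, and the abstract LS machinery applies.

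For hypothesis (2) of Proposition \ref{multi-cat}, Proposition \ref{propcat} directly yields $\mathrm{cat}([E_{\lambda,a,f} \leq c_0]) \geq 2$ for $R$ sufficiently large, using the maps $F_R$ and $G$ built in the previous subsection together with the homotopy of Proposition \ref{propiden}. Invoking Proposition \ref{multi-cat}, we obtain at least two critical points of $E_{\lambda,a,f}$ inside $[E_{\lambda,a,f} \leq c_0]$, and since $c_0 < \bar{\mathcal I}_{\lambda,a,f}(\mathcal V_{a,f}) + \mathcal I_{\lambda,1,0}(\mathcal U)$, both lie in the open sublevel set asserted in the theorem.

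The heavy lifting has already been done in building $F_R, G$ and in Proposition \ref{propiden}, so the main obstacle in this final step is essentially bookkeeping: one must choose $d_5$ so that all preparatory constructions coexist, and justify that $(PS)_c$ on the noncomplete manifold $\Sigma_+$ is really equivalent to the usual PS in $\mathcal H$ via the correspondence of Proposition \ref{Relcrit_I&E}(ii). Once this is in place, combining the two new critical points of $E_{\lambda,a,f}$ (lifted via $v \mapsto t_{a,f}(v)v$ to critical points of $\bar{\mathcal I}_{\lambda,a,f}$ by Proposition \ref{Relcrit_I&E}(iii)) with the local minimizer $\mathcal V_{a,f}$ of Proposition \ref{Pro_Locmin} gives three distinct critical points of $\bar{\mathcal I}_{\lambda,a,f}$ — distinctness of the lifted pair from $\mathcal V_{a,f}$ follows from the strict energy ordering $\bar{\mathcal I}_{\lambda,a,f}(\mathcal V_{a,f}) \leq 0 < \inf_{\Sigma_+} E_{\lambda,a,f}$ granted by Lemma \ref{Lem_reIE}(iii) and \eqref{1st sol energy} — and Remark \ref{rmk1.1} promotes them to three positive solutions of \eqref{Paf}, concluding Theorem \ref{Thm-A2}.
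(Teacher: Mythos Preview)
Your proposal is correct and follows essentially the same approach as the paper: verify $(PS)_c$ below the critical level via Corollary~\ref{cor1.8}, invoke Proposition~\ref{propcat} for the category bound, and conclude by Proposition~\ref{multi-cat}. You are more explicit than the paper about the boundary issue on $\Sigma_+$ and the bookkeeping of constants, and your final paragraph already anticipates the subsequent step of lifting to critical points of $\bar{\mathcal I}_{\lambda,a,f}$ and distinguishing them from $\mathcal V_{a,f}$, which the paper carries out separately in the conclusion of Theorem~\ref{Thm-A2}.
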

\begin{proof}
From Corollary~\ref{cor1.8}, we know $(PS)_c$ is satisfied for $E_{\la,a,f}$ when $c<\bar{\mathcal I}_{\lambda,a,f}\left(\mathcal{V}_{a, f}\right)+\mathcal I_{\la,1,0}(\mathcal{U})$. Hence, the theorem follows from Proposition \ref{propcat}, and Proposition \ref{propcrit}.
\end{proof}
\medskip
{\bf {Proof of Theorem~\ref{Thm-A2} concluded:}}
\begin{proof}
We set the first positive solution as $u_1=\mathcal{V}_{a, f}$ which has been found in Proposition~\ref{Pro_Locmin} and from \eqref{1st sol energy}, we have 
$$\bar I_{\la,a,f}(\mathcal{V}_{a,f})<0.$$ 
From Theorem~\ref{crit2,3}, $E_{\la,a, f}$ has at least two critical points $v_2, v_3$ in \\
 $\left[E_{\la,a, f}<\bar{\mathcal I}_{\lambda,a,f}\left(\mathcal{V}_{a, f}\right)+\mathcal I_{\la,1,0}(\mathcal{U})\right].$ Using Proposition~\ref{Relcrit_I&E}(iii), $u_2=t_{a,f}(v_2)v_2$ and $u_3=t_{a,f}(v_3)v_3$  are the second and third positive solutions of \eqref{Paf}. Further, by Lemma~\ref{Lem_reIE}(iii), 
$0<E_{\la,a,f}(v_i)=\bar{\mathcal I}_{\lambda,a,f}(u_i), \quad i=1,2.$ Hence,
$$\bar{\mathcal I}_{\lambda,a,f}(u_1)<0<\bar{\mathcal I}_{\lambda,a,f}(u_i)<\bar{\mathcal I}_{\lambda,a,f}(u_1)+\mathcal I_{\la,1,0}(\mathcal{U}), \,i=2,3.$$
Hence, $u_1, u_2, u_3$ are distinct and \eqref{Paf} has at least three distinct solutions.
\end{proof}

 \section{Appendix}  \label{appen}
 In the appendix, we shall derive the necessary conditions in connection with Theorem~\ref{ps_decom}.
 \begin{lemma} \label{lemTn} Let us assume all the hypotheses and notations as in Theorem \ref{ps_decom}. Then
 	 \begin{enumerate}
 		\item $T^j_n \circ T^{-i}_n(0)\to \I $ for $i\ne j$.
 		\item $\Big| \log\big(\frac{\eps^k_{n} }{\eps^l_{n}}\big)\Big|+ \Big| \frac{y_n^k-y^l_n}{\eps^l_{n}}\Big|\to \I$ as $n\to \I$ for $k\ne l$.
 	\end{enumerate}
 \end{lemma}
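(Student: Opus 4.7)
My plan is to prove both separation conditions by contradiction, leveraging the iterative structure of the extraction procedure in the proof of Theorem~\ref{ps_decom}: at each step the residual PS sequence is, by construction, weakly vanishing under every transformation used in a prior step (a byproduct of the Brezis--Lieb-type subtraction), while having a nontrivial weak limit only under the newly chosen transformation. Any violation of a separation condition would make these two requirements incompatible on a common test function.

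For part~(1), assume toward contradiction that $d(T_n^j \circ (T_n^i)^{-1}(0), 0)$ stays bounded along a subsequence for some $i < j$. Since $\mathrm{Isom}(\bn) = O(N) \ltimes \bn$ has compact isotropy at the origin, $T_n^j \circ (T_n^i)^{-1}$ admits a subsequential limit $T_\infty$, and consequently $T_n^i \circ (T_n^j)^{-1} \to T_\infty^{-1}$. By the construction in Step~5 of the proof of Theorem~\ref{ps_decom}, the residual $R_n^{(i)}$ immediately after the $i$-th extraction satisfies $R_n^{(i)} \circ (T_n^i)^{-1} \rightharpoonup 0$ in $\mathcal H$; composing with the convergent isometry $T_n^i \circ (T_n^j)^{-1}$ and using Lemma~\ref{lemma1} then gives
\[
R_n^{(i)} \circ (T_n^j)^{-1} \;=\; \bigl(R_n^{(i)} \circ (T_n^i)^{-1}\bigr) \circ \bigl(T_n^i \circ (T_n^j)^{-1}\bigr) \;\rightharpoonup\; 0.
\]
This conclusion transfers to the pre-$j$-th residual $R_n^{(j-1)} = R_n^{(i)} - \sum_{i<k<j} u_n^k - \sum_\ell v_n^\ell$ provided each intermediate piece vanishes weakly under $(T_n^j)^{-1}$: the hyperbolic pieces $\mathcal U^k \circ T_n^k \circ (T_n^j)^{-1}$ vanish whenever $T_n^k \circ (T_n^j)^{-1}(0) \to \infty$ by the exponential decay \eqref{bubble decay} of $\mathcal U^k$, and the AT-bubble pieces vanish by a conformal-change computation since their localized Euclidean supports shrink relative to the hyperbolic translation $(T_n^j)^{-1}$. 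The resulting $R_n^{(j-1)} \circ (T_n^j)^{-1} \rightharpoonup 0$ contradicts the extraction property $R_n^{(j-1)} \circ (T_n^j)^{-1} \rightharpoonup v^{(j)} \neq 0$.

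For part~(2), work in the Euclidean conformal picture $\tilde v_n^k = \bigl(\tfrac{2}{1-|x|^2}\bigr)^{(N-2)/2}\bar v_n^k$ (cf.~\eqref{con-ps}), so the AT bubbles become the scale--translation bubbles of the form \eqref{wnk}. Suppose, for some $k\neq l$, both $|\log(\eps_n^k/\eps_n^l)|$ and $|(y_n^k-y_n^l)/\eps_n^l|$ stay bounded along a subsequence, and pass to further subsequences with $\eps_n^k/\eps_n^l \to \mu\in(0,\infty)$ and $(y_n^k-y_n^l)/\eps_n^l \to \xi\in\mathbb R^N$. Under the Euclidean rescaling $y=(x-y_n^l)/\eps_n^l$, the $k$-th rescaled bubble converges strongly in $D^{1,2}(\mathbb R^N)$ to the fixed nonzero function $\mu^{(N-2)/2}W^k(\mu y-\xi)$, while the $l$-th converges to $W^l$. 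Hence the $l$-rescaled residual $\tilde R_n^{(l-1)}$, which by construction weakly converges to $W^l$ alone, would inherit the extra nonzero contribution $-\mu^{(N-2)/2}W^k(\mu y - \xi)$, giving a contradiction. This is precisely the computation carried out for the cross term $(i_5)$ in Step~3 of the proof of Theorem~\ref{ps_decom}, now read in the contrapositive direction. The main obstacle in both parts is the potential simultaneous failure of the separation condition for several pairs sharing an index; I plan to resolve this by strong induction on the bubble index (on $j$ in part~(1) and on $l$ in part~(2)), first establishing the separation among all pairs of strictly smaller index so that the inductive hypothesis guarantees weak vanishing of each previously extracted bubble under the action of the new transformation, reducing every step of the contradiction to the single pair under consideration.
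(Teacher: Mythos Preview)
Your approach is essentially the same contradiction strategy as the paper's: a failure of separation forces the residual to be weakly null under the \emph{new} transformation, contradicting the nontriviality of the newly extracted profile. Two points deserve comment.

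For part~(1), the paper organizes the induction more economically. It inducts on the gap $j-i$ and carries in the hypothesis not only the separation $T_n^{-j}T_n^i(0)\to\infty$ but also the auxiliary statement $z_n^j\circ T_n^i\rightharpoonup 0$. Because $z_n^j = z_n^{j-1} - v^{j-1}\circ T_n^{-(j-1)}$, each inductive step peels off a \emph{single} bubble, and its weak vanishing under $T_n^i$ follows directly from the hypothesis for the pair $(i,j-1)$. In your scheme you peel off all intermediate bubbles at once and need $T_n^k\circ (T_n^j)^{-1}(0)\to\infty$ for every $i<k<j$; since these pairs involve the current index $j$, strong induction on $j$ does not literally supply them. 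The missing link is short but you should make it explicit: under the contradiction hypothesis $T_n^i\circ(T_n^j)^{-1}\to T_\infty^{-1}$, the isometry property gives
\[
d\bigl(T_n^k (T_n^j)^{-1}(0),0\bigr)\;\ge\; d\bigl(T_n^k (T_n^i)^{-1}(0),0\bigr)\,-\,d\bigl(0,\,T_n^i(T_n^j)^{-1}(0)\bigr)\;\longrightarrow\;\infty,
\]
the first term diverging by the inductive hypothesis for the pair $(i,k)$ (whose larger index is $k<j$) and the second staying bounded.

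For part~(2), the paper does not argue directly: it invokes the dislocation-space formalism of Tintarev--Fieseler and Palatucci--Pisante, verifying that the scale--translation group acts as a dislocation group on $D^{1,2}(\mathbb R^N)$, whence mutual orthogonality of the extracted scales is automatic. Your contrapositive argument is valid and more self-contained, but the same inductive care as in part~(1) is needed, and your last sentence is slightly misphrased: if $k<l$ the $k$-th bubble has already been subtracted before step $l$, so the contradiction does not come from an ``extra contribution'' appearing in $\tilde R_n^{(l-1)}$; rather, the equivalence of the $k$- and $l$-rescalings (under the contradiction assumption) forces the $l$-rescaled $\tilde R_n^{(l-1)}$ to inherit the weak-null behaviour of the $k$-rescaled post-$k$ residual, contradicting $\tilde R_n^{(l-1)}\rightharpoonup W^l\neq 0$.
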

 \begin{proof}

 	\textbf{Proof of (1):} We know that $\bar{u}_n\rightharpoonup 0$. If $\bar{u}_n\not\to 0$  strongly, then there exists $(T^1_n)$ such that $v^1_n:=\bar{u}_n\circ T^1_n\rightharpoonup v^1$. Since we are interested in the Hyperbolic bubble case, we assume that $v^1\not\equiv 0$. Hence both $T^1_n(0),T^{-1}_n(0)\to \I$ as $n\to \I$ where $T^{-1}_n= (T^{1}_n)^{-1}$. Finally, we found another PS- sequence $(z^2_n)_n$ for $I_{\lambda,a,0}$, where $z^2_n(x):=\bar{u}_n(x)- v^1(T_n^{-1}x)$(see Claim II). For notational purposes, define $z^1_n:= \bar{u}_{n}$. \\
 	
 	Note that $z^2_n\rightharpoonup 0$ and $z^2_n(T_n^1)\rightharpoonup 0$. Now again, if $z^2_n$ does not go to zero strongly, there exists $(T^2_n)$ such that $v^2_n:=z^2_n\circ T^2_n\rightharpoonup v^2$. Assume $v^2\not\equiv 0$ (if $v^2 \equiv 0,$ then we land in the Aubin Talenti profile). Thus both $T^2_n(0),T^{-2}_n(0)\to \I$ as $n\to \I$. Further, we can find another PS- sequence $(z^3_n)_n$ for $I_{\lambda,a,0}$, where $z^3_n(x):=z^2_n(x)- v^2(T_n^{-2}x)$(see Claim II). Moreover, observe that $z^3_n\rightharpoonup 0$ and $z^3_n(T_n^2)\rightharpoonup 0$. Also, if we have $T_n^{-2} T_n^1(0) \to \infty$, then $z^3_n(T_n^1)\rightharpoonup 0$.\\\\
 Now we use the method of induction as follows to get our required claim.
 Firstly, observe the following things from the above discussion:
 \begin{enumerate}
 \item $T^{-j}_n:= (T^{j}_n)^{-1},$ and $T^{-j}_n(0) \to \I \;\forall j$.
\item $z^j_n(x):=z^{j-1}_n(x)- v^{j-1}(T_n^{-(j-1)}x)$ for $j>1$ where $v^{j-1}=$ weak limit of $z^{j-1}_n\circ T^{j-1}_n$, and $z^1_n= \bar{u}_{n}$.
\item $z^j_n \rightharpoonup 0 \; \forall j.$
\item $v^{j-1} \not\equiv 0 $ and 
$z^j_n\circ T^{j-1}_n\rightharpoonup 0 \; \forall j>1$.
 \end{enumerate}
 For the following iteration scheme, we assume $i<j.$ \\
  \textbf{Induction step 1}: We prove our claim for $j=i+1$.\\
 For this, as we know from the above points that $z^j_n\circ T^j_n\rightharpoonup v^j \nequiv 0$ and $z^j_n\circ T^i_n\rightharpoonup 0$, then for any $\phi \in C_{c}^{\infty}\left(\mathbb{B}^{N}\right)$, we have 
 	\begin{align} \label{Orth}
		0 & \neq \left\langle v^j,\phi \right\rangle_{\lambda} \\ \notag 
  & =\lim _{n \rightarrow \infty}\left\langle z^j_n\circ T^j_n,\phi \right\rangle_{\lambda} \\ \notag
&= \lim _{n \rightarrow \infty}\left\langle z^j_n,\phi \circ T^{-j}_n \right\rangle_{\lambda}\\ \notag
& = \lim _{n \rightarrow \infty}\left\langle z^j_n \circ T^i_n,\phi \circ \left(T^{-j}_n T^i_n \right) \right\rangle_{\lambda}.
\end{align}
 Now if $T^{-j}_n \circ T^i_n(0) \nrightarrow \infty$, then there exists $R>0$ such that $Supp\left(\phi \circ \left(T^{-j}_n T^i_n \right)\right) \subseteq B_{R}(0)$. Then using Vitali's convergence Theorem and the fact that $z^j_n \circ T^i_n \rightharpoonup 0$ in the last step of the above calculations, we get a contradiction.\\\\
 \textbf{Induction assumption:} Let the following hold true for $j=i+(k-1) \; \forall\; (k-1)>0$:
 \begin{enumerate}
     \item $z^j_n\circ T^i_n\rightharpoonup 0$.
     \item $T^{-j}_n \circ T^i_n(0) \rightarrow \infty$.
 \end{enumerate}
 \textbf{Final step:} In this step, we prove that $T^{-j}_n \circ T^i_n(0) \rightarrow \infty$ for $j=i+k \; \forall \; k>0$. \\
 Firstly, observe that $(j-1)-i=k-1$, then consider
 \begin{equation*}
     \begin{aligned}
         z^j_n &=z^{j-1}_n- v^{j-1}(T_n^{-(j-1)}),\\
         z^j_n(T_n^i)&=z^{j-1}_n (T_n^i)- v^{j-1}(T_n^{-(j-1)}T_n^i).\\
     \end{aligned}
 \end{equation*}
 Then using the induction assumptions, it follows that $z^j_n\circ T^i_n\rightharpoonup 0$ for $j=i+k.$ Now the same steps as in \eqref{Orth}, we get the desired result.
 
 \medskip
 \noi
 		\textbf{Proof of (2):} The complete proof can be found in \cite{MR3216834}. For the sake of completeness, we sketch the proof here. \\
   Let $W$ and $v$ be any Schwartz class functions in $D^{1,2}(\mathbb{R}^N)$. 
 		\begin{equation}\label{dis_wl_0}
 			D_{\epsilon_n^k,y^k_n}W:=(\eps_n^k)^{\frac{2-N}{2}}W\(\f{x-y^k_n}{\eps_n^k}\)\rightharpoonup 0  \equiv   \langle D_{\epsilon_n^k,y^k_n}W ,v\rangle =\langle (\eps_n^k)^{\frac{2-N}{2}}W\(\f{x-y^k_n}{\eps_n^k}\),v\rangle\to 0 .
 		\end{equation}
   Consider
 		\begin{align*}
 			\langle (\eps_n^k)^{\frac{2-N}{2}}W\(\f{x-y^k_n}{\eps_n^k}\),v\rangle& = \int_{ \mathbb R^N} \nabla \left((\eps_n^k)^{\frac{2-N}{2}}W\(\f{x-y^k_n}{\eps_n^k}\)\right)\nabla v \mathrm{~d} x\\
 			&=-\int_{ \mathbb R^N}  (\eps_n^k)^{\frac{2-N}{2}}W\(\f{x-y^k_n}{\eps_n^k}\)  \De v \mathrm{~d} x\\
 			&=-\int_{ \mathbb R^N}  (\eps_n^k)^{\frac{2-N}{2}}W\(\f{x }{\eps_n^k}\)  \De v(x+y^k_n) \mathrm{~d} x.
 		\end{align*}
 		For any $\delta >0$, we split the above integral into two components as follows
 		\begin{align*}
 			\int_{ \mathbb R^N}  (\eps_n^k)^{\frac{2-N}{2}}W\(\f{x}{\eps_n^k}\)  \De v(x+y^k_n) \mathrm{~d} x&=\int_{ B(0,\delta)}  (\eps_n^k)^{\frac{2-N}{2}}W\(\f{x}{\eps_n^k}\)  \De v(x+y^k_n)\mathrm{~d} x \\
			&+\int_{  B(0,\delta)^c}  (\eps_n^k)^{\frac{2-N}{2}}W\(\f{x}{\eps_n^k}\)  \De v(x+y^k_n) \mathrm{~d} x,
   \end{align*}
 		\noi
 		  First, at infinity, we have
 		\begin{align*}
 			\int_{ B(0,\delta)^c}  (\eps_n^k)^{\frac{2-N}{2}}W\(\f{x}{\eps_n^k}\)  \De v(x+y^k_n)\mathrm{~d} x&\leq \norm{(\eps_n^k)^{\frac{2-N}{2}}W\(\f{x}{\eps_n^k}\)}_{L^\I(B(0,\delta)^c)}\int_{ \mathbb R^N}|\De v(x)| \mathrm{~d} x \\
 			&=\norm{(\eps_n^k)^{\frac{2-N}{2}}W\(\f{x-y^k_n}{\eps_n^k}\)}_{L^\I(B(y^k_n,\delta)^c)}\int_{ \mathbb R^N}|\De v(x)| \mathrm{~d} x\\
    &=o(1).
   \end{align*}
 		On the other hand,
 		\begin{align*}
 			&\int_{ B(0,\delta)}|  (\eps_n^k)^{\frac{2-N}{2}}W\(\f{x}{\eps_n^k}\)  \De v(x+y^k_n)| \mathrm{~d} x\\
    &\leq   \norm{ \De v(x) }_{L^\I(\mathbb R^N)}\int_{ B(0,\delta)}|  (\eps_n^k)^{\frac{2-N}{2}}W\(\f{x}{\eps_n^k}\) |\mathrm{~d} x\\
 			&= \norm{ \De v(x) }_{L^\I(\mathbb R^N)}\left(\int_{ B(0,\delta)}|  (\eps_n^k)^{\frac{2-N}{2}}W\(\f{x}{\eps_n^k}\) |^{2^*}\mathrm{~d} x\right)^\frac{1}{2^*}|B(0,\delta)|^{\frac{2^*-1}{2^*}}\\
 			&= \norm{ \De v(x) }_{L^\I(\mathbb R^N)}(\eps_n^k)^{\frac{2-N}{2}}\(\int_{ B(0,\delta)}|  W\(\f{x}{\eps_n^k}\) |^{2^*}\mathrm{~d} x\)^\frac{1}{2^*}\delta^{\frac{N+2}{2N}}\\
 			&	\leq \norm{ \De v(x) }_{L^\I(\mathbb R^N)} \norm{W}_{2^*}\delta^{\frac{N+2}{2N}}.
 		\end{align*}
   
   \noi
 		Finally, for any $\delta>0$, we have
 		\begin{align*}
 			&\limsup_{n\to\I}\int_{ \mathbb R^N}  (\eps_n^k)^{\frac{2-N}{2}}W\(\f{x}{\eps_n^k}\)  \De v(x+y^k_n)\mathrm{~d} x\\
    &=\int_{ B(0,\delta)}  (\eps_n^k)^{\frac{2-N}{2}}W\(\f{x}{\eps_n^k}\)  \De v(x+y^k_n)\mathrm{~d} x+\int_{  B(0,\delta)^c}  (\eps_n^k)^{\frac{2-N}{2}}W\(\f{x}{\eps_n^k}\)  \De v(x+y^k_n)\mathrm{~d} x\\
 			&\leq \norm{ |\De v(x)| }_{L^\I(\mathbb R^N)} \norm{W}_{2^*}\delta^{\frac{N+2}{2N}}.
 		\end{align*}
 		Thus $ \int_{ \mathbb R^N}  (\eps_n^k)^{\frac{2-N}{2}}W\(\f{x}{\eps_n^k}\)  \De v(x+y^k_n)\mathrm{~d} x=o(1)$.\\
 	\noi	
 		Hence 	$\langle (\eps_n^k)^{\frac{2-N}{2}}W\(\f{x-y^k_n}{\eps_n^k}\),v\rangle\to 0$. The result follows from the standard density arguments as the Schwartz class functions are dense in $ D^{1,2}(\mathbb R^N)$. \\

   \medskip

\noi   
The following statements are equivalent to $n\to\I$.
 		  
 \begin{equation}
 		\begin{aligned}
 		&(a)\,\,	\langle D_{\epsilon_n^k,y^k_n}W\, ,\,D_{\epsilon_n^l,y^l_n}V\rangle=\langle D_{\epsilon_n^k\epsilon_n^{-l}\,,\,\,\epsilon_n^{-l}(y^k_n-y^l_n)}W \,,\,V\rangle \to 0\\
 		&(b)\,\,\Big| \log\big(\frac{\eps^k_{n} }{\eps^l_{n}}\big)\Big|+ \Big| \frac{y_n^k-y^l_n}{\eps^l_{n}}\Big|\to \I \text{  as  } n\to \I \text{  for  } k\ne l.
 		\end{aligned}
 	\end{equation} 

Hence to prove $(2)$, it is enough to prove $(a)$. To show that, we use the properties of Dislocation space as in \cite{MR3216834}.\\
 Indeed, assuming in Theorem 8 of \cite{MR3216834}, $g_{n}^{(j)}= D_{y_{n}^j, \lambda_{n}^j} \in D$ and $D$ is defined as the space of Unitary operators as follows 
 \begin{equation*}
D:=\left\{D_{y, \lambda}: D_{y, \lambda} u(x):=\lambda^{\frac{2-N}{2}} u\left(\frac{x-y}{\lambda}\right)\;\; \forall u \in D^{1,2}(\mathbb{R}^N); \; y \in \mathbb{R}^N, \;\lambda>0\right\}.
\end{equation*}
To prove that $(D^{1,2}(\mathbb{R}^N), D )$ is a dislocation space, it is enough to show using proposition 3.1 of \cite{MR2294665} that 
$$D_{y_{n},\lambda_{n}} \in D, D_{y_{n},\lambda_{n}} \not\rightharpoonup 0 \implies D_{y_{n},\lambda_{n}} \text{has a strongly convergent subsequence}.$$ 
$D_{y_{n},\lambda_{n}} \rightharpoonup 0 $, i.e., there exist $v \in D^{1,2}(\mathbb{R}^N)$ such that $D_{y_{n},\lambda_{n}}v \not\rightharpoonup 0$. Then using lemma 3 of \cite{MR3216834}, we get $\lambda_n \rightarrow \lambda$ and $y_n \rightarrow y$ for some $\lambda >0, y \in \mathbb{R}^N.$ Then using equation (54) of \cite{MR3216834}, we conclude $D_{y_{n}, \lambda_{n}}\rightarrow D_{y, \lambda}$ .
Thus using Theorem 8 of \cite{MR3216834}, we get $D_{y_{n}^j, \lambda_{n}^j}^{-1} D_{y_{n}^k, \lambda_{n}^k} \rightharpoonup 0 \text{ for } k \neq j.$ Hence $(a)$ follows.
 \end{proof}

\medskip

\medskip

{\bf Acknowledgement:} The research of M.~Bhakta and A. K.~Sahoo are partially supported by the DST Swarnajaynti fellowship (SB/SJF/2021-22/09), D.~Ganguly is partially supported by the INSPIRE faculty fellowship (IFA17-MA98) and D.~Gupta is supported by the PMRF fellowship.

 \end{document}